\newtheorem{theorem}{Theorem}[section]
\newaliascnt{conj}{theorem}
\newaliascnt{cor}{theorem}
\newaliascnt{lemma}{theorem}
\newaliascnt{fact}{theorem}
\newaliascnt{claim}{theorem}
\newaliascnt{prop}{theorem}
\newaliascnt{definition}{theorem}
\newaliascnt{qn}{theorem}
\newaliascnt{assump}{theorem}
\newtheorem{conj}[conj]{Conjecture}
\newtheorem{cor}[cor]{Corollary}
\newtheorem{lemma}[lemma]{Lemma}
\newtheorem{prop}[prop]{Proposition}
\newtheorem{definition}[definition]{Definition}
\theoremstyle{definition}
\newaliascnt{example}{theorem}
\newtheorem{example}[example]{Example}
\theoremstyle{remark}
\newaliascnt{rmk}{theorem}
\newtheorem{remark}[rmk]{Remark}
\def\sek~{\S{}}
\DeclareMathOperator{\vspan}{span}
\DeclareMathOperator{\image}{Im}
\DeclareMathOperator{\sech}{sech}
\newcommand{\IsoTo}{\xrightarrow{\raisebox{0.01 mm}{\smash{\ensuremath{\sim}}}}{}}
\newcommand{\p}{\partial}
\newcommand{\F}{\mathcal{F}}
\newcommand{\G}{\mathcal{G}}
\newcommand{\U}{\mathcal{U}}
\newcommand{\RR}{\mathbb{R}}
\def\blfootnote{\xdef\@thefnmark{}\@footnotetext}
\begin{document}
\title[On Legendrian foliations in contact manifolds II]{On Legendrian foliations in contact manifolds II: Deformation theory}
\author{Yang Huang}

\blfootnote{Y.H was supported by the Center of Excellence Grant ``Centre for Quantum Geometry of Moduli Spaces'' from the Danish National Research Foundation (DNRF95). The author thanks E. Ghys and T. Tsuboi for helpful correspondence.}

\begin{abstract}
Using the structural theorems developed in \cite{H2013}, we study the deformation theory of coisotropic submanifolds in contact manifolds, under the assumption that the characteristic foliation is nonsingular. In the ``middle'' dimensions, we find an interesting relationship with foliation theory. Some elementary applications of contact geometry in foliation theory are explored.
\end{abstract}

\maketitle

\tableofcontents

In a previous work \cite{H2013}, we studied some basic properties of $(n+1)$-dimensional coisotropic submanifolds in $(2n+1)$-dimensional contact manifolds for any $n \geq 1$. In particular, we studied in detail the characteristic foliation $\F$, also known as the Legendrian foliation, on the coisotropic submanifold, which is singular in general. A particularly interesting question is to understand the singularity of $\F$. This was partially done in author's work \cite{H2013,H2014}. 

The current paper is, however, focused on nonsingular Legendrian foliations, and their coisotropic deformations. The deformation problem of coisotropic submanifolds is well-studied in the symplectic case by the work of Oh and Park \cite{OP2005}. In this case, the characteristic foliation, defined to be the kernel of the pre-symplectic form, is automatically nonsingular. Moreover, it is realized in \cite{OP2005} that the deformation theory is governed by an $L_\infty$-algebra, whose formal solution can be obtained as a formal power series. In the contact case, however, we will see in \autoref{subsec:foliation} and \autoref{subsec:gauge_equiv} that the deformation problem of coisotropic submanifolds $Y^{n+1} \subset (M^{2n+1},\xi)$ turns out to be equivalent to the deformation theory of the underlying codimension one foliation. The latter problem is much less algebraic from the author's point of view. Nevertheless, the deformation theory of foliations are studied by several authors, e.g., Hamilton \cite{Ham1978}, Heitsch \cite{He1975} and El Kacimi-Alaoui and Nicolau \cite{EKN1993}. In particular, we will see in \autoref{subsec:gauge_equiv} that the infinitesimal deformation of nonsingular Legendrian foliations is completely characterized by a twisted version of the usual tangential (de Rham) cohomology. This recovers a result Heitsch, which is formulated in a slightly different language.

The reason that we restrict ourself to the case of nonsingular Legendrian foliations is two-fold. First of all, unless we are working with surfaces in contact three-manifolds, it is currently not clear to author what a generic singularity should look like, and in fact, one may not want to work with generic singularities as their dynamics are usually complicated. Secondly and more surprisingly, since the deformation theory of nonsingular Legendrian foliations is equivalent to the deformation theory of codimension one foliations as mentioned above, there are powerful tools in contact geometry, such as Floer theory, which can be effectively applied to the study of foliations. However, in this paper, we will only carry out some rather elementary applications of contact geometry in foliation theory in \autoref{subsec:appl}.  More precisely, we will give a criterion for when a homotopy of foliations is an isotopy, and conjecture that our criterion is necessary and sufficient. The Floer-theoretic approach to foliation theory will be carried out in a separate work.

In the second half of this paper, we work out the deformation theory for general coisotropic submanifolds, again, with the assumption that the characteristic foliation is nonsingular. As in the Legendrian foliation case, it turns out to be closely related to, but not necessarily equivalent to, the deformation problem of the so-called pre-contact structures, which are natural intermediate objects between contact structures and codimension one foliations. See \autoref{subsec:precontact} for more details. We notice that even with the assumption that the characteristic foliation is nonsingular, the general case is still much more complicated than the case of Legendrian foliations. This is largely due to the more involved ``transverse geoemtry'' of the characteristic foliation. Also note that in a recent work of L\^e, Oh, Tortorella and Vitagliano \cite{LOTV}, the authors constructed an $L_\infty$ structure on pre-contact manifolds.

We conclude the paper by giving an explicit calculation of the twisted tangential cohomology for a one-dimensional foliation on the torus in the appendix. It turns out to be very different from the usual tangential cohomology, but so far we do not have an effective way to study such cohomology groups in general.

\section{Nonsingular Legendrian foliations} \label{sec:nonsing_Leg}

Let $(M^{2n+1},\xi)$ be a contact manifold with contact form $\xi=\ker\alpha$. Suppose $Y^{n+1} \subset (M^{2n+1},\xi)$ is a closed, orientable coisotropic $(n+1)$-submanifold with {\em characteristic foliation} $\F$, where $\F=\ker\lambda$ where $\lambda=\alpha|_Y$. We will also call such $\F$ a {\em Legendrian foliation} since the (nonsingular) leaves of $\F$ are Legendrian submanifolds of $M$. We will assume for the rest of this section that $\F$ is a nonsingular codimension one foliation. 

By the standard neighborhood theorem in \cite[Theorem 1.4]{H2013}, a sufficiently small neighborhood of $Y$ in $M$ is contactomorphic to a neighborhood of the zero section of the vector bundle
\begin{equation*}
	\pi: T^\ast \F \to Y,
\end{equation*}
where the fiber at any $y \in Y$ is defined to be $T_y^\ast \F$. Moreover if we fix a transverse line field $L \pitchfork \F$, then the total space of $T^\ast\F$ is equipped with a contact form
\begin{equation} \label{eqn:cntform_nonsing}
	\alpha=\pi^\ast \lambda-\eta,
\end{equation}
where $\eta$ is similar to the tautological one-form on the cotangent bundle, but in this case it depends on our choice of $L$. See \cite{H2013} or \autoref{sec:contact_MEQ} below for the precise definition. Following \cite{LOTV}, we will call this construction a {\em contact thickening}. The main question of interest here is that what is the moduli space of coisotropic submanifolds $C^1$-close\footnote{In fact, we only need the nearby coisotropic subamnifolds to be graphical, which is of course guaranteed by $C^1$-closeness.} to $Y$ in $(M,\xi)$. In the light of the above discussions on the standard contact neighborhood of $Y$, it suffices to determine all $C^1$-small sections $s: Y \to T^\ast\F$ such that the graph of $s$ is coisotropic.

\subsection{The contact master equation for nonsingular Legendrian foliations} \label{sec:contact_MEQ}
To start with, let us pick a foliated chart in $Y$ adapted to $\F$ as follows $$R^{n+1}_{x,q^1,\cdots,q^n} \cong \U \subset Y,$$ where $\U$ is an open neighborhood of some point in $Y$, $(x,q^1,\cdots,q^n)$ are the coordinates on $\U$ and the leaves of $\F$ are identified with $\{x=\text{const}\}$. To write down the contact form (\ref{eqn:cntform_nonsing}) in local coordinates, let $(p_1,\cdots,p_n)$ be the dual coordinates on the fiber of $T^\ast \F$, and let $\lambda=fdx$ for some $f > 0\in C^\infty(\U)$.

In the following we will always have the Einstein's summation convention turned on, i.e., the same upper and lower indices are to be summed up. To define $\eta$, we write the transverse line field $L$ in coordinates as follows
\begin{equation*}
	L=\langle \p_x+R^i\p_{q^i} \rangle,
\end{equation*}
where $R^i\in C^\infty(\U)$. Then we define 
\begin{equation*}
	\eta := -R^ip_idx+p_idq^i 
\end{equation*}
and therefore (\ref{eqn:cntform_nonsing}) can be written as $\alpha=(f+R^ip_i)dx-p_idq^i$. For later use, let us compute
\begin{equation*}
	d\alpha=(\frac{\p f}{\p q^j}+p_k\frac{\p R^k}{\p q^j})dq^j \wedge dx+R^i dp_i \wedge dx-dp_i \wedge dq^i,
\end{equation*}
and so
\begin{IEEEeqnarray*}{rCl}
	\alpha\wedge d\alpha &=& -(f+R^kp_k)dx \wedge dp_i \wedge dq^i-\sum_{i \neq j} p_i(\frac{\p f}{\p q^j}+p_k\frac{\p R^k}{\p q^j})dx \wedge dq^i \wedge dq^j \nonumber\\
					   && +\: p_jR^idx \wedge dp_i \wedge dq^j+p_idq^i \wedge dp_j \wedge dq^j \\
					   &=& -(f+R^kp_k)dx \wedge dp_i \wedge dq^i-\sum_{i<j}\Big(p_i(\frac{\p f}{\p q^j}+p_k\frac{\p R^k}{\p q^j})-p_j(\frac{\p f}{\p q^i} \nonumber\\
					   && +\: p_k\frac{\p R^k}{\p q^i})\Big)dx \wedge dq^i \wedge dq^j+p_jR^idx \wedge dp_i \wedge dq^j+p_idq^i \wedge dp_j \wedge dq^j. 
\end{IEEEeqnarray*}

For any section $s: Y \to T^\ast\F$, denote by $Y_s$ the graph of $s$. Using local coordinates as above, we can write $s$ as a set of functions $p_i=s_i(x,q^1,\cdots, q^n)$. Then it is easy to see that
\begin{equation*}
	TY_s=\langle \p_x+\frac{\p s_i}{\p x}\p_{p_i}, \p_{q^1}+\frac{\p s_i}{\p q^1}\p_{p_i}, \cdots, \p_{q^n}+\frac{\p s_i}{\p q^n}\p_{p_i} \rangle.
\end{equation*}

Now the condition of $Y_s$ being coisotropic reads
\begin{equation*} \label{eqn:coiso_cond}
	\alpha\wedge d\alpha|_{TY_s}=0.
\end{equation*}
For simplicity of notations, let us write $v_0=\p_x+\frac{\p s_i}{\p x}\p_{p_i}$ and $v_k=\p_{q^k}+\frac{\p s_i}{\p q^k}\p_{p_i}$ for $1 \leq k \leq n$. Then we have for any $1 \leq a<b \leq n$,
\begin{IEEEeqnarray*}{rCl} \label{eqn:v_0ab}
	\alpha \wedge d\alpha(v_0, v_a,v_b) &=& -(f+R^k s_k)(\frac{\p s_b}{\p q^a}-\frac{\p s_a}{\p q^b})-s_a(\frac{\p f}{\p q^b}+s_k\frac{\p R^k}{\p q^b})+s_b(\frac{\p f}{\p q^a}+s_k\frac{\p R^k}{\p q^a}) \nonumber\\
		&& +\: s_bR^i\frac{\p s_i}{\p q^a}-s_aR^i\frac{\p s_i}{\p q^b}+s_b\frac{\p s_a}{\p x}-s_a\frac{\p s_b}{\p x} \nonumber\\
		&=& \Big( -f(\frac{\p s_b}{\p q^a}-\frac{\p s_a}{\p q^b})-s_a\frac{\p f}{\p q_b}+s_b\frac{\p f}{\p q^a} \Big)+\Big( s_b\frac{\p (s_kR^k)}{\p q^a}-s_k R^k\frac{\p s_b}{\p q^a}+s_b\frac{\p s_a}{\p x} \Big) \nonumber\\
		&& -\: \Big( s_a\frac{\p (s_k R^k)}{\p q^b}-s_k R^k\frac{\p s_a}{\p q^b}+s_a\frac{\p s_b}{\p x} \Big),
\end{IEEEeqnarray*}
and for any $1 \leq a < b < c \leq n$,
\begin{equation*}
\alpha \wedge d\alpha(v_a,v_b,v_c)=-s_a\frac{\p s_b}{\p q^c}-s_b\frac{\p s_c}{\p q^a}-s_c\frac{\p s_a}{\p q^b}+s_a\frac{\p s_c}{\p q^b}+s_b\frac{\p s_a}{\p q^c}+s_c\frac{\p s_b}{\p q^a}.
\end{equation*}

To summarize the above calculations, we have proved the following result.

\begin{prop} \label{thm:master}
The graph of a section $s: Y \to T^\ast\F$ is coisotropic with respect to $\alpha$ if and only if the following two equations hold.
	\begin{equation} \label{eqn:master1}
		f(\frac{\p s_a}{\p q^b}-\frac{\p s_b}{\p q^a})-s_a\frac{\p f}{\p p_b}+s_b\frac{\p f}{\p q^a} = s_a\frac{\p (s_kR^k)}{\p q^b}-s_kR^k\frac{\p s_a}{\p q^b}+s_a\frac{\p s_b}{\p x} - \Big( s_b\frac{\p (s_kR^k)}{\p q^a}-s_kR^k\frac{\p s_b}{\p q^a}+s_b\frac{\p s_a}{\p x} \Big)
	\end{equation}
for any $1 \leq a<b \leq n$, and
	\begin{equation} \label{eqn:master2}
		-s_a\frac{\p s_b}{\p q^c}-s_b\frac{\p s_c}{\p q^a}-s_c\frac{\p s_a}{\p q^b}+s_a\frac{\p s_c}{\p q^b}+s_b\frac{\p s_a}{\p q^c}+s_c\frac{\p s_b}{\p q^a}=0
	\end{equation}
for any $1 \leq a<b < c \leq n$.
\end{prop}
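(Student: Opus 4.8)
The plan is to turn the coisotropy condition $\alpha\wedge d\alpha|_{TY_s}=0$ into the finite list of scalar equations \eqref{eqn:master1}--\eqref{eqn:master2} by testing the ambient $3$-form $\alpha\wedge d\alpha$ against the frame $\{v_0,v_1,\dots,v_n\}$ of $TY_s$ recorded above. Since $\alpha\wedge d\alpha$ is alternating and $TY_s$ has rank $n+1$, its restriction to $TY_s$ vanishes if and only if $\alpha\wedge d\alpha(v_{i_0},v_{i_1},v_{i_2})=0$ for every index triple, and because $v_0$ is the only frame vector involving $\partial_x$ the triples split naturally into those of the form $(v_0,v_a,v_b)$ with $1\le a<b\le n$ and those of the form $(v_a,v_b,v_c)$ with $1\le a<b<c\le n$. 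This reduction is pure multilinear algebra and requires no globalization, since coisotropy is checked pointwise in the contact thickening.

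Next I would carry out the contraction. Using the coordinate expression for $\alpha\wedge d\alpha$ derived just above, evaluate each decomposable summand $dx\wedge dp_i\wedge dq^i$, $dx\wedge dq^i\wedge dq^j$, $dx\wedge dp_i\wedge dq^j$ and $dq^i\wedge dp_j\wedge dq^j$ on a given triple; each such evaluation is a $3\times 3$ determinant in the pairings $\langle dx,v_0\rangle=1$, $\langle dx,v_k\rangle=0$, $\langle dq^j,v_0\rangle=0$, $\langle dq^j,v_k\rangle=\delta^j_k$, $\langle dp_i,v_0\rangle=\partial s_i/\partial x$, $\langle dp_i,v_k\rangle=\partial s_i/\partial q^k$, and one substitutes $p_i=s_i$ in the coefficients since we are restricting to $Y_s$. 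For $(v_0,v_a,v_b)$ this produces exactly the displayed expression for $\alpha\wedge d\alpha(v_0,v_a,v_b)$; regrouping the terms $s_b\,\partial(s_kR^k)/\partial q^a-s_kR^k\,\partial s_b/\partial q^a$ (and its $a\leftrightarrow b$ analogue) by the Leibniz rule and transposing them across the equality gives \eqref{eqn:master1}. For $(v_a,v_b,v_c)$ every summand of $\alpha\wedge d\alpha$ except the last contains a $dx$ factor and hence annihilates a triple built from $v_a,v_b,v_c$, so only $p_i\,dq^i\wedge dp_j\wedge dq^j$ contributes, and its determinant expansion is precisely the left-hand side of \eqref{eqn:master2}.

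The only place where care is genuinely required is the sign and index bookkeeping: one must handle the antisymmetrization in $\alpha\wedge d\alpha$ consistently, correctly fold the sum $\sum_{i\neq j}$ into $\sum_{i<j}$ with the appropriate sign, and track the fact that substituting $p_i\mapsto s_i$ does not commute with $\partial/\partial q^j$ but generates the extra nonlinear terms $s_k R^k\,\partial s_b/\partial q^a$, $s_b\,\partial s_a/\partial x$, etc. on the right-hand side of \eqref{eqn:master1}; these are exactly the terms that distinguish the contact master equation from the linear (pre-symplectic) one. Once this is done consistently, collecting the two families of identities yields the equivalence claimed in the Proposition.
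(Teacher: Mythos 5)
Your proposal is correct and follows essentially the same route as the paper: the paper likewise expands $\alpha\wedge d\alpha$ in the foliated coordinates, evaluates it on the frame $v_0,v_1,\dots,v_n$ of $TY_s$ split into the triples $(v_0,v_a,v_b)$ and $(v_a,v_b,v_c)$, and reads off \eqref{eqn:master1} and \eqref{eqn:master2} after regrouping by the Leibniz rule. No substantive difference to report.
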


We will call (\ref{eqn:master1}) and (\ref{eqn:master2}) the {\em contact master equations} for Legendrian foliations since they govern, at least locally, the deformation of coisotropic submanifolds. In fact we will work out the more general contact master equations for coisotropic submanifolds of any dimension in \autoref{sec:generalcoiso}.

Our next goal is to get an invariant understanding of (\ref{eqn:master1}) and (\ref{eqn:master2}). In order to do this, we need some preparations on {\em tangential de Rham theory}, which is well-known in foliation theory. Given a foliation $\F$, we define
\begin{equation*}
	\Omega^\bullet(\F)=\Gamma(\wedge^\bullet T^\ast\F)
\end{equation*}
to be the complex of differential forms on $\F$, where the differential
\begin{equation*}
	d_\F: \Omega^\bullet(\F) \to \Omega^{\bullet+1}(\F)
\end{equation*}
is defined by Cartan's formula as follows.
\begin{IEEEeqnarray*}{rCl}
	d_\F\omega(X_1,\cdots,X_{k+1}) &=& \sum_i (-1)^{i+1}X_i(\omega(X_1,\cdots,\widehat{X_i},\cdots,X_{k+1})) \\
		&& +\: \sum_{i<j} (-1)^{i+j}\omega([X_i,X_j],X_1,\cdots,\widehat{X_i},\cdots,\widehat{X_j},\cdots,X_{k+1}),
\end{IEEEeqnarray*}
for any $\omega\in \Omega^k(\F)$ and any vector fields $X_1,\cdots,X_{k+1}$ tangent to $\F$. Here $\widehat{X_i}$ means the term with $X_i$ is omitted. It is easy to see that $d_\F$ is well-defined since $\F$ is integrable. We now define the {\em tangential de Rham cohomology} $$H^\bullet(\F) := H^\bullet(\Omega^\bullet(\F),d_\F).$$

Let $L$ be a line field transverse to $\F$ as before. We define an inclusion map $i_L: \Omega^\bullet(\F) \to \Omega^\bullet(Y)$ by requiring $i_L(\omega)$ to be equal to $\omega$ in the $\F$-directions and to vanish in the $L$-direction, for any $\omega \in \Omega^\bullet(\F)$. We will write down a local expression of $i_L(\omega)$ in coordinates introduced above for $\omega \in \Omega^1(\F)$, as it is enough for our later purposes. Namely, let $\omega=a_i dq^i \in \Omega^1(\F)$, we have
\begin{equation} \label{eqn:bar}
	\overline\omega := i_L(\omega)=-R^j a_j dx+a_i dq^i \in \Omega^1(Y).
\end{equation}
We also have a natural projection map $\pi: \Omega^\bullet(Y) \to \Omega^\bullet(\F)$ simply by restriction. Of course we have $\pi(\overline\omega)=\omega$ for any $\omega \in \Omega^\bullet(\F)$. But $i_L \circ \pi$ is in general not the identity map. Nevertheless we have $\overline{\pi(\sigma)}=\sigma$ for any $\sigma \in \Omega^\bullet(Y)$ such that $\sigma(L)=0$.

Now we can reformulate \autoref{thm:master} in a coordinate-free manner as follows.

\begin{theorem}
The graph of a section $s: Y \to T^\ast\F$, viewed as a one-form $s \in \Omega^1(\F)$, is coisotropic with respect to $\alpha$ if and only if the following two equations hold.
	\begin{align}
		\overline{s} \wedge d \overline{s} &= \overline{s} \wedge d\lambda +d\overline{s} \wedge \lambda  \hspace{3mm} \text{on} \hspace{2mm}  \Omega^3(Y), ~~~\text{and}\label{eqn:supfol}\\
		s \wedge d_\F s &= 0 \hspace{3mm} \text{on} \hspace{2mm} \Omega^3(\F),  \label{eqn:subfol}
	\end{align}
where $\overline{s}$ is defined by (\ref{eqn:bar}).
\end{theorem}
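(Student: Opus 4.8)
The plan is to translate the two local equations \eqref{eqn:master1} and \eqref{eqn:master2} from \autoref{thm:master} into the invariant language of differential forms on $Y$ and on $\F$. Everything is local, so it suffices to verify the claimed identities in a single foliated chart $(x,q^1,\dots,q^n)$ with $\lambda=f\,dx$ and $L=\langle\p_x+R^i\p_{q^i}\rangle$; since both sides of \eqref{eqn:supfol} and \eqref{eqn:subfol} transform tensorially, chart-by-chart verification gives the global statement. The section $s$ with components $p_i=s_i(x,q)$ is read as the tangential one-form $s=s_i\,dq^i\in\Omega^1(\F)$, and its image $\overline{s}=i_L(s)$ is $-R^js_j\,dx+s_i\,dq^i$ by \eqref{eqn:bar}.

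First I would compute $\overline{s}\wedge d\overline{s}$, $\overline{s}\wedge d\lambda$, and $d\overline{s}\wedge\lambda$ explicitly in these coordinates, collecting the coefficients of the basis $3$-forms $dx\wedge dq^a\wedge dq^b$ (for $a<b$) and $dq^a\wedge dq^b\wedge dq^c$ (for $a<b<c$). The $dx\wedge dq^a\wedge dq^b$-component of the equation $\overline{s}\wedge d\overline{s}=\overline{s}\wedge d\lambda+d\overline{s}\wedge\lambda$ should reproduce, after expanding $\frac{\p(s_kR^k)}{\p q^a}$ by the Leibniz rule exactly as in the last displayed computation of $\alpha\wedge d\alpha(v_0,v_a,v_b)$ before \autoref{thm:master}, precisely equation \eqref{eqn:master1}; here the terms $\overline{s}\wedge d\lambda$ and $d\overline{s}\wedge\lambda$ on the right are what produce the $f$-derivative terms $s_b\frac{\p f}{\p q^a}-s_a\frac{\p f}{\p q^b}$ and the $f(\frac{\p s_a}{\p q^b}-\frac{\p s_b}{\p q^a})$ term on the left of \eqref{eqn:master1}, while the $x$-derivative and $R$-terms come from $\overline{s}\wedge d\overline{s}$. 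Next, the $dq^a\wedge dq^b\wedge dq^c$-component: on the right-hand side $\overline{s}\wedge d\lambda$ and $d\overline{s}\wedge\lambda$ contain no $dq^a\wedge dq^b\wedge dq^c$ term since $\lambda=f\,dx$ carries a $dx$, so \eqref{eqn:supfol} in that component says the $dq$-part of $\overline{s}\wedge d\overline{s}$ vanishes; I would then observe that this $dq$-part equals $s\wedge d_\F s$, so it coincides with \eqref{eqn:master2}, i.e. with \eqref{eqn:subfol}. This also shows \eqref{eqn:subfol} is not an independent extra condition but rather the ``purely tangential part'' that one separates out; it is convenient to state it separately because $s\wedge d_\F s$ is intrinsically defined on $\F$.

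Conversely, I would check that \eqref{eqn:supfol} together with \eqref{eqn:subfol} imply both \eqref{eqn:master1} and \eqref{eqn:master2}: by the above, \eqref{eqn:subfol} is \eqref{eqn:master2}, and the $dx\wedge dq^a\wedge dq^b$-component of \eqref{eqn:supfol} is \eqref{eqn:master1}, so by \autoref{thm:master} the graph $Y_s$ is coisotropic. The main obstacle I anticipate is purely bookkeeping: matching signs and index ranges between the antisymmetrized $\alpha\wedge d\alpha$ expressions (which are written with $\sum_{i<j}$ conventions and an explicit $dx\wedge dp_i\wedge dq^i$ term) and the wedge-product expansion of $\overline{s}\wedge d\overline{s}$, keeping careful track of which $\frac{\p s_i}{\p q^j}$ terms are symmetric versus antisymmetric in $i,j$. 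In particular one should double-check that the seemingly asymmetric right-hand side of \eqref{eqn:master1} — written as a difference of an $a$-term and a $b$-term — is exactly the coefficient extracted from $d\overline{s}$, using $d\overline{s}=\big(\frac{\p s_i}{\p q^j}-\frac{\p s_j}{\p q^i}\big)$-type combinations together with the $\p_x$ and $R^i$ contributions; a clean way to organize this is to note $\overline{s}=s_i\,dq^i-R^js_j\,dx$ and compute $d\overline{s}$ and the wedge product once and for all, then read off the two components. No deep idea is needed beyond \autoref{thm:master}; the content of the theorem is the recognition that the coisotropy condition packages into the single ambient identity \eqref{eqn:supfol} (which, as the notation $\text{supfol}$ suggests, is a ``supplementary foliation''-type equation relating $\overline{s}$ to $\lambda$) plus its tangential shadow \eqref{eqn:subfol}.
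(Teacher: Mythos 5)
Your proposal is correct and follows essentially the same route as the paper: work in a foliated chart, expand $\overline{s}\wedge d\overline{s}$, $\overline{s}\wedge d\lambda$ and $d\overline{s}\wedge\lambda$, and match the $dx\wedge dq^a\wedge dq^b$ and $dq^a\wedge dq^b\wedge dq^c$ components against (\ref{eqn:master1}) and (\ref{eqn:master2}) of \autoref{thm:master}. Your additional observation that (\ref{eqn:subfol}) is just the purely tangential component of (\ref{eqn:supfol}) (since $\lambda$ and $d\lambda$ both vanish when restricted to $\F$) is correct and slightly sharper than the paper's bookkeeping, which instead uses (\ref{eqn:subfol}) to discard the $dq^a\wedge dq^b\wedge dq^c$ part of $\overline{s}\wedge d\overline{s}$ before comparing with $\overline{s}\wedge d\lambda+d\overline{s}\wedge\lambda$.
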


\begin{proof}
Using a local foliated chart $\U \cong \RR^{n+1}_{x,q^1,\cdots,q^n}$ as before, we write $s=s_i dq^i$, where each $s_i$ is a smooth function on $\U$. Then (\ref{eqn:subfol}) is equivalent to the following.
	\begin{equation} \label{eqn:subfol_calc}
		s \wedge d_\F s = s_i dq^i \wedge \Big( \frac{\p s_k}{\p q^j} dq^j \wedge dq^k \Big) = s_i\frac{\p s_k}{\p q^j} dq^i \wedge dq^j \wedge dq^k=0,
	\end{equation}
which is precisely (\ref{eqn:master2}). Next we compute
	\begin{align}
		\overline{s} \wedge d \overline{s} &= \Big( -R^j s_j dx+s_i dq^i \Big) \wedge \Big( \frac{\p (R^k s_k)}{\p q^j} dx \wedge dq^j+\frac{\p s_i}{\p x} dx \wedge dq^i+\frac{\p s_j}{\p q^i} dq^i \wedge dq^j \Big) \nonumber\\
						     &= \Big( R^k s_k \frac{\p s_i}{\p q^j}-s_i\frac{\p (R^k s_k)}{\p q^j}+s_j \frac{\p s_i}{\p x} \Big)dx \wedge dq^i \wedge dq^j+s_i \frac{\p s_j}{\p q^k} dq^i \wedge dq^j \wedge dq^k \nonumber\\
						     &= \Big( R^k s_k \frac{\p s_i}{\p q^j}-s_i\frac{\p (R^k s_k)}{\p q^j}+s_j \frac{\p s_i}{\p x} \Big)dx \wedge dq^i \wedge dq^j, \label{eqn:master_calc1}
	\end{align}
where the last equality follows from (\ref{eqn:subfol_calc}). Finally we compute
	\begin{equation} \label{eqn:master_calc2}
		\overline{s} \wedge d\lambda+d\overline{s} \wedge \lambda= (f \frac{\p s_j}{\p q^i}-s_j\frac{\p f}{\p q^i}) dx \wedge dq^i \wedge dq^j,
	\end{equation}
where $\lambda=f dx$ as before. Now one immediately sees that equating (\ref{eqn:master_calc1}) and (\ref{eqn:master_calc2}) is equivalent to (\ref{eqn:master1}). This finishes the proof.
\end{proof}

We will also refer to (\ref{eqn:supfol}) and (\ref{eqn:subfol}) as the contact master equations.

\subsection{Infinitesimal deformation of Legendrian foliations} \label{sec:leg_infi}
As the first step towards understanding the geometric meaning of (\ref{eqn:supfol}) and (\ref{eqn:subfol}), we will first look at its linearized version, which precisely characterizes the infinitesimal deformations of the Legendrian foliation.

Let us start with a general discussion of a twisted version of the tangential de Rham cohomology associated to a codimension one foliation. More discussions of the distinction between the usual tangential de Rham theory and the twisted version will be carried out in \autoref{apx:twisted_de_rham}. To the best knowledge of the author, this version of tangential cohomology is new to the literature. In fact we will generalize the construction in this section to a class of higher codimension foliations in \autoref{sec:coiso_infi}.

Consider a nonsingular codimension one foliation $\F$ on $Y$, and let $\lambda$ be a defining one-form such that $\F=\ker\lambda$. By Frobenius theorem, we have $\lambda \wedge d\lambda=0$, which in turn implies that there exists $\mu \in \Omega^1(Y)$ such that $d\lambda=\mu \wedge \lambda$. Although $\mu$ is not uniquely determined by $\lambda$, it is easy to see that $\pi(\mu) \in \Omega^1(\F)$ is uniquely determined by $\lambda$. The following lemma is well-known in foliation theory.

\begin{lemma} \label{lem:mu_closed}
The tangential one-form $\pi(\mu)$ is closed, i.e., $d_\F(\pi(\mu))=0$.
\end{lemma}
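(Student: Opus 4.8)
The plan is to differentiate the defining relation $d\lambda = \mu\wedge\lambda$ and then restrict everything to the leaves of $\F$. First I would apply $d$ to both sides. Since $d^2=0$ and $\mu\wedge\mu=0$, this yields
\[
	0 = d(d\lambda) = d\mu\wedge\lambda - \mu\wedge d\lambda = d\mu\wedge\lambda - \mu\wedge\mu\wedge\lambda = d\mu\wedge\lambda,
\]
so $d\mu\wedge\lambda = 0$. Because $\lambda$ is nowhere vanishing and $\F$ has codimension one, every form annihilated by $\,\cdot\wedge\lambda$ is divisible by $\lambda$; hence there exists $\nu\in\Omega^1(Y)$ with $d\mu = \nu\wedge\lambda$.

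Next I would invoke the elementary fact that restriction to the leaves intertwines the two exterior derivatives, i.e.\ $\pi\circ d = d_\F\circ\pi$ on $\Omega^\bullet(Y)$. This is immediate from Cartan's formula defining $d_\F$: evaluating on vector fields $X_1,\dots,X_{k+1}$ tangent to $\F$, the only potentially problematic terms are those involving the brackets $[X_i,X_j]$, and these remain tangent to $\F$ by integrability, so each term of $d_\F(\pi\omega)$ agrees with the corresponding term of $\pi(d\omega)$. Applying $\pi$ to $d\mu = \nu\wedge\lambda$ then gives
\[
	d_\F\bigl(\pi(\mu)\bigr) = \pi(d\mu) = \pi(\nu)\wedge\pi(\lambda) = 0,
\]
since $\pi(\lambda) = \lambda|_{T\F} = 0$ by the very definition $\F = \ker\lambda$. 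This proves the lemma.

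The argument is short, and the only places that warrant a word of care are the ``division'' step, which uses that $\lambda$ is a nowhere-vanishing one-form cutting out a codimension-one distribution, and the commutation $\pi\circ d = d_\F\circ\pi$, which rests on the integrability of $\F$. I do not anticipate any genuine obstacle here; the content of the lemma is essentially the simplest, codimension-one, incarnation of Bott's vanishing phenomenon, and I would present it in roughly the three lines above.
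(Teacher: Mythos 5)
Your proof is correct, but it takes a genuinely different route from the paper. The paper works in a foliated chart where $\lambda = f\,dx$, computes $d\lambda = \frac{1}{f}\frac{\partial f}{\partial q^i}dq^i\wedge\lambda$, and reads off that $\pi(\mu) = d_\F(\log f)$ locally, so that closedness follows from $d_\F^2=0$. You instead argue invariantly: applying $d$ to $d\lambda=\mu\wedge\lambda$ gives $d\mu\wedge\lambda=0$, the division lemma for the nowhere-vanishing one-form $\lambda$ gives $d\mu=\nu\wedge\lambda$, and then $\pi\circ d = d_\F\circ\pi$ (which does require the integrability of $\F$, exactly as you note) together with $\pi(\lambda)=0$ yields $d_\F(\pi(\mu))=\pi(\nu)\wedge\pi(\lambda)=0$. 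All steps check out; in fact you could even bypass the division step by evaluating $d\mu\wedge\lambda=0$ on $(X,Y,v)$ with $X,Y$ tangent to $\F$ and $\lambda(v)=1$, which directly gives $d\mu(X,Y)=0$. What each approach buys: your argument is coordinate-free and is the standard computation underlying Bott vanishing and the Godbillon--Vey construction, so it scales well conceptually; the paper's local computation yields the sharper statement that $\pi(\mu)$ is leafwise \emph{exact} on each chart with explicit primitive $\log f$, a formula that is reused (e.g.\ in the proof that $H^\bullet_{tw}(\F)$ is independent of $\lambda$, where rescaling $\lambda'=g\lambda$ shifts $\pi(\mu)$ by $d_\F(\log g)$).
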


\begin{proof}
The proof follows from a local calculation. Choosing a foliated chart $R^{n+1}_{x,q^1,\cdots,q^n} \cong \U \subset Y$ as before such that locally $$\lambda=fdx \hspace{5mm} \text{for some} \hspace{2mm} f>0 \in C^\infty (\U). $$ Then we compute
\begin{equation*}
	d\lambda= \frac{\p f}{\p q^i}dq^i \wedge dx=\frac{1}{f} \frac{\p f}{\p q^i}dq^i \wedge \lambda.
\end{equation*}
Therefore in a coordinate chart we have $$\pi(\mu)=\frac{1}{f} \frac{\p f}{\p q^i}dq^i=d_\F (\log f) \in \Omega^1(\F).$$ This proves the conclusion in the lemma since $d_\F$ is a differential.
\end{proof}

\begin{definition}
The {\em twisted tangential differential} $d_\F^\lambda$ on $\Omega^\bullet(\F)$ is defined by $$d_\F^\lambda(\omega)=d_\F \omega-\pi(\mu)\wedge\omega$$ for any $\omega \in \Omega^\bullet(\F)$. The {\em twisted tangential (de Rham) cohomology} $H_{tw}^\bullet(\F)$ is defined to be $H^\bullet(\Omega^\bullet(\F),d_\F^\lambda)$.
\end{definition}

The twisted tangential differential and the twisted tangential cohomology are well-defined by the following two lemmas.

\begin{lemma}
$d_\F^\lambda \circ d_\F^\lambda=0$.
\end{lemma}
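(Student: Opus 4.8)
The plan is to verify the identity $d_\F^\lambda \circ d_\F^\lambda = 0$ by a direct computation that reduces to the two facts already available: that $d_\F$ is a differential (so $d_\F \circ d_\F = 0$) and that $\pi(\mu)$ is $d_\F$-closed (\autoref{lem:mu_closed}). First I would fix notation: write $\nu := \pi(\mu) \in \Omega^1(\F)$, so that $d_\F^\lambda \omega = d_\F\omega - \nu \wedge \omega$ for any $\omega \in \Omega^\bullet(\F)$, and recall that $\nu$ satisfies $d_\F \nu = 0$.

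The core of the argument is the following unwinding. For $\omega \in \Omega^k(\F)$,
\begin{IEEEeqnarray*}{rCl}
d_\F^\lambda(d_\F^\lambda \omega) &=& d_\F(d_\F \omega - \nu \wedge \omega) - \nu \wedge (d_\F\omega - \nu \wedge \omega) \\
&=& d_\F d_\F \omega - d_\F(\nu \wedge \omega) - \nu \wedge d_\F\omega + \nu \wedge \nu \wedge \omega.
\end{IEEEeqnarray*}
Now $d_\F d_\F \omega = 0$ since $d_\F$ is a differential; $\nu \wedge \nu = 0$ because $\nu$ is a one-form and the wedge product is graded-commutative; and by the Leibniz rule for $d_\F$,
\begin{equation*}
d_\F(\nu \wedge \omega) = (d_\F \nu) \wedge \omega - \nu \wedge d_\F\omega = -\nu \wedge d_\F\omega,
\end{equation*}
using $d_\F \nu = 0$ from \autoref{lem:mu_closed}. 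Substituting these three observations into the displayed expansion, all four terms cancel in pairs: $d_\F(\nu\wedge\omega)$ contributes $+\nu \wedge d_\F\omega$, which cancels the explicit $-\nu\wedge d_\F\omega$. Hence $d_\F^\lambda(d_\F^\lambda\omega) = 0$.

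I do not anticipate any serious obstacle here; the only point requiring care is the sign in the Leibniz rule for $d_\F$ acting on $\nu \wedge \omega$ (since $\nu$ has odd degree one), and confirming that $d_\F$ genuinely satisfies the graded Leibniz rule — this is standard for the Cartan-formula differential on a foliation and can be taken for granted from the setup. One should also note that the argument uses \autoref{lem:mu_closed} in an essential way: without $d_\F \nu = 0$ one would be left with a residual term $-(d_\F\nu) \wedge \omega$, which is exactly the obstruction to $d_\F^\lambda$ squaring to zero. So the proof is really just the observation that closedness of $\pi(\mu)$ is precisely what makes the twist consistent.
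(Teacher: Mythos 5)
Your proof is correct and is essentially identical to the one in the paper: both expand $d_\F^\lambda\circ d_\F^\lambda$, use $d_\F^2=0$, the graded Leibniz rule, $\pi(\mu)\wedge\pi(\mu)=0$, and the closedness of $\pi(\mu)$ from \autoref{lem:mu_closed} to see that everything cancels. Your remark that $d_\F\pi(\mu)=0$ is exactly the obstruction to the twisted differential squaring to zero is a nice observation, but the argument itself matches the paper's.
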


\begin{proof}
It follows from the following straightforward calculation. For any $\omega \in \Omega^\bullet(\F)$,
	\begin{align*}
		d_\F^\lambda \circ d_\F^\lambda(\omega) &= d_\F^\lambda(d_\F \omega-\pi(\mu)\wedge\omega) \\
			&= d_\F^2 \omega-d_\F(\pi(\mu)\wedge\omega)-\pi(\mu)\wedge(d_\F \omega)+\pi(\mu)\wedge\pi(\mu)\wedge\omega \\
			&=-d_\F(\pi(\mu))\wedge\omega=0.
	\end{align*}
Here we used the fact that $\pi(\mu)$ is closed by \autoref{lem:mu_closed}.
\end{proof}

\begin{lemma} \label{lem:well-defined-tangential}
The isomorphism class of $H^\bullet_{tw}(\F)$ does not depend on the choice the defining one-form $\lambda$, which justifies the notation.
\end{lemma}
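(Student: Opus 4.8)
The plan is to show that changing the defining one-form induces an explicit isomorphism of cochain complexes, which then descends to an isomorphism on cohomology.

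First I would compare the two defining data. Any two defining one-forms $\lambda,\lambda'$ for $\F$ satisfy $\lambda'=g\lambda$ for some nowhere-vanishing $g\in C^\infty(Y)$, since they have the same kernel. From $d\lambda=\mu\wedge\lambda$ one gets $d\lambda'=dg\wedge\lambda+g\,d\lambda=\big(\tfrac{dg}{g}+\mu\big)\wedge\lambda'$, so $\mu':=\mu+\tfrac{dg}{g}$ is an admissible primitive one-form for $\lambda'$. Since, as noted before \autoref{lem:mu_closed}, only $\pi(\mu)$ enters the construction and it depends only on the defining form (the remaining ambiguity in $\mu$ is a multiple of $\lambda$, which vanishes under $\pi$), this yields the key identity $\pi(\mu')=\pi(\mu)+\tfrac{d_\F g}{g}$, using $\pi(dg)=d_\F g$ for the $0$-form $g$.

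Next I would introduce the twisting operator $\Phi\colon\Omega^\bullet(\F)\to\Omega^\bullet(\F)$, $\Phi(\omega)=g\,\omega$. Because $g$ is nowhere zero, $\Phi$ is a degree-preserving isomorphism of graded modules with inverse $\omega\mapsto g^{-1}\omega$, and the only remaining point is that it intertwines the two twisted differentials. This is a one-line Leibniz computation: for $\omega\in\Omega^\bullet(\F)$, $d_\F^{\lambda'}(g\omega)=d_\F g\wedge\omega+g\,d_\F\omega-g\big(\pi(\mu)+\tfrac{d_\F g}{g}\big)\wedge\omega=g\big(d_\F\omega-\pi(\mu)\wedge\omega\big)=\Phi\big(d_\F^{\lambda}\omega\big)$, the two copies of $d_\F g\wedge\omega$ cancelling. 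Hence $\Phi$ is an isomorphism of cochain complexes $(\Omega^\bullet(\F),d_\F^{\lambda})\to(\Omega^\bullet(\F),d_\F^{\lambda'})$, so it induces an isomorphism on $H^\bullet_{tw}(\F)$; since any two defining forms are related as above, the lemma follows.

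There is essentially no hard analytic content here — the care is all bookkeeping. The two points I would be most careful about are: (i) that $\pi(\mu)$ is genuinely well-defined for fixed $\lambda$, which the text has already settled; and (ii) that the twisting factor must be $g$ rather than $g^{-1}$, which is forced by the sign in $d_\F^{\lambda}\omega=d_\F\omega-\pi(\mu)\wedge\omega$ — the two stray $d_\F g$ terms cancel only with the factor $g$. As a sanity check I would also verify everything in a foliated chart, where the proof of \autoref{lem:mu_closed} gives $\pi(\mu)=d_\F(\log f)$ for $\lambda=f\,dx$, so that $f\mapsto gf$ shifts $\pi(\mu)$ by $d_\F g/g$, matching the coordinate-free computation.
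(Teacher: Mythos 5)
Your proof is correct and follows essentially the same route as the paper: establish $\pi(\mu')=\pi(\mu)+d_\F(\log g)$ for $\lambda'=g\lambda$, then exhibit multiplication by $g$ (the paper writes it as $e^{h}$ with $h=\log g$) as an explicit isomorphism of cochain complexes intertwining the two twisted differentials. The only cosmetic difference is that you allow $g$ merely nowhere-vanishing while the paper takes $g>0$; the computation is identical.
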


\begin{proof}
The proof is divided into two steps. First we will show that the cohomology class $[\pi(\mu)] \in H^1(\F)$ is independent of the choice of $\lambda$. Indeed, suppose $\lambda'=g\lambda$ is another defining one-form of $\F$, where $g>0 \in C^\infty(Y)$. Similarly let $\mu'$ be such that $d\lambda'=\mu' \wedge\lambda'$. Then we have
	\begin{align*}
		d\lambda' &= dg \wedge\lambda+g d\lambda \\
				&= (dg+g\mu)\wedge\lambda \\
				&= (d(\log g)+\mu)\wedge\lambda',
	\end{align*}
therefore we may choose $\mu'=\mu+d(\log g)$. Hence $\pi(\mu')=\pi(\mu)+d_\F(\log g)$, which implies that the class $[\pi(\mu)]$ depends only on $\F$ rather than $\lambda$.

Next we will show that the quasi-isomorphism class of the chain complex $(\Omega^\bullet(\F),d_\F^\lambda)$ depends only on $[\pi(\mu)] \in H^1(\F)$, which immediately implies the lemma. Let $d_\F^{\lambda'}$ be another twisted differential defined by $d_\F^{\lambda'}(\omega)=d_\F \omega-(\pi(\mu)+d_\F h)\wedge\omega$ for some $h \in C^\infty(Y)$ and any $\omega \in \Omega^\bullet(\F)$. We now define a chain map $\Phi$ in the following diagram
\begin{center}
$\begin{CD}
\cdots @>>> \Omega^k(\F) @>d_\F^\lambda>> \Omega^{k+1}(\F) @>>> \cdots\\
@. @VV\Phi V @VV\Phi V\\
\cdots @>>> \Omega^k(\F) @>d_\F^{\lambda'}>> \Omega^{k+1(\F)} @>>> \cdots
\end{CD}$
\end{center}
by $\Phi(\omega)=e^{h} \omega$ for any $\omega \in \Omega^\bullet(\F)$. We now check that $\Phi$ indeed defines a chain map by computing 
\begin{align*}
	d_\F^{\lambda'}(\Phi(\omega)) &= d_\F^{\lambda'}(e^{h}\omega) \\
		&= d_\F(e^{h}\omega)-(\pi(\mu)+d_\F h) \wedge e^{h}\omega \\
		&= e^{h}(d_\F \omega-\pi(\mu)\wedge\omega)=\Phi(d_\F^\lambda(\omega)).
\end{align*}
One can similarly define the inverse chain map $\Phi^{-1}$ by $\Phi^{-1}(\omega)=e^{-h} \omega$. In particular $\Phi$ induces an isomorphism on cohomology groups, as desired.
\end{proof}

Returning to the deformation theory of Legendrian foliations, we have the following characterization of infinitesimal deformations of Legendrian foliations in terms of one-cocycles in the twisted tangential de Rham cohomology.

\begin{theorem} \label{thm:leg_infi}
A tangential one-form $\zeta \in \Omega^1(\F)$ is an infinitesimal deformation of the (nonsingular) Legendrian foliation $(Y,\F)$ if and only if $d_\F^\lambda \zeta=0$ on $\Omega^2(\F)$.
\end{theorem}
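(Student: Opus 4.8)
The plan is to linearize the contact master equations (\ref{eqn:supfol}) and (\ref{eqn:subfol}) at the zero section. By an \emph{infinitesimal deformation} of $(Y,\F)$ I mean the derivative $\zeta = \frac{d}{dt}\big|_{t=0}\, s_t$ of a smooth family of coisotropic sections $s_t\colon Y \to T^\ast\F$ with $s_0 = 0$; equivalently, a solution of the system obtained from (\ref{eqn:supfol})--(\ref{eqn:subfol}) by retaining only the terms linear in $s$. Since $s \mapsto \overline{s} = i_L(s)$ is linear, differentiating a family commutes with passing to $\overline{s_t}$, so it suffices to isolate the linear part of each side of the two master equations.

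The first point is that (\ref{eqn:subfol}), i.e.\ $s \wedge d_\F s = 0$, is bilinear in $s$ and vanishes at $s=0$, so its linearization there is vacuous and imposes no condition on $\zeta$. Likewise the left-hand side $\overline{s} \wedge d\overline{s}$ of (\ref{eqn:supfol}) is quadratic and drops out, whereas the right-hand side $\overline{s} \wedge d\lambda + d\overline{s} \wedge \lambda$ is already linear in $s$. Hence the infinitesimal deformation equation is exactly
\[
	\overline{\zeta} \wedge d\lambda + d\overline{\zeta} \wedge \lambda = 0 \qquad \text{on } \Omega^3(Y).
\]

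It remains to identify this with $d_\F^\lambda \zeta = 0$. First I would invoke the Frobenius identity $d\lambda = \mu \wedge \lambda$ to rewrite the displayed equation as $(d\overline{\zeta} - \mu \wedge \overline{\zeta}) \wedge \lambda = 0$. Next I would record the elementary fact that, for a codimension-one foliation with $\F = \ker\lambda$, a two-form $\beta \in \Omega^2(Y)$ satisfies $\beta \wedge \lambda = 0$ if and only if $\pi(\beta) = 0$: in a foliated chart $\lambda = f\,dx$ with $f > 0$, the condition $\beta \wedge \lambda = 0$ forces every monomial of $\beta$ to contain $dx$, which is the same as $\pi(\beta)=0$. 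Applying this with $\beta = d\overline{\zeta} - \mu \wedge \overline{\zeta}$, and using that $\pi$ is an algebra map with $\pi(\overline{\zeta}) = \zeta$, the equation becomes $\pi(d\overline{\zeta}) = \pi(\mu) \wedge \zeta$. Finally, a one-line computation in a foliated chart — writing $\zeta = a_i\,dq^i$ and $\overline{\zeta} = -R^j a_j\,dx + a_i\,dq^i$ as in (\ref{eqn:bar}) — shows $\pi(d\overline{\zeta}) = d_\F \zeta$, since all terms of $d\overline{\zeta}$ containing $dx$ are killed by $\pi$ and the surviving term is precisely $\frac{\partial a_i}{\partial q^j}\,dq^j \wedge dq^i = d_\F\zeta$. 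Thus the equation reads $d_\F\zeta - \pi(\mu) \wedge \zeta = 0$, i.e.\ $d_\F^\lambda\zeta = 0$, as claimed; note that only $\pi(\mu)$, which is intrinsic to $\F$, enters, consistent with \autoref{lem:well-defined-tangential}.

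These computations are routine; the only mild subtlety is the bookkeeping of signs and the correct use of the identity $\pi \circ d \circ i_L = d_\F$ on $\Omega^1(\F)$, which holds for one-forms even though $i_L$ is not a chain map in general, and which one might prefer to isolate as a small lemma. If one wanted the stronger assertion that every twisted cocycle $\zeta$ is actually tangent to a path of genuine coisotropic sections, one would need in addition that the problem is unobstructed to first order — e.g.\ by producing $s_t = t\zeta + O(t^2)$ solving the full master equations — but that is not what the present theorem asserts, so I would not address it here.
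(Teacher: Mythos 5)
Your argument is correct and follows the same route as the paper: observe that (\ref{eqn:subfol}) and the left side of (\ref{eqn:supfol}) are quadratic, so the linearization at $s=0$ reduces to $\overline{\zeta}\wedge d\lambda + d\overline{\zeta}\wedge\lambda = (d\overline{\zeta}-\mu\wedge\overline{\zeta})\wedge\lambda=0$, and then identify this with $d_\F^\lambda\zeta=0$. The paper dispatches the last step with ``this is easily seen to be equivalent,'' whereas you supply the missing details (the criterion $\beta\wedge\lambda=0\iff\pi(\beta)=0$ and the identity $\pi(d\,i_L(\zeta))=d_\F\zeta$), which is a welcome but not substantively different elaboration.
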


\begin{proof}
Notice that (\ref{eqn:subfol}) and the LHS of (\ref{eqn:supfol}) are both quadratic, so the linearized equation which governs the infinitesimal deformation is just
\begin{equation*}
	\overline\zeta \wedge d\lambda+d\overline\zeta \wedge \lambda=(\overline\zeta \wedge\mu+d\overline\zeta) \wedge \lambda=0
\end{equation*}
on $\Omega^3(Y)$. This is easily seen to be equivalent to the equation
\begin{equation*}
	d_\F^\lambda(\zeta)=d\zeta-\mu\wedge\zeta=0
\end{equation*}
on $\Omega^2(\F)$. This finishes the proof.
\end{proof}

However, to correctly formulate the moduli problem of (infinitesimal) coisotropic deformations, we need to mod out by certain ``gauge equivalence'' relations, which will be discussed in \autoref{subsec:gauge_equiv}. It turns out that, similar to the deformation theory of Lagrangian submanifolds in a symplectic manifold, the moduli space of infinitesimal deformations of a coisotropic $(Y,\F) \subset (M,\xi)$ is governed by $H^1_{tw} (\F)$.

\subsection{Relationship with foliation theory} \label{subsec:foliation}
The goal of this section is to get a better understanding of the contact master equations by relating it to foliations. First we notice that (\ref{eqn:supfol}) is equivalent to the equation $$(\overline{s}-\lambda) \wedge d(\overline{s}-\lambda)=0,$$ since $\lambda \wedge d\lambda=0$ by Frobenius integrability theorem. Next note that (\ref{eqn:subfol}) is equivalent to saying $\ker s \subset \F$ also defines a (singular) foliation of codimension two in $Y$, again by Frobenius theorem. So if we let $s' := \overline{s}-\lambda$, Then $\ker s'$ defines a nonsingular codimension one foliation on $Y$ since $s'(L) \neq 0$. Conversely, given any nonsingular codimension one foliation $\mathcal{G}=\ker \mu$ which is $C^0$-close to $\F$, it is also transverse to $L$. So we may rescale $\mu$ by a nonzero function such that $\lambda(L)=-\mu(L)$, and define $s=\pi(\lambda+\mu) \in \Omega^1(\F)$. Then $s$ obviously satisfies (\ref{eqn:supfol}). To see $s$ also satisfies (\ref{eqn:subfol}), note that the distribution $\ker s \subset \F$ coincides with $\ker \lambda \cap \ker \mu=\F \cap \mathcal{G}$, which is clearly integrable.

In the light of the above discussion, the following result is expected.

\begin{theorem} \label{thm:leg_deform}
Given a coisotropic submanifold $Y \subset (M,\xi)$ with a nonsingular Legendrian foliation $\F$. Fix a line field $L$ transverse to $\F$. There is a one-to-one correspondence between
\begin{center}
	\{coisotropic submanifolds $C^1$-close to $Y$\}
\end{center}
and
\begin{center}
	\{codimension one foliations $C^1$-close to $\F$ in $Y$\}.
\end{center}
Moreover a coisotropic deformation of $Y$ is locally homotopic to $Y$, i.e., there exists a connecting path of coisotropic submanifolds, all of which are contained in a standard neighborhood of $Y$, if and only if their corresponding foliations in $Y$ are locally homotopic, i.e., homotopic within a $C^0$-small neighborhood of $\F$.
\end{theorem}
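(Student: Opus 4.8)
The plan is to promote the informal discussion preceding the statement into a pair of explicit, mutually inverse maps, check that each is continuous in the $C^{0}$ and $C^{1}$ topologies, and then read off the homotopy assertion as a formal consequence of bicontinuity. By the standard neighborhood theorem together with the footnote remark, every coisotropic submanifold $C^{1}$-close to $Y$ is the graph of a unique $C^{1}$-small section $s$ of $T^{\ast}\F$, and its graph is coisotropic precisely when \eqref{eqn:supfol} and \eqref{eqn:subfol} hold, so it suffices to work with such sections.

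For the bijection I would define the forward map $\Psi$ by $\Psi(s):=\ker(\overline{s}-\lambda)\subset TY$. Since the graph of $s$ is coisotropic, $s$ satisfies \eqref{eqn:supfol}, which rearranges (using $\lambda\wedge d\lambda=0$) into $(\overline{s}-\lambda)\wedge d(\overline{s}-\lambda)=0$; as $(\overline{s}-\lambda)(L)=-\lambda(L)\neq0$ the one-form $\overline{s}-\lambda$ is nowhere zero, so its kernel is a nonsingular codimension one foliation, $C^{1}$-close to $\F=\ker\lambda$ because $\overline{s}-\lambda$ is $C^{1}$-close to $-\lambda$. For the backward map $\Phi$, given a codimension one foliation $\G$ that is $C^{0}$-close to $\F$, I would take the unique defining one-form $\mu$ of $\G$ normalized by $\mu(L)=-\lambda(L)$ (possible since $\G$, being close to $\F$, is transverse to $L$, and $\mu$ is global since $Y$ is orientable and $\F$, hence $\G$, is co-oriented), and set $\Phi(\G):=\pi(\lambda+\mu)\in\Omega^{1}(\F)$. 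Because $(\lambda+\mu)(L)=0$ we get $\overline{\Phi(\G)}=\lambda+\mu$, so $(\overline{\Phi(\G)}-\lambda)\wedge d(\overline{\Phi(\G)}-\lambda)=\mu\wedge d\mu=0$ by Frobenius for $\G$, i.e.\ \eqref{eqn:supfol} holds; and \eqref{eqn:subfol} holds because $\ker\Phi(\G)=T\F\cap T\G$ is an intersection of two involutive distributions, hence involutive, so the graph of $\Phi(\G)$ is coisotropic. Finally $\Psi$ and $\Phi$ are mutually inverse: $\Psi(\Phi(\G))=\ker\mu=\G$, and $\Phi(\Psi(s))=\pi(\lambda+(\overline{s}-\lambda))=\pi(\overline{s})=s$, the normalization $(\overline{s}-\lambda)(L)=-\lambda(L)$ matching the one built into $\Phi$. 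This establishes the first assertion.

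I would then record that $\Psi$ and $\Phi$ are continuous in both the $C^{0}$ and the $C^{1}$ topology and carry a uniformly $C^{0}$-small (resp.\ $C^{1}$-small) family of sections to a family of foliations contained in a $C^{0}$-small (resp.\ $C^{1}$-small) neighborhood of $\F$, and conversely: indeed $i_{L}$, $\pi$, subtraction of $\lambda$, the rescaling fixing $\mu(L)=-\lambda(L)$ (continuous since $\mu(L)$ stays away from $0$), and passage to the kernel of a nowhere-zero one-form are all continuous in these topologies, and $\Psi(0)=\F$, $\Phi(\F)=0$. The homotopy statement now follows. A coisotropic deformation of $Y$ inside a fixed standard neighborhood $N\cong\{|p|<\varepsilon\}$ is exactly a continuous path $t\mapsto s_{t}$ of sections with $s_{0}=0$, with each $Y_{s_{t}}$ coisotropic, and with $\|s_{t}\|_{C^{0}}<\varepsilon$; applying $\Psi$ produces a continuous path $\G_{t}=\Psi(s_{t})$ of foliations with $\G_{0}=\F$ staying in a $C^{0}$-small neighborhood of $\F$, i.e.\ a local homotopy of foliations. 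Conversely, a local homotopy $\G_{t}$ of foliations with $\G_{0}=\F$, all within a $C^{0}$-small neighborhood of $\F$, is sent by $\Phi$ to a continuous path $s_{t}$ with $s_{0}=0$, each $Y_{s_{t}}$ coisotropic by the bijection, and---since being contained in $N$ is precisely the $C^{0}$ bound $\|s_{t}\|_{C^{0}}<\varepsilon$---all graphs lying in a standard neighborhood; this is the required coisotropic homotopy.

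The only genuine subtlety I anticipate is the bookkeeping of which neighborhoods on the two sides correspond, and in particular the observation that although graphicality of nearby coisotropic submanifolds requires $C^{1}$-closeness, the condition ``contained in a standard neighborhood'' appearing in the homotopy statement is a purely $C^{0}$ condition, so that $C^{0}$-continuity of $\Phi$ is exactly what makes the converse direction go through. The algebraic content---the bijection itself---is essentially already contained in the paragraph preceding the theorem; the remaining work is assembling it carefully and tracking the topologies.
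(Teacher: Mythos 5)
Your proposal is correct and follows essentially the same route as the paper: it makes explicit the correspondence $s \leftrightarrow \ker(\overline{s}-\lambda)$ from the discussion preceding the theorem (which the paper's proof simply invokes as ``clear from the above discussions'') and obtains the homotopy statement by applying that correspondence along a path of graphical coisotropic submanifolds, exactly as the paper does with its path $\mu_t$. The only difference is one of detail: you spell out the normalization $\mu(L)=-\lambda(L)$, the verification of both master equations for the inverse map, and the continuity bookkeeping that the paper leaves implicit.
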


\begin{proof}
The one-to-one correspondence is clear from the above discussions. In fact any $C^0$-small deformation\footnote{Two codimension one foliations are said to be $C^0$-close if the angle between the two leaves at each point is sufficiently small.} of $\F$ also gives rise to a $C^0$-small deformation of $Y$, but the converse is not necessarily true since a $C^0$-small deformation of $Y$ may not be graphical. So it suffices to prove the last statement. Suppose $Y' \subset M$ is another coisotropic submanifold, $C^1$-close to $Y$, such that $Y'$ is locally homotopic to $Y$, i.e., there is a path of coisotropic manifolds $Y_t, t\in [0,1]$, contained in a standard neighborhood of $Y$ such that $Y_0=Y$ and $Y_1=Y'$. Regarding $Y_t$ as the graph of tangential one-forms as usual, we obtain a path of one-forms $\mu_t, t \in [0,1]$, which defines a path of foliations $\F_t=\ker \mu_t$ in $Y$. This clearly gives the desired homotopy of $\F$. The converse works similarly and is omitted here. 
\end{proof}

\begin{remark}
Since the space of foliations is not locally path-connected, neither is the space of nearby coisotropic submanifolds. This is very different from the case of Legendrian (or Lagrangian) submanifolds, but is of course expected.
\end{remark}

Our next task is to understand the effect of coisotropic deformations on the Legendrian foliation. To this end, we must identify the original coisotropic submanifold with the deformed one. Suppose $Y \subset (M,\xi)$ is coisotropic with Legendrian foliation $\F$. Recall that a $C^1$-small deformation of $Y$ is another coisotropic submanifold $Y'$ given by the graph of a tangential one-form $s \in \Omega^1(\F)$, where $s$ is viewed as a section in the leafwise cotangent bundle $T^\ast\F$. Using local coordinates as before, let us define a diffeomorphism $\Phi_s: T^\ast\F \to T^\ast\F$ by $\Phi(y,\mathbf{q},\mathbf{p}) = (y,\mathbf{q},\mathbf{p}+s(y,\mathbf{q}))$, where $(y,\mathbf{q})$ are the coordinates on the base and $\mathbf{p}$ are the coordinates on the fiber. Then it is clear that $\Phi_s(Y)=Y'$. In the following we will always identify $Y'$ with $Y$ via $\Phi_s$. Now we compute
\begin{align*}
	\Phi_s^\ast(\alpha) &= \Phi_s^\ast ((f+R^\nu p_\nu)dx-p_\mu dq^\mu) \\
				  &= (f+R^\nu (p_\nu+s_\nu))dx-(p_\mu+s_\mu) dq^\mu.
\end{align*}
Hence the new Legendrian foliation $\F'$ on $Y'$ is defined by
\begin{equation*}
	\F'=\ker (\Phi^\ast \alpha |_{\{\mathbf{p}=0\}}) = \ker((f+R^\nu s_\nu)dx-s_\mu dq^\mu)=\ker(\overline{s}-\lambda),
\end{equation*}
where $\overline{s}$ is defined by (\ref{eqn:bar}). So we have proved the following result.

\begin{theorem} \label{thm:leg_real}
Given a coisotropic submanifold $Y \in (M,\xi)$ with a nonsingular Legendrian foliation $\F$. Then any foliation $\F'$ on $Y$, which is $C^0$-close to $\F$, may be realized as the Legendrian foliation of another coisotropic submanifold $Y'$ contained in a standard neighborhood of $Y$.
\end{theorem}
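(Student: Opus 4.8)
The plan is to read off the statement directly from the explicit computation already carried out just before \autoref{thm:leg_real}. Recall that, in the standard contact neighborhood $\pi: T^\ast\F \to Y$ with contact form $\alpha = \pi^\ast\lambda - \eta$, a $C^1$-small section $s \in \Omega^1(\F)$ produces a graph $Y_s = \Phi_s(Y)$, and the pulled-back contact form restricted to the zero section computes the new Legendrian foliation as $\F_s = \ker(\overline{s} - \lambda)$. So the heart of the argument is to run this correspondence backwards: given a foliation $\F'$ on $Y$ that is $C^0$-close to $\F$, I want to produce the section $s$ realizing it.

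First I would fix a defining one-form $\mu'$ for $\F'$, i.e. $\F' = \ker\mu'$. Since $\F'$ is $C^0$-close to $\F = \ker\lambda$ and $L \pitchfork \F$, the line field $L$ is still transverse to $\F'$; in particular $\mu'(L) \neq 0$ everywhere on $Y$ (this is exactly where $C^0$-closeness is used, and it is the only place). Next I would rescale: replace $\mu'$ by the unique positive multiple (or the appropriate nonvanishing multiple) so that $\mu'(L) = -\lambda(L)$ pointwise; call the result again $\mu'$. Then $\lambda + \mu'$ annihilates $L$, so by the remark in \autoref{sec:contact_MEQ} following \eqref{eqn:bar} we have $\overline{\pi(\lambda+\mu')} = \lambda + \mu'$. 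Set $s := \pi(\lambda + \mu') \in \Omega^1(\F)$; by construction $\overline{s} - \lambda = \mu'$, and $s$ is $C^1$-small because $\mu'$ is $C^0$-close to $\lambda$ (shrinking the neighborhood of $\F$ as needed) — here one should note that a $C^0$-small deformation of a foliation can be taken to vary $C^1$-smoothly in the defining form, or simply observe that it suffices for the graph of $s$ to be graphical, which $C^0$-closeness guarantees.

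It then remains to check that the graph $Y_s$ of this $s$ is genuinely coisotropic, i.e. that $s$ satisfies the contact master equations \eqref{eqn:supfol} and \eqref{eqn:subfol}. For \eqref{eqn:supfol}: since $\overline{s} - \lambda = \mu'$ and $\F' = \ker\mu'$ is a codimension-one foliation, Frobenius gives $\mu' \wedge d\mu' = 0$, which is precisely $(\overline{s}-\lambda)\wedge d(\overline{s}-\lambda) = 0$, and this is equivalent to \eqref{eqn:supfol} by the computation in \autoref{subsec:foliation} (using $\lambda \wedge d\lambda = 0$). For \eqref{eqn:subfol}: as observed in \autoref{subsec:foliation}, this is the statement that $\ker s \subset \F$ is integrable, and $\ker s = \ker\pi(\lambda+\mu') = \F \cap \ker\mu' = \F \cap \F'$, which is an intersection of two involutive distributions and hence involutive by Frobenius. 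Thus $s$ solves the master equations, so $Y_s$ is coisotropic, and by the computation preceding the theorem its Legendrian foliation is $\ker(\overline{s}-\lambda) = \ker\mu' = \F'$. Finally, $Y_s$ lies in the chosen standard neighborhood of $Y$ by $C^1$-smallness of $s$, completing the proof.

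The only genuinely delicate point is the $C^0$-close $\Rightarrow$ $C^1$-small passage, i.e. ensuring that a $C^0$-small perturbation of $\F$ yields a section $s$ small enough in the $C^1$ (or merely graphical) sense to live inside the fixed standard neighborhood; everything else is a direct translation through \eqref{eqn:bar}, \autoref{thm:master}, and the two Frobenius observations from \autoref{subsec:foliation}. Since the theorem's hypothesis only demands $\F'$ be $C^0$-close and the conclusion only demands $Y'$ sit in \emph{a} standard neighborhood (whose size we may shrink first), this point is handled by choosing the neighborhood appropriately at the outset.
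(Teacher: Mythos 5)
Your proposal is correct and follows essentially the same route as the paper: the paper's proof of this theorem is exactly the combination of the computation $\F'=\ker(\overline{s}-\lambda)$ via $\Phi_s^\ast\alpha$ carried out immediately before the theorem statement with the converse construction from the start of \autoref{subsec:foliation} (rescale the defining form $\mu$ of $\F'$ so that $\mu(L)=-\lambda(L)$, set $s=\pi(\lambda+\mu)$, and verify the two master equations via Frobenius applied to $\ker\mu$ and to $\F\cap\ker\mu$). Your extra paragraph on the $C^0$-versus-$C^1$ smallness of $s$ is a reasonable precaution but not a departure from the paper's argument, which only needs $s$ small enough for its graph to lie in the standard neighborhood.
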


We end this section by a simple example which shows that the coisotropic deformation (in the nonsingular case) is in general obstructed. Following \autoref{thm:leg_deform}, this is the same as showing that the deformation of a nonsingular codimension one foliation is obstructed.

\begin{example}
Consider $T^3_{x,y,z}=\mathbb{R}^3 / \mathbb{Z}^3$ and the foliation $\F=\ker\lambda$ where $\lambda=dz$.  Let $L=\langle \p_z \rangle$ be the transverse line field. Then clearly $d\lambda=0$ and therefore $d_\F=d_\F^\lambda$. It follows from \autoref{thm:leg_infi} that the tangential one-form $\zeta=\cos zdx+\sin zdy$ defines an infinitesimal deformation. Suppose there is a genuine coisotropic deformation $\lambda_t=dz+t\overline{\zeta}+t^2\overline{\eta}+o(t^3)$, namely, $\lambda_t \wedge d\lambda_t=0$. Then by comparing the second-order term, we must have
\begin{equation*}
	d_\F \eta=\zeta \wedge \frac{\p \zeta}{\p z}=dx \wedge dy.
\end{equation*}
But this is clearly impossible. Hence $\F$ cannot be deformed in the direction of $\zeta$.
\end{example}

\subsection{Gauge equivalence} \label{subsec:gauge_equiv}
The goal of this section is to give a strengthened version of \autoref{thm:leg_deform} by taking into account the so-called gauge equivalences. In this way we will also complete the description of the moduli space of (infinitesimal) coisotropic deformations as promised in the end of \autoref{sec:leg_infi}.

Given a coisotropic submanifold with nonsingular Legendrian foliation $(Y,\F) \subset (M,\xi)$, we say two nearby coisotropic submanifolds $Y_0, Y_1$ are {\em gauge equivalent} if there is a $C^1$-small ambient contact isotopy $\phi_t: M \to M, t \in [0,1]$, which is supported in the standard neighborhood of $Y$, and is such that $\phi_0$ is the identity map and $\phi_1(Y_0)=Y_1$.

Similarly, consider two foliations $\F_0,\F_1$ which are $C^0$-close to a fixed foliation $\F$ on $Y$. We say $\F_0$ and $\F_1$ are {\em gauge equivalent} if there is a $C^1$-small ambient isotopy $\psi_t: Y \to Y, t \in [0,1]$, through diffeomorphisms such that $\psi_0$ is the identity map and $\psi_1(\F_0)=\F_1$. Equivalently, suppose $\F_i=\ker(\lambda_i), i=0,1$, for some $\lambda_i \in \Omega^1(Y)$. Then we the condition that $\psi_1(\F_0)=\F_1$ is the same as that $\psi_1^\ast(\lambda_1)=g\lambda_0$ for some non-vanishing $g \in C^\infty(Y)$. Note that unlike contact structures, foliations are unstable structures in the sense that two isotopic foliations are not necessarily gauge equivalent.

The main result of this section is as follows.

\begin{theorem} \label{thm:ModGauge}
Given the same conditions as in \autoref{thm:leg_deform}, there is a one-to-one correspondence between the gauge equivalent classes of coisotropic submanifolds $C^1$-close to $Y$ and the gauge equivalent classes of codimension one foliations $C^1$-close to $\F$.
\end{theorem}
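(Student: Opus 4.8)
The plan is to promote the bijection of \autoref{thm:leg_deform} to the level of gauge--equivalence classes. Recall that this bijection sends a coisotropic graph $Y_s$ to the foliation $\F_s:=\ker(\overline s-\lambda)$ on $Y$, where $Y_s$ is identified with $Y$ via the fibrewise translation $\Phi_s$; since the bijection is already surjective, it descends to a bijection on gauge--equivalence classes once we establish the two implications: \emph{(i)} if $Y_0=Y_s$ and $Y_1=Y_{s'}$ are gauge equivalent as coisotropic submanifolds, then $\F_s$ and $\F_{s'}$ are gauge equivalent as foliations on $Y$; and \emph{(ii)} the converse. The structural input used throughout is the naturality of the nonsingular Legendrian foliation under contactomorphisms: if $\phi$ is a contactomorphism of $(M,\xi)$ and $Z$ is coisotropic with nonsingular Legendrian foliation $\F_Z$, then $\phi$ carries $\F_Z$ to $\F_{\phi(Z)}$, because $\phi^\ast\alpha$ and $\alpha$ differ by a positive conformal factor and $\F_Z=\ker(\alpha|_{TZ})$.

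For \emph{(i)}, let $\phi_t\colon M\to M$ be a $C^1$-small contact isotopy supported in the standard neighborhood, with $\phi_0=\mathrm{id}$ and $\phi_1(Y_0)=Y_1$. By $C^1$-smallness each $\phi_t(Y_0)$ is again a coisotropic graph with nonsingular Legendrian foliation, say $\phi_t(Y_0)=Y_{\tau_t}$ with $\tau_0=s$ and $\tau_1=s'$, and its Legendrian foliation corresponds on $Y$ to $\F_{\tau_t}=\ker(\overline{\tau_t}-\lambda)$ under $\Phi_{\tau_t}$. I would then set $\psi_t:=\bigl(\Phi_{\tau_t}^{-1}\circ\phi_t\circ\Phi_s\bigr)|_Y\colon Y\to Y$. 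A routine check shows that $\psi_t$ is a $C^1$-small isotopy of $Y$ with $\psi_0=\mathrm{id}$, and pushing $\F_s$ forward through the three maps — using $(\Phi_s)_\ast\F_s=\F_{Y_0}$, then $(\phi_t)_\ast\F_{Y_0}=\F_{Y_{\tau_t}}$ by naturality, then $(\Phi_{\tau_t}^{-1})_\ast\F_{Y_{\tau_t}}=\F_{\tau_t}$ — yields $\psi_t(\F_s)=\F_{\tau_t}$, so in particular $\psi_1(\F_s)=\F_{s'}$. Hence $\F_s$ and $\F_{s'}$ are gauge equivalent.

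For \emph{(ii)}, let $\psi_t\colon Y\to Y$ be a $C^1$-small ambient isotopy with $\psi_0=\mathrm{id}$ and $\psi_1(\F_s)=\F_{s'}$. The path of foliations $\mathcal{G}_t:=\psi_t(\F_s)$ is $C^0$-close to $\F$ and runs from $\F_s$ to $\F_{s'}$. Choosing defining one-forms $\mu_t$ of $\mathcal{G}_t$ smoothly in $t$, normalized by $\mu_t(L)=-\lambda(L)$, and setting $r_t:=\pi(\lambda+\mu_t)$, the family version of the construction of \autoref{subsec:foliation} together with \autoref{thm:leg_real} produces a smooth path of coisotropic submanifolds $Y_{r_t}$ inside the standard neighborhood; since $\overline s-\lambda$ and $\overline{s'}-\lambda$ are already $L$-normalized one gets $r_0=s$ and $r_1=s'$, so $Y_{r_0}=Y_0$ and $Y_{r_1}=Y_1$. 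It remains to realize this isotopy of submanifolds by an ambient contact isotopy. Note that the fibrewise translations $\Phi_{r_t}$ themselves are generally \emph{not} contactomorphisms (one computes $\Phi_r^\ast\alpha=\alpha-\overline r$), so this step is not formal; it is instead an instance of the contact isotopy extension theorem, to the effect that any smooth isotopy of a closed submanifold of a contact manifold extends to a compactly supported ambient contact isotopy. Cutting off inside the standard neighborhood we obtain a $C^1$-small contact isotopy $\phi_t$ with $\phi_0=\mathrm{id}$ and $\phi_t(Y_0)=Y_{r_t}$, hence $\phi_1(Y_0)=Y_1$. Combined with \emph{(i)} this gives the asserted bijection.

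The step I expect to be the main obstacle is the last one in \emph{(ii)}: the contact isotopy extension must be carried out so that the resulting $\phi_t$ remains $C^1$-small and stays supported in the fixed standard neighborhood of $Y$. This forces one to choose the auxiliary family $\mu_t$ — equivalently the path $\{Y_{r_t}\}$ — with controlled size and velocity, and the compatibility between the foliation-side and the coisotropic-side correspondences has to be tracked with some care. Apart from this smoothness-and-smallness bookkeeping, the argument reduces to the naturality statement and the explicit formulas already established above.
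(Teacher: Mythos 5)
Your direction \emph{(i)} is essentially the paper's argument: conjugating the ambient contact isotopy by the fibrewise identifications $\Phi_{\tau_t}$ and using naturality of the characteristic foliation under contactomorphisms. The problem is in direction \emph{(ii)}, and it is not the ``smoothness-and-smallness bookkeeping'' you flag at the end. The statement you invoke --- ``any smooth isotopy of a closed submanifold of a contact manifold extends to a compactly supported ambient contact isotopy'' --- is false in that generality. Contactomorphisms preserve contact-geometric structures induced on submanifolds: they carry coisotropic submanifolds to coisotropic submanifolds \emph{together with} their characteristic foliations, and (already in dimension 3) an isotopy of a surface extends to a contact isotopy only if the characteristic foliations are matched along the way; likewise a smooth isotopy of a Legendrian through non-Legendrian knots admits no contact-isotopy extension. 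So the crucial step of \emph{(ii)} is exactly the one you have black-boxed, and the theorem you cite does not exist off the shelf for this class of submanifolds.

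What saves the argument --- and what the paper actually does --- is that your path $Y_{r_t}$ comes equipped with the extra data needed: the isotopy $\psi_t$ of $Y$ induces fibrewise maps $\psi'_t$ of $T^\ast\F$ moving $Y_0$ to $Y_{r_t}$ while matching the Legendrian foliations (each $(\psi'_t)^\ast\xi$ keeps $Y_0$ coisotropic with the \emph{same} foliation $\G_0$). One then applies the standard neighborhood theorem for coisotropic submanifolds with nonsingular Legendrian foliation (\cite[Lemma 3.2]{H2013}), i.e.\ a Gray/Moser-type stability argument, to produce correcting diffeomorphisms $\rho_t$ fixing $Y_0$ with $\rho_t^\ast((\psi'_t)^\ast\xi)=\xi$; the composites $\psi'_t\circ\rho_t$ are then genuine contactomorphisms realizing the isotopy. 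In other words, the ``isotopy extension theorem'' you need must be proved here, for this specific class of submanifolds carrying their foliation data, and its proof is the Moser correction via the neighborhood theorem. As written, your proposal asserts the conclusion of that step rather than establishing it.
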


\begin{proof}
It suffices to prove that two coisotropic deformations are gauge equivalent if and only if their corresponding foliations are gauge equivalent. Our proof mimics the proof of the well-known ambient contact isotopy theorem for Legendrian submanifolds. 

First, let $Y_{\mu_0}, Y_{\mu_1}$ be two coisotropic submanifolds in the standard neighborhood of $Y$ given by the graphs of $\mu_0, \mu_1 \in \Omega^1(\F)$, respectively. Suppose $\phi_t: M \to M, t \in [0,1]$, is a $C^1$-small ambient contact isotopy such that $\phi_1(Y_{\mu_0})=Y_{\mu_1}$. Then there exists a path of $\mu_t \in \Omega^1(\F)$ such that $\phi_t(Y_{\mu_0})=Y_{\mu_t}$. To get an isotopy of $Y$, recall the discussions prior to \autoref{thm:leg_real} implies that there is a canonical identification $\Phi_{t}: Y \IsoTo Y_{\mu_t}$. So it is easy to see that 
\begin{equation*}
	\psi_t = \Phi_t^{-1} \circ \phi_t \circ \Phi_0: Y \to Y
\end{equation*}
is an isotopy of $Y$ which sends $\F_0=\ker(\overline{\mu_0}-\lambda)$ to $\F_t=\ker(\overline{\mu_t}-\lambda)$ for $0 \leq t \leq 1$ as desired.

Now we prove the other direction. Namely, let $\G_i=\ker(\overline{\mu_i}-\lambda), i=0,1$, be two foliations on $Y$. Suppose there is a $C^1$-small isotopy $\psi_t: Y \to Y, t\in[0,1]$, such that $\psi_1(\G_0)=\G_1$. Let us first use $\psi_t$ to construct an isotopy of diffeomorphisms $\psi'_t: T^\ast\F \to T^\ast\F$ as follows. For any section $\mu \in \Omega^1(\F)$ of $T^\ast\F$, since we assume $\psi_t$ is $C^1$-small, there exists a unique $g \neq 0 \in C^\infty(Y)$ and $\mu_t \in \Omega^1(\F)$ such that $\psi_t(\overline{\mu}-\lambda)=g_t(\overline{\mu_t}-\lambda)$. Then we simply define $\psi'_t(\mu)=\mu_t$. Now $\psi'_t: T^\ast\F \to T^\ast\F$ is clearly a path of diffeomorphisms, but to get an actual contact isotopy, let us observe that by construction, we have a path of contact structures $(\psi'_t)^\ast(\xi)$ in a neighorbood of $Y_{\mu_0}$ such that $Y_{\mu_0}$ stays coisotropic with Legendrian foliation $\G_0$ throughout $t \in [0,1]$. By the neighborhood theorem in \cite[Lemma 3.2]{H2013}, there exists a path of diffeomorphisms $\rho_t: N(Y_{\mu_0}) \to N(Y_{\mu_0}), t\in[0,1]$, such that $\rho_0$ are $\rho_t |_{Y_{\mu_0}}$ are the identity maps, and $\rho_t^\ast ((\psi'_t)^\ast (\xi))=\xi$ for any $t \in [0,1]$. Finally observe that $\psi''_t=\psi'_t \circ \rho_t$ is a contactomorphisms whenever it is defined, and it is easy to extend it to a everywhere defined contactomorphism which we still denote by $\psi''_t: T^\ast\F \to T^\ast\F$. Then $\psi''_1(Y_{\mu_0})=Y_{\mu_1}$ as desired.
\end{proof}

It follows from \autoref{thm:ModGauge} that the deformation problem of nonsingular Legendrian foliations is the same as the deformation problem of smooth codimension one foliations. The solution to the later is well-known, in a slightly different language, in foliation theory by the work of Heitsch \cite{He1975}. Here we nevertheless formulate the result as follows and provide an alternative proof using Moser's technique.

\begin{cor} \label{cor:infini}
The infinitesimal deformation of a nonsingular Legendrian foliation $(Y,\F)$ modulo gauge equivalence is modeled by $H^1_{tw}(\F)$.
\end{cor}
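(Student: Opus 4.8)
The plan is to combine \autoref{thm:leg_infi}, which identifies the one-cocycles of $(\Omega^\bullet(\F),d_\F^\lambda)$ with the infinitesimal coisotropic deformations, with a linearized version of the gauge equivalence analysis carried out in the proof of \autoref{thm:ModGauge}. Concretely, an infinitesimal deformation is a tangential one-form $\zeta \in \Omega^1(\F)$ with $d_\F^\lambda \zeta = 0$, and I must show that the ``trivial'' infinitesimal deformations --- those arising from differentiating a path of gauge equivalences at $t=0$ --- are precisely the exact ones $\zeta = d_\F^\lambda h$ for $h \in C^\infty(Y) = \Omega^0(\F)$. Granting this, the quotient is $H^1(\Omega^\bullet(\F),d_\F^\lambda) = H^1_{tw}(\F)$ by definition.

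First I would compute the infinitesimal gauge action. Let $\psi_t: Y \to Y$ be a $C^1$-small isotopy with $\psi_0 = \mathrm{id}$, generated by a vector field $X = \frac{d}{dt}\big|_{t=0}\psi_t$; following the proof of \autoref{thm:ModGauge}, it acts on sections $\mu \in \Omega^1(\F)$ via the rule $\psi_t(\overline{\mu} - \lambda) = g_t(\overline{\mu_t} - \lambda)$. Differentiating this at $t=0$ with $\mu = 0$ (i.e.\ at the base point $\F$) gives $\mathcal{L}_X \lambda = \dot g_0 \lambda + \overline{\dot\mu_0}$ for some function $\dot g_0$ and the resulting infinitesimal deformation $\zeta = \dot\mu_0 = \pi(\mathcal{L}_X\lambda)$. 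Using Cartan's formula $\mathcal{L}_X\lambda = d(\iota_X\lambda) + \iota_X d\lambda$ and $d\lambda = \mu\wedge\lambda$, one gets $\mathcal{L}_X\lambda = d h + (\iota_X\mu)\lambda - h\mu$ where $h = \iota_X\lambda$, so that projecting to $\Omega^1(\F)$ and discarding the $\lambda$-multiple yields $\zeta = d_\F h - \pi(\mu) h = d_\F^\lambda h$. Thus every infinitesimal gauge deformation is $d_\F^\lambda$-exact, and conversely, given $h$, the flow of a vector field $X$ with $\iota_X\lambda = h$ (any such $X$ exists since $\lambda$ is nonvanishing) realizes $d_\F^\lambda h$ as a gauge deformation. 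This shows the space of infinitesimal deformations modulo gauge is exactly $H^1_{tw}(\F)$.

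Alternatively --- and this is the ``Moser's technique'' alluded to in the statement --- I would run the argument at the level of a one-parameter family directly: given a path $\mu_t \in \Omega^1(\F)$ of coisotropic deformations with $\mu_0 = 0$ whose derivative $\dot\mu_0 = d_\F^\lambda h$ is exact, I would solve for a time-dependent vector field $X_t$ on $Y$ (via the nonvanishing of $\lambda_t := \overline{\mu_t} - \lambda$, setting $\iota_{X_t}\lambda_t$ equal to a suitably chosen primitive) whose flow $\psi_t$ satisfies $\psi_t^\ast \ker\lambda_t = \ker\lambda_0$, and then lift $\psi_t$ to an ambient contact isotopy exactly as in the proof of \autoref{thm:ModGauge} using the neighborhood theorem \cite[Lemma 3.2]{H2013}. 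The linearization of this construction at $t=0$ recovers the cocycle computation above, so the two approaches agree.

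The main obstacle is the bookkeeping in the infinitesimal gauge computation: one must be careful that the splitting $\mathcal{L}_X\lambda = (\text{multiple of }\lambda) + (\text{the rest})$ is well-defined independently of the auxiliary choices (the line field $L$, the representative $\mu$ with $d\lambda = \mu\wedge\lambda$), and that the resulting class in $H^1_{tw}(\F)$ genuinely does not depend on these choices --- but this is exactly the content of \autoref{lem:mu_closed} and \autoref{lem:well-defined-tangential}, which guarantee that $d_\F^\lambda$ and its cohomology are intrinsic. The remaining subtlety is purely analytic: ensuring the vector fields produced are $C^1$-small so the isotopies stay inside the standard neighborhood, which follows from the $C^1$-smallness hypotheses already built into the definition of gauge equivalence. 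I expect the computation $\zeta = d_\F^\lambda h$ to be a short calculation once Cartan's formula and $d\lambda = \mu\wedge\lambda$ are substituted, so the proof should be brief.
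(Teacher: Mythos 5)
Your proposal is correct and follows essentially the same route as the paper: differentiate the gauge action at $t=0$, apply Cartan's formula together with $d\lambda=\mu\wedge\lambda$, and collect the tangential terms to identify the infinitesimal gauge orbit with the coboundaries $d_\F^\lambda(\iota_X\lambda)$. The only (welcome) addition is that you explicitly note the converse surjectivity --- every $d_\F^\lambda h$ is realized by flowing a vector field with $\iota_X\lambda=h$ --- which the paper leaves implicit.
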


\begin{proof}
By \autoref{thm:leg_infi} and \autoref{thm:ModGauge}, it suffices to show that a deformation of $\F$ induced by an ambient isotopy $\psi_t: Y \to Y$, with $\psi_0=\text{id}$, is infinitesimally determined by a coboundary in the twisted tangential complex $(\Omega^\bullet (\F), d_\F^\lambda)$, where $\lambda$ is a defining one-form of $\F$.

To this end, let $\lambda_t=\psi_{-t}^\ast(\lambda)$, and write $\lambda_t=g_t(\lambda+t\mu+o(t^2))$ where $g_t$ is a non-vanishing function on $Y$ with $g_0=1$ and $\mu \in \Omega^1(\F)$ is a tangential one-form\footnote{Hereafter we will be too lazy to distinguish $\mu$ and $\overline{\mu}$ if no confusion is possible.}. Then the goal is show that $\mu$ is a coboundary in $\Omega^1(\F)$ with respect to $d_\F^\lambda$. By differentiating with respect to $t$ on both sides of the following equation
\begin{equation*}
	\psi_t^\ast \lambda_t=\lambda,
\end{equation*}
and simplify the result a little bit, we have
\begin{equation*}
	\mathcal{L}_V \lambda_t+\dot\lambda_t=0,
\end{equation*}
where $\psi_t$ is given by the time-$t$ flow of the vector field $V$. 

Recall the splitting $TY=T\F \oplus L$, we can write $V=V_0+V_1$ such that $V_0$ is tangent to $L$ and $V_1$ is tangent to $\F$. Using Cartan's formula and the $t$-expansion of $\lambda_t$, we have the following equation at time $t=0$.
\begin{equation*}
	0 = \mathcal{L}_V \lambda+\dot g_{t=0} \lambda+\mu = d(\lambda(V_0))+(\delta(V_1)+\dot g_{t=0})\lambda-\lambda(V_0)\delta+\mu
\end{equation*}
Finally, by collecting all the tangential terms from the above equation, we see $\mu=-d_\F^\lambda (\lambda(V_0))$ as desired.
\end{proof}

\subsection{More applications in foliation theory} \label{subsec:appl}
From the previous sections it should be clear that there is a close relationship between foliation and coisotropic submanifolds via contact thickening. In fact in the light of \autoref{thm:ModGauge}, one should expect some new invariants of foliations which come from invariants of coisotropic submanifolds of a contact manifold, in particular, Floer-theoretic invariants. However in this paper, we will restrict ourself to more classical tools in contact geometry, which already give some interesting applications to foliation theory.

Given a closed manifold $Y$ with a smooth codimension one foliation $\F$. Let $\F_t, t\in[0,1]$, be a $C^0$-small homotopy\footnote{This is not a serious restriction because any homotopy of foliations can be decomposed into a sequence of $C^0$-small ones.} of foliations with $\F_0=\F$. We will investigate in this section when $\F_t$ is an isotopy of foliations, i.e., when there exists an isotopy $\phi_t:Y \to Y$, such that $\phi_t^\ast(\F_t)=\F$. As usual, we can consider the contact manifold $T^\ast\F$ in which $(Y,\F)$ is a Legendrian foliation as the zero section. Then we can identify the path $\F_t$ of foliations as a path of (graphical) coisotropic submanifolds $Y_t \subset T^\ast \F$ such that $Y=Y_0$. By \autoref{thm:ModGauge}, the homotopy $\F_t$ of foliation is an isotopy if and only if there exists a contact isotopy $\psi_t: T^\ast\F \to T^\ast\F$ such that $\psi_0=id$ and $\psi_t(Y)=Y_t$ for any $t \in [0,1]$. The point is that any contact isotopy is generated by a contact Hamiltonian function, which we briefly recall as follows.

Given a contact manifold $(M,\xi=\ker\alpha)$, and a smooth function $H \in C^\infty(M)$ which we call the contact Hamiltonian function, there is a unique contact vector field $X_H=HR-\nu$, where $R$ is the Reeb vector field and $\nu \in \ker\alpha$ is a vector field such that $i_\nu d\alpha=dH|_\xi$. We call such $X_H$ the contact Hamiltonian vector field associated with $H$. The time-$t$ flow $\phi_t: M \to M$ of $X_H$ is clearly a contact isotopy. It turns out that any contact isotopy is given by the flow of some contact Hamiltonian vector field.

Returning to our situation, choose a foliated chart $\U \cong \RR_x \times \RR_{q^1,\cdots,q^n}$ in $(Y,\F)$ and write the transverse line field $L=\langle \p_x+R^i \p_{q^i} \rangle$ as before. Then the contact form is written as $\alpha=(f+R^i p_i)dx-p_i dq^i$, where $\lambda=\alpha|_Y=fdx$ is the defining one-form of $\F$. The Reeb vector field is easily computed as follows
\begin{equation*}
	R=\big( \p_x+R^i \p_{q^i}-(\frac{\p f}{\p q^i}+p_j \frac{\p R^j}{\p q^i}) \p_{p_i} \big)/f.
\end{equation*}

Now given a (time-dependent) contact Hamiltonian function $H \in C^\infty(T^\ast\F)$, we have, after somewhat tedious calculations, the corresponding contact Hamiltonian vector field as follows
\begin{equation*}
	X_H=\frac{1}{f} \Big( (H-p_k \frac{\p H}{\p p_k})\p_x + (HR^i-f\frac{\p H}{\p p_i}-R^ip_k\frac{\p H}{\p p_k})\p_{q^i}-\big( (H-p_j \frac{\p H}{\p p_j}) (\frac{\p f}{\p q^i}+p_k \frac{\p R^k}{\p q^i})-\Phi p_i-f\frac{\p H}{\p q^i} \big)\p_{p_i} \Big),
\end{equation*}
where $\Phi=\p_x H+R^k\p_{q^k} H-(\p_{q^j} f+p_k \p_{q^j} R^k) \p_{p_j} H$.

Hence the time-$t$ flow $\phi_t$ of $X_H$ is given by the solution to the following system of ODEs.
\begin{equation} \label{eqn:contact_flow}
\begin{cases}
	\dot{x}(t) &= \frac{1}{f} (H-p_k \frac{\p H}{\p p_k}), \\
	\dot{q}^i(t) &= \frac{1}{f} (HR^i-f\frac{\p H}{\p p_i}-R^ip_k\frac{\p H}{\p p_k}), \\
	\dot{p}_i(t) &= \frac{1}{f} (\Phi p_i+f\frac{\p H}{\p q^i}-(H-p_j \frac{\p H}{\p p_j})(\frac{\p f}{\p q^i}+p_k \frac{\p R^k}{\p q^i})).
\end{cases}
\end{equation}

Let us identify the path of isotropic submanifolds $Y_t=\phi_t(Y)$ with the path of foliations $\F_t=\ker\lambda_t$ where
\begin{equation} \label{eqn:lambda_t}
	\lambda_t = (f+R^k p_k(t))dx-p_i(t) dq^i.
\end{equation}
Here $p_i(t)=p_i(t)(x,\mathbf{q})$ is a smooth function on $Y$ for any $i$. By differentiating with respect to $t$ and hiding the $t$-variable for conciseness, we have
\begin{equation} \label{eqn:pdt}
	\dot{p}_i = \frac{d}{dt} (p_i (x,\mathbf{q})) = \dot{p}_i(x,\mathbf{q})+\frac{\p p_i} {\p x} \dot{x}+\frac{\p p_i} {\p q^k} \dot{q}^k,
\end{equation}
for any $i$. Now plugging (\ref{eqn:contact_flow}) into (\ref{eqn:pdt}), we have
\begin{align} \label{eqn:isotopy_condition}
	\dot{p}_i = ~&\frac{1}{f}(\frac{\p p_i}{\p x}+\frac{\p f}{\p q^i}+p_j \frac{\p R^j}{\p q^i}+R^k \frac{\p p_i}{\p q^k}) (H-p_k \frac{\p H}{\p p_k}) - \frac{\p p_i}{\p q^k} \frac{\p H}{\p p_k} \\
				 &- \frac{\p H}{\p q^i}-\frac{1}{f}(\frac{\p H}{\p x}+R^k \frac{\p H}{\p q^k}-(\frac{\p f}{\p q^j}+p_k \frac{\p R^k}{\p q^j}) \frac{\p H}{\p p_j}) p_i. \nonumber
\end{align}

\begin{remark}
At time $t=0$, we have $p_{i,t=0}=0$ for all $i$ by assumption. So (\ref{eqn:isotopy_condition}) takes the following simple form
\begin{equation*}
	\dot{p}_{i,t=0} = \frac{H_{t=0}}{f} \frac{\p f}{\p q^i}-\frac{\p H_{t=0}}{\p q^i} = -d_{\F}^\lambda H_{t=0}.
\end{equation*}
This, of course, coincides with the conclusion in \autoref{cor:infini}.
\end{remark}

In principle, it should give a complete characterization of when a homotopy of foliations is an isotopy by investigating the flow of $X_H$ for all $H$. But in practice, it is helpful to build our way up by setting up a hierarchy on the complexity of $H$, say, by the homogenous degree of $H$ in the cotangent directions. This is what we will do in the following.

\subsubsection{Zeroth-order approximation} \label{subsec:0order}
Let us first consider the time-dependent contact Hamiltonian $H \in C^\infty(Y)$, i.e., $H$ does not depend on the cotangent directions. Then (\ref{eqn:isotopy_condition}) can be simplified as follows.
\begin{equation} \label{eqn:0orderP}
	\dot{p}_i=(\frac{\p p_i}{\p x}+\frac{\p f}{\p q^i}+p_k \frac{\p R^k}{\p q^i}+R^k \frac{\p p_i}{\p q^k}) \frac{H}{f}-\frac{\p H}{\p q^i}-(\frac{\p H}{\p x}+R^k \frac{\p H}{\p q^k}) \frac{p_i}{f}
\end{equation}

By plugging (\ref{eqn:0orderP}) into (\ref{eqn:lambda_t}), we have
\begin{align} \label{eqn:isotopy_notime}
	\dot{\lambda} &= (R^k \dot{p}_k) dx-\dot{p}_i dq^i \\
				  &= \frac{H}{f} \Big( R^j (\frac{\p p_j}{\p x}+\frac{\p f}{\p q^j}+p_k \frac{\p R^k}{\p q^j}+R^k \frac{\p p_j}{\p q^k}) dx-(\frac{\p p_i}{\p x}+\frac{\p f}{\p q^i}+p_k \frac{\p R^k}{\p q^i}+R^k \frac{\p p_i}{\p q^k}) dq^i \Big) \nonumber\\
				  &\hspace{5mm} +\frac{\p H}{\p q^i} dq^i-R^k \frac{\p H}{\p q^k} dx+\frac{1}{f} (\frac{\p H}{\p x}+R^k \frac{\p H}{\p q^k}) (p_i dq^i-R^k p_k dx) \nonumber
\end{align}

To get a more invariant form of (\ref{eqn:isotopy_notime}), let $v$ be the vector field tangent to $L$ such that $\lambda(v)=1$. Recall $\F_0=\F=\ker\lambda$. Then by straightforward calculations, (\ref{eqn:isotopy_notime}) is equivalent to the following
\begin{align} \label{eqn:isotopy_notime_invt}
	\dot{\lambda}_t &= Hi_v d\lambda_t+\overline{d_{\F} H}+v(H)(\lambda-\lambda_t) \\
					&= dH+Hi_v d\lambda_t-v(H)\lambda_t \nonumber\\
					&= i_v (Hd\lambda_t-dH\wedge\lambda_t) \nonumber\\
					&= \overline{d_{\F_t} H}-H\overline{\delta}_t, \nonumber
\end{align}
where $d\lambda_t=\overline{\delta}_t \wedge \lambda_t$ for some (uniquely determined) $\delta_t \in \Omega^1(\F_t)$. The existence of such $\delta_t$ is, of course, given by our assumption that $\F_t$ is a foliation for any $t$.

To summarize the above discussions, we have proved the following characterization of isotopies of foliations.

\begin{theorem} \label{thm:0order}
Suppose $\F_t$ is a $C^0$-small homotopy of foliations and $L$ is a fixed transverse line field. Let $\lambda_t=\lambda+\beta_t, \beta_0=0,$ be a path of defining one-forms such that $\beta_t(L)=0$ for all $t$. Then $\F_t$ is an isotopy of foliations if $[\dot{\lambda}_t]=[\dot{\beta}_t]=0 \in H^1_{tw} (\F_t)$ for all $t$. 
\end{theorem}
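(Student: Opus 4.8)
The plan is to prove sufficiency by a Moser-type argument carried out directly on $Y$, whose input is a primitive of $\dot\lambda_t$ furnished by the hypothesis. Write $\pi_t\colon\Omega^\bullet(Y)\to\Omega^\bullet(\F_t)$ for the restriction to $\F_t$, and recall that $d\lambda_t=\overline{\delta}_t\wedge\lambda_t$ for a unique $\delta_t\in\Omega^1(\F_t)$, so that $d_{\F_t}^{\lambda_t}H=d_{\F_t}H-H\delta_t$ for $H\in C^\infty(Y)$. The hypothesis $[\dot\lambda_t]=[\dot\beta_t]=0\in H^1_{tw}(\F_t)$ means precisely that $\pi_t(\dot\lambda_t)\in\Omega^1(\F_t)$ is $d_{\F_t}^{\lambda_t}$-exact, i.e. $\pi_t(\dot\lambda_t)=d_{\F_t}^{\lambda_t}G_t$ for some $G_t\in C^\infty(Y)$; I assume for the moment that the $G_t$ may be chosen smoothly in $t$, and return to this point below. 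Up to the reformulation via \autoref{thm:ModGauge}, the isotopy being built is the one obtained by integrating the zeroth-order contact Hamiltonian $\pi^\ast G_t$ on $T^\ast\F$ and restricting to $Y$, so that \eqref{eqn:isotopy_notime_invt} is the conceptual content; doing it on $Y$ has the advantage of sidestepping any appeal to uniqueness for the evolution equation governing $\lambda_t$.

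Here are the steps. Let $u_t$ be the section of $L$ with $\lambda_t(u_t)=1$ (legitimate since $L\pitchfork\F_t$), put $X_t:=-G_t\,u_t$, and let $\phi_t\colon Y\to Y$ be its flow, which is defined for all $t\in[0,1]$ since $Y$ is closed, with $\phi_0=\mathrm{id}$. Then $\frac{d}{dt}\phi_t^\ast\lambda_t=\phi_t^\ast\big(\mathcal{L}_{X_t}\lambda_t+\dot\lambda_t\big)$, and by Cartan's formula $\mathcal{L}_{X_t}\lambda_t=d(i_{X_t}\lambda_t)+i_{X_t}(\overline{\delta}_t\wedge\lambda_t)=-dG_t+\big(\overline{\delta}_t(X_t)\big)\lambda_t+G_t\overline{\delta}_t$. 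Restricting to $\F_t$ gives $\pi_t\big(\mathcal{L}_{X_t}\lambda_t+\dot\lambda_t\big)=-d_{\F_t}G_t+G_t\delta_t+\pi_t(\dot\lambda_t)=-d_{\F_t}^{\lambda_t}G_t+\pi_t(\dot\lambda_t)=0$, so $\mathcal{L}_{X_t}\lambda_t+\dot\lambda_t=c_t\lambda_t$ for the smooth function $c_t:=\big(\mathcal{L}_{X_t}\lambda_t+\dot\lambda_t\big)(u_t)$. Hence $\Lambda_t:=\phi_t^\ast\lambda_t$ solves the linear ODE $\dot\Lambda_t=(c_t\circ\phi_t)\,\Lambda_t$ with $\Lambda_0=\lambda$, whose solution is $\phi_t^\ast\lambda_t=g_t\lambda$ with $g_t:=\exp\!\big(\int_0^t c_s\circ\phi_s\,ds\big)>0$. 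Therefore $\phi_t^\ast\F_t=\ker(\phi_t^\ast\lambda_t)=\ker\lambda=\F$, which is exactly the assertion that $\F_t$ is an isotopy of foliations. (Equivalently, in the contact picture one takes $\psi_t$ to be the flow of $X_{\pi^\ast G_t}$ on $T^\ast\F$: by \eqref{eqn:isotopy_notime_invt} the path of defining forms swept out through $Y$ has derivative $\overline{d_{\F_t}^{\lambda_t}G_t}=\dot\lambda_t$ and starts at $\lambda$, hence agrees with $\lambda_t$, and one concludes by \autoref{thm:ModGauge}.)

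The one genuinely delicate point is the bracketed assumption above: upgrading the \emph{pointwise} vanishing of $[\dot\lambda_t]$ in $H^1_{tw}(\F_t)$ to a \emph{smooth} one-parameter family of primitives $G_t$. Because $\F_t$, and with it the complex $(\Omega^\bullet(\F_t),d_{\F_t}^{\lambda_t})$, varies with $t$, there is no fixed complex on which to run a leafwise Hodge argument directly, and this is where I expect the real work to lie. I would handle it by choosing a partition $0=t_0<\dots<t_N=1$ fine enough that on each interval $[t_{j-1},t_j]$ an ambient diffeomorphism identifies all the tangential data with that of $\F_{t_{j-1}}$ up to a smoothly varying perturbation of the differential on the fixed complex $(\Omega^\bullet(\F_{t_{j-1}}),d_{\F_{t_{j-1}}}^{\lambda_{t_{j-1}}})$, where a smooth family of primitives can be produced (for instance from a smoothly varying right inverse to the differential on its image, or a partition-of-unity argument over the parameter interval), and then concatenating the isotopies built on the subintervals. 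The remaining ingredients — completeness of $X_t$, solvability of the scalar ODE for $g_t$, and the Cartan-calculus identities above — are routine and I would only indicate them; all of the contact geometry needed is already contained in \eqref{eqn:isotopy_notime_invt} and \autoref{thm:ModGauge}.
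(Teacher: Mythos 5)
Your argument is correct, and it reaches the conclusion by a genuinely more direct route than the paper. The paper works in the contact thickening $T^\ast\F$: it computes in coordinates the contact Hamiltonian vector field of a zeroth-order Hamiltonian $H\in C^\infty(Y)$, derives the evolution law (\ref{eqn:isotopy_notime_invt}) for the induced path of defining forms, and then invokes \autoref{thm:ModGauge} to translate the resulting ambient contact isotopy into an isotopy of foliations; implicitly it also needs to know that the path of forms produced by the flow coincides with the given $\lambda_t$ (a uniqueness statement for the evolution equation that is not spelled out). You instead run the Moser argument entirely on $Y$: taking a primitive $G_t$ of $\pi_t(\dot\lambda_t)$ in the twisted complex, setting $X_t=-G_t u_t$ with $u_t\in\Gamma(L)$, $\lambda_t(u_t)=1$, and checking via Cartan's formula that $\mathcal{L}_{X_t}\lambda_t+\dot\lambda_t$ is a multiple of $\lambda_t$, so that $\phi_t^\ast\lambda_t=g_t\lambda$ by the scalar linear ODE. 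Your computations ($i_{X_t}\lambda_t=-G_t$, $\pi_t(\mathcal{L}_{X_t}\lambda_t)=-d_{\F_t}^{\lambda_t}G_t$, the identification of $c_t$ using $TY=T\F_t\oplus L$) are all correct, and the conclusion $\phi_t^\ast\F_t=\F$ is exactly the desired statement. What your route buys is self-containedness and the elimination of the uniqueness issue; what the paper's route buys is the explicit contact-Hamiltonian formula (\ref{eqn:isotopy_condition}), which is then reused for the higher-order approximations and motivates the ``only if'' conjecture. Note also that $X_t$ is, up to the sign coming from pulling back rather than pushing forward, precisely the $Y$-component of the paper's $X_H$ at the zero section, so the two constructions are consistent.

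The one point where neither argument is complete is the selection of the primitives $G_t$ (resp.\ $H_t$) \emph{smoothly} in $t$: the complex $(\Omega^\bullet(\F_t),d_{\F_t}^{\lambda_t})$ varies with $t$, exactness of $\pi_t(\dot\lambda_t)$ for each fixed $t$ does not by itself produce a smooth family of primitives, and the image of $d_{\F_t}^{\lambda_t}$ need not even be closed in any reasonable topology. You are right to flag this explicitly; the paper passes over it in silence, and your proposed fix (subdividing $[0,1]$ and trivializing the family of complexes over short parameter intervals) is plausible but would still require a smoothly varying right inverse on the image, which is the real content. Since the published argument carries the same unaddressed subtlety, I do not count this against your proof, but it should be stated as a hypothesis or resolved before either argument is considered airtight.
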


\begin{remark}
Although by \autoref{lem:well-defined-tangential}, the isomorphism class of $H^1_{tw}(\F_t)$ is independent of the choice of the defining one-form $\F=\ker\lambda_t$, we have to pick the presentative using $\lambda_t=\lambda+\beta_t$ in the above theorem.
\end{remark}

\subsubsection{Higher-order approximations}
Let us first consider the first-order approximation. Namely, consider a (time-dependent) tangential vector field $w \in \Gamma(T\F)$. Then $w$ naturally gives rise to a function $H_w$ on $T^\ast\F$ by the canonical pairing between $T\F$ and $T^\ast\F$, which is linear in the cotangent directions. In local coordinates as before, let us write $w=w^i \p_{q^i}$. Then we have $H_w=p_i w^i$. In this case, (\ref{eqn:isotopy_condition}) takes the following form
\begin{equation} \label{eqn:1orderP}
	\dot{p}_i = -\frac{\p p_i}{\p q^k} w^k-p_k \frac{\p w^k}{\p q^i}-\frac{1}{f} (p_k \frac{\p w^k}{\p x}+R^k p_j \frac{\p w^j}{\p q^k}-(\frac{\p f}{\p q^j}+p_k \frac{\p R^k}{\p q^j})w^j) p_i.
\end{equation}

Again by plugging (\ref{eqn:1orderP}) into (\ref{eqn:lambda_t}), we have
\begin{align} \label{eqn:isotopy_1order}
	\dot{\lambda} &= (R^k \dot{p}_k) dx-\dot{p}_i dq^i \\
				  &= \frac{1}{f} (p_k \frac{\p w^k}{\p x}+R^k p_j \frac{\p w^j}{\p q^k}-(\frac{\p f}{\p q^j}+p_k \frac{\p R^k}{\p q^j})w^j) (p_idq^i-(f+R^kp_k)dx) \nonumber \\
				  &\hspace{5mm} +(p_i \frac{\p w^i}{\p x}-\frac{\p (f+R^kp_k)}{\p q^i} w^i)dx+(\frac{\p p_i}{\p q^k} w^k+p_k \frac{\p w^k}{\p q^i}) dq^i \nonumber
\end{align}

After some calculations, one realizes that (\ref{eqn:isotopy_1order}) is equivalent to the following invariant form.
\begin{align*}
	\dot{\lambda}_t &= \lambda_t ([v,w]) \lambda_t-\mathcal{L}_w \lambda_t \\
				&= i_v (\mathcal{L}_w \lambda_t \wedge \lambda_t) \nonumber\\
				&= \lambda_t(w)\overline{\delta}_t-\overline{d_{\F_t} (\lambda_t(w))} \nonumber
\end{align*}

We observe that the first-order approximation does not give us anything new besides the result we obtained by looking at the 0$^{th}$-order approximation. This is because we can simply take the contact Hamiltonian function to be $H=\lambda_t(w)$. In fact, as we will see right below, the higher-order approximations do not any more information either.

Consider a multi-tangential-vector field $w_1 \otimes \cdots \otimes w_r \in \otimes^r T\F$, which in turns specifies a function $H \in C^\infty(T^\ast\F)$, homogeneous of degree $r$ in the cotangent directions. In local coordinates, we can write $H=w_1^{i_1} \cdots w_r^{i_r} p_{i_1} \cdots p_{i_r}$ where $w_j=w_j^k \p_{q^k}$ for any $j \in \{1,\cdots,r\}$. Using the notations from above, it is easily seen that $H=H_{w_1} \cdots H_{w_r}$. Playing the same game as before gives us the time-$t$ evolution of $\lambda_t$ as follows.
\begin{equation*}
	\dot{\lambda}_t = (\prod_{i=1}^r \lambda_t(w_i)) \overline{\delta}_t - \overline{d_{\F_t} (\prod_{i=1}^r \lambda_t(w_i))}.
\end{equation*}

So it is reasonable to form the following conjecture.
\begin{conj}
Suppose $\F_t$ is a $C^0$-small homotopy of foliations and $L$ is a fixed transverse line field. Let $\lambda_t=\lambda+\beta_t, \beta_0=0,$ be a path of defining one-forms such that $\beta_t(L)=0$ for all $t$. Then $\F_t$ is an isotopy of foliations if and only if $[\dot{\lambda}_t]=[\dot{\beta}_t]=0 \in H^1_{tw} (\F_t)$ for all $t$. 
\end{conj}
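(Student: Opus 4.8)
The plan is to establish the ``only if'' direction, since ``if'' is exactly \autoref{thm:0order}. Suppose then that $\F_t$ is an isotopy of foliations: fix a smooth isotopy $\phi_t\colon Y\to Y$ with $\phi_0=\mathrm{id}$ and $\phi_t^\ast\F_t=\F$, generated by the time-dependent vector field $V_t$. Because $\phi_t^\ast\lambda_t$ is then a defining one-form for $\phi_t^\ast\F_t=\F$, we may write $\phi_t^\ast\lambda_t=h_t\,\lambda$ for a smooth family of positive functions $h_t$ with $h_0=1$; likewise $(\phi_t^{-1})^\ast\lambda$ is a defining one-form for $(\phi_t^{-1})^\ast\F=\F_t$, so $(\phi_t^{-1})^\ast\lambda=k_t\lambda_t$ for a smooth positive family $k_t$.

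First I would differentiate $\phi_t^\ast\lambda_t=h_t\lambda$ in $t$: using $\frac{d}{dt}\big(\phi_t^\ast\lambda_t\big)=\phi_t^\ast\big(\mathcal{L}_{V_t}\lambda_t+\dot\lambda_t\big)$ and then applying $(\phi_t^{-1})^\ast$, the two observations above yield an identity of the form
\[
	\mathcal{L}_{V_t}\lambda_t+\dot\lambda_t=\rho_t\,\lambda_t
\]
for a smooth family of functions $\rho_t$. Next I would expand the left side by Cartan's formula and the Frobenius relation $d\lambda_t=\mu_t\wedge\lambda_t$, getting $\mathcal{L}_{V_t}\lambda_t=d\big(\lambda_t(V_t)\big)+\mu_t(V_t)\,\lambda_t-\lambda_t(V_t)\,\mu_t$. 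Substituting, solving for $\dot\lambda_t$, and restricting to $\F_t$ (applying $\pi_t$, which annihilates every multiple of $\lambda_t$), all that survives is
\[
	\pi_t(\dot\lambda_t)=\lambda_t(V_t)\,\pi_t(\mu_t)-d_{\F_t}\big(\lambda_t(V_t)\big)=-\,d_{\F_t}^{\lambda_t}\big(\lambda_t(V_t)\big),
\]
which displays $\pi_t(\dot\lambda_t)$ as a $d_{\F_t}^{\lambda_t}$-coboundary. Hence $[\dot\lambda_t]=0\in H^1_{tw}(\F_t)$ for all $t$; by \autoref{lem:well-defined-tangential} this vanishing is insensitive to the choice of defining one-form, so it holds for the normalized path $\lambda_t=\lambda+\beta_t$, $\beta_t(L)=0$, from the statement, and together with \autoref{thm:0order} this closes the equivalence.

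The more contact-geometric route suggested by the computations of this section would instead lift $\F_t$, via \autoref{thm:ModGauge}, to a contact isotopy $\psi_t$ of $T^\ast\F$ with $\psi_t(Y)=Y_t$, write $\psi_t$ as the flow of a contact Hamiltonian $H_t\in C^\infty(T^\ast\F)$, and read $\dot\lambda_t$ off from \eqref{eqn:isotopy_condition}; for $H_t$ homogeneous in the cotangent directions the formulas already displayed show $\dot\lambda_t$ differs from $-\overline{d_{\F_t}^{\lambda_t}(H_t|_{Y_t})}$ by a multiple of $\lambda_t$, hence is a coboundary. I expect the main obstacle along this route to be the passage to a general smooth $H_t$, which is only a (typically non-convergent) formal sum of homogeneous pieces: one would have either to run the local identity \eqref{eqn:isotopy_condition} for arbitrary $H_t$ or to control $\dot\lambda_t$ under truncation of the fiberwise Taylor expansion. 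The elementary argument above sidesteps this; the only care it requires is the routine verification that $\phi_t$ and $V_t$ (and hence the primitive $-\lambda_t(V_t)$) depend smoothly on $t$, together with the normalization-independence just invoked.
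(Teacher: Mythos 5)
The first thing to say is that the paper offers no proof of this statement: it is stated as a conjecture, with only the ``if'' direction established as \autoref{thm:0order} and the ``only if'' direction supported merely by the observation that contact Hamiltonians homogeneous of each fixed degree in the fibre produce twisted coboundaries. Your argument for the ``only if'' direction is therefore not a variant of anything in the paper---it is an actual proof of the direction the paper leaves open, and it checks out. Differentiating $\phi_t^\ast\lambda_t=h_t\lambda$, using $d\lambda_t=\mu_t\wedge\lambda_t$, and restricting to $T\F_t$ gives $\pi_t(\dot\lambda_t)=\lambda_t(V_t)\,\pi_t(\mu_t)-d_{\F_t}(\lambda_t(V_t))=-d^{\lambda_t}_{\F_t}(\lambda_t(V_t))$ exactly as you say; this is the time-dependent globalization of the paper's own proof of \autoref{cor:infini}, and it neatly sidesteps the difficulty the author runs into on the contact side (a general contact Hamiltonian on $T^\ast\F$ is not a finite sum of fibrewise-homogeneous pieces), because the primitive $-\lambda_t(V_t)$ is produced explicitly and is automatically smooth in $t$. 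One small simplification: since the identity $\phi_t^\ast\lambda_t=h_t\lambda$ already holds for the normalized path $\lambda_t=\lambda+\beta_t$, you can run the computation with that representative from the start and drop the appeal to \autoref{lem:well-defined-tangential}---which as stated is slightly too quick anyway, since that lemma only gives an isomorphism of cohomologies, and one still has to observe that under a rescaling $\lambda_t'=g_t\lambda_t$ one has $\pi_t(\dot\lambda_t')=g_t\,\pi_t(\dot\lambda_t)$, which is precisely the image of $\pi_t(\dot\lambda_t)$ under the chain map $\Phi$.

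The caveat is that your equivalence rests entirely on \autoref{thm:0order} for the ``if'' direction, and the paper's ``proof'' of that theorem is really only the derivation of the evolution equation (\ref{eqn:isotopy_notime_invt}) under a fibrewise-constant Hamiltonian flow: it does not address how to choose primitives $H_t$ of $-\pi_t(\dot\lambda_t)$ smoothly in $t$ (the kernel of $d^{\lambda_t}_{\F_t}$ on functions can jump with $t$, and the tangential complex is not elliptic), nor why the flow so generated reproduces the given path $\F_t$ rather than some other path with the same infinitesimal data. Your proposal inherits whatever gap is there. Note that the same Lie-derivative identity you use would run the Moser argument in reverse and make the whole thing self-contained: a one-form on $Y$ vanishes on $T\F_t$ if and only if it is a multiple of $\lambda_t$, so solving $d^{\lambda_t}_{\F_t}g_t=-\pi_t(\dot\lambda_t)$ and setting $V_t=g_t v_t$ with $v_t\in L$, $\lambda_t(v_t)=1$, yields $\mathcal{L}_{V_t}\lambda_t+\dot\lambda_t\equiv 0$ modulo $\lambda_t$---but the smooth-in-$t$ selection of $g_t$ is then the one genuinely unresolved point, and it is presumably why the author states this as a conjecture rather than a theorem.
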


\section{Deformation of coisotropic submanifolds of arbitrary dimension} \label{sec:generalcoiso}

In this section, we consider the deformation theory of a $(n+1+k)$-dimensional coisotropic submanifold $Y^{n+1+k} \subset (M^{2n+1},\xi)$ for any $k \geq 0$. Setting $k=0$, we will partially recover the results for Legendrian foliations. In fact, as we will see, the deformation picture is considerably more involved when $k \geq 1$. Choose a contact form $\alpha$ and let $\lambda = \alpha|_Y \in \Omega^1(Y)$. Recall from \cite[Section 2.3]{H2013} that the {\em characteristic foliation} $\F = \ker(\lambda \wedge d\lambda)$ is a codimension $(2k+1)$ foliation on $Y$. As before, we will assume for the rest of this section that $\F$ is nonsingular.

\subsection{The contact master equation for coisotropic deformations} \label{subsec:MasterCoiso}
Given a coisotropic submanifold $Y \subset (M,\xi)$ with characteristic foliation $\F$ as above. Fix a transverse $(2k+1)$-dimensional distribution $G$ such that
\begin{equation} \label{eqn:split}
	TY=G \oplus T\F.
\end{equation}

To carry out explicit calculations, let us choose a foliated chart $\U \cong \RR^{n+1+k}_{\mathbf{y},\mathbf{q}}$ in $Y$ such that the leaves of $\F$ are defined by $\{\mathbf{q}=\text{const}\}$. Here we use the shorthand notation $\mathbf{y}=(y^1,\cdots,y^{2k+1})$ and $\mathbf{q}=(q^1,\cdots,q^{n-k})$. Locally we can write\footnote{Since we will be running out of Roman letters, we decide to use Greek letters as upper and lower indices for the rest of the paper. Of course the index $\alpha$ has nothing to do with the contact form $\alpha$.} $$G=\vspan \big\{ \frac{\p}{\p y^i}+R^\alpha_i \frac{\p}{\p q^\alpha} \big\}_{1 \leq i \leq 2k+1},$$ and $\lambda=a_i dy^i$ where $a_i \in C^\infty(\U)$ are smooth functions.

Just as in the case of nonsingular Legendrian foliations, we can write down a canonical contact model in a tubular neighborhood of $Y \subset (M,\xi)$ as follows. Consider the vector bundle
\begin{center}
$\begin{CD}
T^\ast\F @>>> E  @>\pi>> Y,
\end{CD}$
\end{center}
where the fibers are tangential cotangent spaces. Let $\Pi: TY \to T\F$ be the projection associated to the splitting (\ref{eqn:split}), and let $\mathbf{p}=(p_1,\cdots,p_{n-k})$ be the coordinates on the fiber $T^\ast\F$ dual to $\mathbf{q}$. Then we have the standard contact form $\alpha=\pi^\ast \lambda-\eta$ on $E$, where $\eta$ is defined  for any $v \in T_{(y,e)} E$, $y \in Y$ and $e \in T^\ast\F$, by
\begin{equation*}
	\eta(v)= \langle e,\Pi(\pi_\ast(v)) \rangle,
\end{equation*}
where $\langle~,~\rangle$ denotes the pairing between $T^\ast\F$ and $T\F$. In local coordinates we have
\begin{equation*}
	\eta=p_\alpha dq^\alpha-(p_\alpha R^\alpha_i) dy^i.
\end{equation*}
For simplicity of notations we will not distinguish $\lambda$ and $\pi^\ast\lambda$ hereafter. So we have
\begin{equation*}
	\alpha=\lambda+(p_\alpha R^\alpha_i) dy^i-p_\alpha dq^\alpha=(a_i+p_\alpha R^\alpha_i) dy^i-p_\alpha dq^\alpha.
\end{equation*}
One can easily show that the above defined $\alpha$ is contact in a sufficiently small neighborhood of the zero section. Moreover this contact form is canonical in the sense that it satisfies a Weistein-type neighborhood theorem similar to \cite[Lemma 3.2]{H2013}. The details are left to the interested readers.

Let $\mathbf{a}=(a_1,\cdots,a_{2k+1}) \in \RR^{2k+1}$ be the nonzero vector corresponding to the defining one-form $\lambda$, and consider the $2k$-dimensional subspace $\mathbf{a}^\bot=\{\mathbf{b} \in \RR^{2k+1}~|~\mathbf{a} \cdot \mathbf{b}=0\}$. Fix a basis $\mathbf{a}^\bot=\vspan \{\mathbf{b}_1,\cdots,\mathbf{b}_{2k}\}$, where $\mathbf{b}_i=(b_{i1},\cdots,b_{i,2k+1})$ for any $1 \leq i \leq 2k$. For $|\mathbf{p}|$ small enough, we notice that the following $(2k+1) \times (2k+1)$-matrix 
\begin{equation*}
\Phi= (\Phi_{ij})=
\begin{bmatrix}
	a_1+p_\alpha R^\alpha_1 & a_2+p_\alpha R^\alpha_2 & \ldots & a_{2k+1}+p_\alpha R^\alpha_{2k+1}  \\
	b_{11} & b_{12} & \ldots & b_{1,2k+1} \\
	\vdots & \vdots & \ddots & \vdots \\
	b_{2k,1} & b_{2k,2} & \ldots & b_{2k,2k+1}
\end{bmatrix}
\end{equation*}
is nonsingular, and we will denote $\Phi^{-1}=(\Phi^{ij})$ the inverse matrix. It is straightforward to check that
\begin{align*}
	\ker\alpha= & \vspan \big\{ \frac{\p}{\p p_\alpha} \big\}_{1 \leq \alpha \leq n-k} \oplus \vspan \big\{ b_{ij} (\frac{\p}{\p y^j}+ R^\gamma_j \frac{\p}{\p q^\gamma}) \big\}_{1 \leq i \leq 2k} \\
			   & \oplus \vspan \big\{ \frac{\p}{\p q^\beta}+p_\beta \Phi^{i1} \frac{\p}{\p y^i} \big\}_{1 \leq \beta \leq n-k}.
\end{align*}

For simplicity of notations, let us write 
\begin{equation} \label{eqn:basevector}
\mathfrak{e}_\alpha = \frac{\p}{\p p_\alpha}, \quad \mathfrak{f}_\beta= \frac{\p}{\p q^\beta}, \quad \mathfrak{g}_i=b_{ij} (\frac{\p}{\p y^j}+ R^\gamma_j \frac{\p}{\p q^\gamma})
\end{equation}
for the basis on $\ker\lambda$. 

Now let us take a detour to consider the ``transverse geometry'' of the characteristic foliation, which is analogous to the discussions in \cite{OP2005}. Namely, we define the transverse curvature on $G' := G \cap \ker\lambda=\vspan \{\mathfrak{g}_1,\cdots,\mathfrak{g}_{2k}\}$
\begin{equation*}
F_{G'}: G' \times G' \to \F
\end{equation*}
by $F_{G'} (v_1,v_2)=\Pi([v_1,v_2])$ for any $v_1,v_2 \in G'$, where $\Pi:TY \to T\F$ is the projection with respect to (\ref{eqn:split}). Using the above basis we have the following formula, which will be useful later.
\begin{align} \label{eqn:curvature}
F_G(\mathfrak{g}_i,\mathfrak{g}_j) &= \Pi \big( b_{is}(\frac{\p}{\p y^s}+R^\alpha_s \frac{\p}{\p q^\alpha}) (b_{jt}(\frac{\p}{\p y^t}+R^\beta_t \frac{\p}{\p q^\beta}))-b_{jt}(\frac{\p}{\p y^t}+R^\beta_t \frac{\p}{\p q^\beta}) (b_{is}(\frac{\p}{\p y^s}+R^\alpha_s \frac{\p}{\p q^\alpha})) \big) \nonumber\\
	&= b_{is} b_{jt} (\frac{\p R^\alpha_t}{\p y^s}+R^\beta_s \frac{\p R^\alpha_t}{\p q^\beta}-\frac{\p R^\alpha_s}{\p y^t}-R^\beta_t \frac{\p R^\alpha_s}{\p q^\beta}) \frac{\p}{\p q^\alpha} =: F^\alpha_{ij} \mathfrak{f}_\alpha
\end{align}

Then we can lift (\ref{eqn:basevector}) to a basis on $\ker \alpha$ as follows.
\begin{align*}
	 e_\alpha &= \Psi^{\gamma\alpha} \frac{\p}{\p p_\gamma}, \qquad \qquad  f^\beta = \frac{\p}{\p q^\beta}+p_\beta \Phi^{i1} \frac{\p}{\p y^i}+\Psi^{\eta\sigma} \frac{\p \Phi_{1j}}{\p q^\beta} p_\sigma \Phi^{j1} \frac{\p}{\p p_\eta}, \\
	 g_i &= b_{ij} \big( \frac{\p}{\p y^j}+ R^\gamma_j \frac{\p}{\p q^\gamma}+\Psi^{\eta \sigma}(p_\sigma \Phi^{s1} \frac{\p \Phi_{1s}}{\p y^j}-p_\sigma \Phi^{s1} \frac{\p \Phi_{1j}}{\p y^s}-\frac{\p \Phi_{1j}}{\p q^\sigma}+p_\sigma \Phi^{s1} R^\gamma_j \frac{\p \Phi_{1s}}{\p q^\gamma}) \frac{\p}{\p p_\eta} \big),
\end{align*}
where $\Psi^{-1}=(\Psi^{\alpha \beta})$ is the inverse matrix of
\begin{equation*}
\Psi= (\Psi_{\alpha \beta})_{(n-k)\times(n-k)}= I-(p_\alpha \Phi^{i1} R^\beta_i),
\end{equation*}
whose invertibility is ensured by assuming $|\mathbf{p}|$ is small.

The precise form of the lifting is quite messy, but the following lemma shows that they form a nice basis with respect to the symplectic form $d\alpha$ on $\ker \alpha$.

\begin{lemma}
The basis on $\ker\alpha=\vspan\{ e_1\cdots,e_{n-k},f^1,\cdots,f^{n-k},g_1,\cdots,g_{2k} \}$ satisfies the following properties
	\begin{itemize}
		\item $d\alpha(e_\alpha,e_\beta)=d\alpha(f^\alpha,f^\beta)=0$ for all $1 \leq \alpha,\beta \leq n-k$,
		\item $d\alpha(e_\alpha,f^\beta)=\delta_\alpha^\beta$ for all $1 \leq \alpha,\beta \leq n-k$, where $\delta$ is the Kronecker delta,
		\item $d\alpha(e_\alpha,g_i)=d\alpha(f^\alpha,g_i)=0$ for all $1 \leq \alpha \leq n-k$ and $1 \leq i \leq 2k$,
		\item $d\alpha(g_i,g_j)=d\lambda(\mathfrak{g}_i,\mathfrak{g}_j)+F^\alpha_{ij} p_\alpha$ for all $1 \leq i,j \leq 2k+1$, where $F^\alpha_{ij}$ is defined by (\ref{eqn:curvature}).
	\end{itemize}
\end{lemma}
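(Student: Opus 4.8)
The plan is to verify the four bullet points by short local computations organized around the identity $\alpha=\pi^\ast\lambda-\eta$, hence $d\alpha=\pi^\ast d\lambda-d\eta$, together with the fact that $\eta$ is ``tautological''. Concretely, set $\theta^\gamma:=dq^\gamma-R^\gamma_i\,dy^i$; then $\eta=p_\gamma\theta^\gamma$, and $\{dy^i\}\cup\{\theta^\gamma\}$ is the coframe on $Y$ (pulled back to $E$) dual to the adapted frame $\{\p_{y^i}+R^\alpha_i\p_{q^\alpha}\}\cup\{\p_{q^\gamma}\}$ of the splitting (\ref{eqn:split}). Consequently $\theta^\gamma$ vanishes on all of $G$ and on every $\pi$-vertical vector, while $\theta^\gamma(\p_{q^\beta})=\delta^\gamma_\beta$; moreover $d\eta=dp_\gamma\wedge\theta^\gamma+p_\gamma\,d\theta^\gamma$ with $d\theta^\gamma$ basic. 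I will indicate which of these inputs is responsible for each bullet.

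First I would dispose of every pairing involving at least one $\pi$-vertical vector. Since $\pi^\ast d\lambda$, $\theta^\gamma$ and $d\theta^\gamma$ all annihilate vertical vectors, for any vertical $w=w^\gamma\p_{p_\gamma}$ and any $X$ one gets $d\alpha(w,X)=-(dp_\gamma\wedge\theta^\gamma)(w,X)=-w^\gamma\theta^\gamma(X)$. Taking $w=e_\alpha=\Psi^{\gamma\alpha}\p_{p_\gamma}$ and $X=e_\beta$, $g_i$, $f^\beta$ in turn, and using $\theta^\gamma(e_\beta)=0$, $\theta^\gamma(g_i)=0$ (because $\pi_\ast g_i=\mathfrak{g}_i\in G$ and the correction term of $g_i$ is vertical), and $\theta^\gamma(f^\beta)=\delta^\gamma_\beta-p_\beta\Phi^{i1}R^\gamma_i=\Psi_{\beta\gamma}$, one reads off $d\alpha(e_\alpha,e_\beta)=0$, $d\alpha(e_\alpha,g_i)=0$, and $d\alpha(e_\alpha,f^\beta)=\pm\Psi^{\gamma\alpha}\Psi_{\beta\gamma}=\pm\delta_\alpha^\beta$ from $\Psi\Psi^{-1}=I$, the overall sign being fixed by the orientation convention for the symplectic structure on $\ker\alpha$; this is exactly the step that uses the normalization $e_\alpha=\Psi^{\gamma\alpha}\p_{p_\gamma}$. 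This settles the first three bullet points apart from $d\alpha(f^\alpha,f^\beta)=0$ and $d\alpha(f^\alpha,g_i)=0$.

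Next the $g_i$--$g_j$ pairing, which is the one that registers the transverse curvature. Here $\pi_\ast g_i=\mathfrak{g}_i$, so $\pi^\ast d\lambda(g_i,g_j)=d\lambda(\mathfrak{g}_i,\mathfrak{g}_j)$, and $\theta^\gamma(g_i)=0$ kills the $dp_\gamma\wedge\theta^\gamma$ part of $d\eta$, leaving $d\alpha(g_i,g_j)=d\lambda(\mathfrak{g}_i,\mathfrak{g}_j)-p_\gamma\,d\theta^\gamma(\mathfrak{g}_i,\mathfrak{g}_j)$. Since $d\theta^\gamma$ is basic and $\theta^\gamma$ vanishes on $G$, Cartan's formula gives $d\theta^\gamma(\mathfrak{g}_i,\mathfrak{g}_j)=-\theta^\gamma([\mathfrak{g}_i,\mathfrak{g}_j])=-\theta^\gamma(\Pi[\mathfrak{g}_i,\mathfrak{g}_j])=-F^\gamma_{ij}$ by the definition (\ref{eqn:curvature}) of $F_{G'}$. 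Hence $d\alpha(g_i,g_j)=d\lambda(\mathfrak{g}_i,\mathfrak{g}_j)+F^\alpha_{ij}p_\alpha$, which is the fourth bullet.

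The remaining identities $d\alpha(f^\alpha,f^\beta)=0$ and $d\alpha(f^\alpha,g_i)=0$ are where I expect the bookkeeping to be heaviest, since both entries now carry nontrivial vertical corrections; conceptually those corrections are engineered precisely so as to cancel the ``principal'' contributions. I would split $f^\beta=\hat f^\beta+c^\beta$ and $g_i=\hat g_i+c_i$ with $\hat f^\beta=\p_{q^\beta}+p_\beta\Phi^{i1}\p_{y^i}$ and $\hat g_i=b_{ij}(\p_{y^j}+R^\gamma_j\p_{q^\gamma})$, each of which one checks directly already lies in $\ker\alpha$, and $c^\beta$, $c_i$ the $\p_p$-corrections. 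Then the correction--correction terms vanish (both factors vertical); the principal--correction cross terms are evaluated by the vertical formula of the second paragraph using $\theta^\gamma(\hat f^\beta)=\Psi_{\beta\gamma}$ and $\theta^\gamma(\hat g_i)=0$; and the principal--principal terms are computed from $d\alpha(A,B)=-\alpha([A,B])$ (valid since $\alpha(A)\equiv\alpha(B)\equiv0$), the brackets $[\hat f^\alpha,\hat f^\beta]$ and $[\hat f^\alpha,\hat g_i]$ being handled with the identities obtained by differentiating $\sum_j\Phi_{1j}\Phi^{j1}=1$ and the analogous relations among the entries of $\Phi^{-1}$. One finally checks that the cross terms exactly cancel $d\alpha(\hat f^\alpha,\hat f^\beta)$, respectively $d\alpha(\hat f^\alpha,\hat g_i)$, leaving $0$. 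The only genuine obstacle is not drowning in indices: structurally the whole lemma is forced by the two facts flagged above, namely that $d\alpha$ vanishes on pairs of vertical vectors and that $\theta^\gamma$ trivializes on $G$ together with the vertical distribution.
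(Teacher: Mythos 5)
Your proof is correct, but it takes a genuinely different route from the paper's. The paper expands $d\alpha$ fully in the coordinates $(\mathbf{y},\mathbf{q},\mathbf{p})$ and verifies the identities by direct index manipulation (it carries out $e$--$e$, $f$--$f$ and $e$--$f$ explicitly and leaves $e$--$g$, $f$--$g$ and $g$--$g$ to the reader). You instead exploit the structure $d\alpha=\pi^\ast d\lambda-dp_\gamma\wedge\theta^\gamma-p_\gamma\,d\theta^\gamma$ with the adapted coframe $\theta^\gamma=dq^\gamma-R^\gamma_i\,dy^i$ dual to the splitting (\ref{eqn:split}); this reduces every pairing with a vertical entry to the one-line formula $d\alpha(w,X)=-w^\gamma\theta^\gamma(X)$, and --- the real payoff --- it produces the curvature term in the fourth bullet conceptually, via $d\theta^\gamma(\mathfrak{g}_i,\mathfrak{g}_j)=-\theta^\gamma([\mathfrak{g}_i,\mathfrak{g}_j])=-F^\gamma_{ij}$, which is precisely the item the paper's proof omits. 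Your treatment also makes transparent why $\Psi$ is the correct normalization for $e_\alpha$ (since $\theta^\gamma(f^\beta)=\Psi_{\beta\gamma}$). The two identities $d\alpha(f^\alpha,f^\beta)=0$ and $d\alpha(f^\alpha,g_i)=0$ remain at sketch level in your write-up, but the strategy you give (split off the vertical corrections, use $d\alpha(A,B)=-\alpha([A,B])$ on the principal parts, which do lie in $\ker\alpha$ since $\alpha(\hat f^\beta)=-p_\beta+p_\beta\Phi^{i1}\Phi_{1i}=0$ and $\alpha(\hat g_i)=b_{ij}a_j=0$) is sound and no less complete than the paper's own ``left to the reader.'' One remark: your careful hedging on the sign of $d\alpha(e_\alpha,f^\beta)$ is warranted --- both your computation and the paper's own yield $-\delta_\alpha^\beta$, whereas the lemma as stated asserts $+\delta_\alpha^\beta$, so there is a sign inconsistency internal to the paper, not an error on your part.
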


\begin{proof}
We first note that
$$d\alpha=\frac{\p \Phi_{1j}}{\p y^i} dy^i \wedge dy^j+\frac{\p \Phi_{1i}}{\p q^\alpha} dq^\alpha \wedge dy^i+R^\alpha_i dp_\alpha \wedge dy^i-dp_\alpha \wedge dq^\alpha$$

The relation $d\alpha(e_\alpha,e_\beta) = 0$ is obvious, so we proceed to check that $d\alpha(f^\alpha,f^\beta) = 0$ as follows. For simplicity of notations, we temporarily write $A_{\alpha \beta}=\Psi^{\beta \gamma} p_\gamma \Phi^{i1} \frac{\p \Phi^{1i}}{\p q^\alpha}$. We have
\begin{align*}
d\alpha(f^\alpha,f^\beta) &= A_{\beta \alpha}-A_{\alpha \beta}+\frac{\p \Phi_{1i}}{\p q^\alpha} p_\beta \Phi^{i1}-\frac{\p \Phi_{1i}}{\p q^\beta} p_\alpha \Phi^{i1}-p_\alpha \Phi^{i1} R^\sigma_i A_{\beta \sigma}+p_\beta \Phi^{i1} R^\sigma_i A_{\alpha \sigma} \\
	&= A_{\beta \sigma} (\delta^\sigma_\alpha-p_\alpha \Phi^{i1} R^\sigma_i)-A_{\alpha \sigma}(\delta^\sigma_\beta-p_\beta \Phi^{i1} R^\sigma_i)+\frac{\p \Phi_{1i}}{\p q^\alpha} p_\beta \Phi^{i1}-\frac{\p \Phi_{1i}}{\p q^\beta} p_\alpha \Phi^{i1} \\
	&= A_{\beta \sigma} \Psi_{\alpha \sigma}- A_{\alpha \sigma} \Psi_{\beta \sigma}+ \frac{\p \Phi_{1i}}{\p q^\alpha} p_\beta \Phi^{i1}-\frac{\p \Phi_{1i}}{\p q^\beta} p_\alpha \Phi^{i1}=0
\end{align*}

Next we check
\begin{align*}
d\alpha(e_\alpha,f^\beta) &= \Psi^{\beta \gamma} p_\gamma \Phi^{i1} R^\alpha_i-\Psi^{\beta \alpha} \\
	&=\Psi^{\beta \gamma} \big( p_\gamma \Phi^{i1} R^\alpha_i-\delta_\gamma^\alpha \big)= -\delta_\alpha^\beta.
\end{align*}

We leave the details of checking the remaining part of the lemma to the interested reader.
\end{proof}

Let $s: Y \to E$ be a section, or equivalently, a tangential one-form in $\Omega^1(\F)$. We assume that $s$ is sufficiently close to the zero section where the contact structure is defined. In local coordinates as above, we can write the section as $p_\alpha=s_\alpha(\mathbf{y},\mathbf{q})$ for $1 \leq \alpha \leq n-k$. Let $Y_s$ be the graph of $s$ in $E$. Then its tangent space is spanned by
\begin{equation*}
TY_s=\big\{\frac{\p}{\p y^i}+\frac{\p s_\beta}{\p y^i} \frac{\p}{\p p_\beta} \big\}_{1 \leq i \leq 2k+1} \oplus \big\{ \frac{\p}{\p q^\alpha}+\frac{\p s_\gamma}{\p q^\alpha} \frac{\p}{\p p_\gamma} \big\}_{1 \leq \alpha \leq n-k}.
\end{equation*}
Then one can easily check that
\begin{align*}
TY_s \cap \xi &= \big\{ s_\alpha \Phi^{i1}(\frac{\p}{\p y^i}+\frac{\p s_\beta}{\p y^i} \frac{\p}{\p p_\beta})+\frac{\p}{\p q^\alpha}+\frac{\p s_\beta}{\p q^\alpha} \frac{\p}{\p p_\beta} \big\}_{1 \leq \alpha \leq n-k} \\
	&\quad \oplus \big\{ b_{ij}(\frac{\p}{\p y^j}+\frac{\p s_\beta}{\p y^j} \frac{\p}{\p p_\beta}+R^\gamma_j(\frac{\p}{\p q^\gamma}+\frac{\p s_\beta}{\p q^\gamma} \frac{\p}{\p p_\beta})) \big\}_{1 \leq i \leq 2k} \\
	&= \big\{ f^\alpha+\mathcal{A}_{\alpha \beta} e_\beta \big\}_{1 \leq \alpha \leq n-k} \oplus \big\{ g_i+\mathcal{B}_{i\alpha} e_\alpha \big\}_{1 \leq i \leq 2k},
\end{align*}
where
\begin{align}
	\mathcal{A}_{\alpha \beta} &= (s_\alpha \Phi^{i1} \frac{\p s_\gamma}{\p y^i}+\frac{\p s_\gamma}{\p q^\alpha}) \Psi_{\beta\gamma}-(\frac{\p a_i}{\p q^\alpha}+s_\gamma \frac{\p R^\gamma_i}{\p q^\alpha}) s_\beta \Phi^{i1}, \text{ and} \label{eqn:A's} \\
	\mathcal{B}_{i\alpha} &= b_{ij} \big( (\frac{\p s_\gamma}{\p y^j}+R^\nu_j \frac{\p s_\gamma}{\p q^\nu}) \Psi_{\alpha\gamma} + s_\alpha \Phi^{s1} (\frac{\p \Phi_{1j}}{\p y^s}-\frac{\p \Phi_{1s}}{\p y^j}) +\frac{\p \Phi_{1j}}{\p q^\alpha} - s_\alpha R^\nu_j \Phi^{s1} \frac{\p \Phi_{1s}}{\p q^\nu} \big) \label{eqn:B's}
\end{align}

With all the preparations above, the following characterization of coisotropic deformation now follows directly from \cite[Proposition 2.2]{OP2005}. 

\begin{prop} \label{prop:coiso_master}
The graph $Y_s$ of a section $s: Y \to E$, viewed as a one-form in $\Omega^1(\F)$, is coisotropic if and only if the following equation holds.
\begin{equation} \label{eqn:coiso_master}
	\mathcal{A}_{\beta\alpha}-\mathcal{A}_{\alpha\beta} = \mathcal{B}_{i\alpha} \omega^{ij} \mathcal{B}_{j\beta},
\end{equation}
where $(\omega^{ij})$ is the inverse matrix to the matrix $(d\lambda(\mathfrak{g}_i,\mathfrak{g}_j)+F^\alpha_{ij}s_\alpha)$.
\end{prop}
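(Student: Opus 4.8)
The plan is to reduce the coisotropic condition $\alpha\wedge(d\alpha)^{n-k}|_{TY_s}=0$ — more precisely, the condition that $TY_s\cap\xi$ is a coisotropic subspace of the symplectic vector space $(\xi,d\alpha)$ at each point — to a matrix identity by working entirely in the basis constructed above. Recall that a submanifold $Y_s$ of a contact manifold is coisotropic exactly when, at every point, the subspace $TY_s\cap\xi\subset\xi$ contains its own $d\alpha$-orthogonal complement; equivalently, since $\dim(TY_s\cap\xi)=n+k$ inside the $2n$-dimensional symplectic space $\xi$, the restriction of $d\alpha$ to $TY_s\cap\xi$ has rank exactly $2k$, i.e.\ its kernel is the $(n-k)$-dimensional characteristic distribution. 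So the first step is to compute the matrix of $d\alpha$ restricted to the frame $\{f^\alpha+\mathcal{A}_{\alpha\beta}e_\beta\}\cup\{g_i+\mathcal{B}_{i\alpha}e_\alpha\}$ of $TY_s\cap\xi$ using the pairings already recorded in the preceding lemma.

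Concretely, I would first assemble the Gram matrix of $d\alpha$ on this frame block by block. Using $d\alpha(e_\alpha,e_\beta)=0$, $d\alpha(e_\alpha,f^\beta)=\delta_\alpha^\beta$ (up to sign), $d\alpha(e_\alpha,g_i)=d\alpha(f^\alpha,g_i)=0$, and $d\alpha(g_i,g_j)=d\lambda(\mathfrak{g}_i,\mathfrak{g}_j)+F^\alpha_{ij}p_\alpha$, one computes that $d\alpha(f^\alpha+\mathcal{A}_{\alpha\gamma}e_\gamma,\ f^\beta+\mathcal{A}_{\beta\delta}e_\delta)=\mathcal{A}_{\beta\alpha}-\mathcal{A}_{\alpha\beta}$, that $d\alpha(f^\alpha+\mathcal{A}e,\ g_i+\mathcal{B}e)=-\mathcal{B}_{i\alpha}$ (again up to the sign convention fixed by the lemma), and that $d\alpha(g_i+\mathcal{B}e,\ g_j+\mathcal{B}e)=d\lambda(\mathfrak{g}_i,\mathfrak{g}_j)+F^\alpha_{ij}s_\alpha$, since the $e_\alpha$ only pair nontrivially with $f^\beta$ and those contributions cancel by the skew-symmetry bookkeeping. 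Thus on the ordered frame $(\{f^\alpha+\cdots\},\{g_i+\cdots\})$ the matrix of $d\alpha$ has the $2\times 2$ block form with top-left block the skew matrix $(\mathcal{A}_{\beta\alpha}-\mathcal{A}_{\alpha\beta})$, top-right block $-(\mathcal{B}_{i\alpha})^{T}$, bottom-left $(\mathcal{B}_{i\alpha})$, and bottom-right block $\Omega:=(d\lambda(\mathfrak{g}_i,\mathfrak{g}_j)+F^\alpha_{ij}s_\alpha)$.

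The second step invokes exactly the linear-algebra fact behind \cite[Proposition 2.2]{OP2005}: for a section close to the zero section the bottom-right block $\Omega$ is invertible (this is where we use $|\mathbf{p}|$ small, since at $s=0$ it is $d\lambda(\mathfrak{g}_i,\mathfrak{g}_j)$, the pre-symplectic pairing on $G'$, which is nondegenerate because $\F$ is the characteristic foliation), and a block matrix $\begin{pmatrix}P & -B^{T}\\ B & \Omega\end{pmatrix}$ with $\Omega$ invertible has the same kernel dimension as its Schur complement $P+B^{T}\Omega^{-1}B$. The subspace $TY_s\cap\xi$ is coisotropic iff this Schur complement vanishes, i.e.\ $\mathcal{A}_{\beta\alpha}-\mathcal{A}_{\alpha\beta}=\mathcal{B}_{i\alpha}\omega^{ij}\mathcal{B}_{j\beta}$ where $(\omega^{ij})=\Omega^{-1}$, which is precisely \eqref{eqn:coiso_master}. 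One should also note that the coisotropic condition is pointwise and frame-independent, so establishing it in this one convenient local frame over a foliated chart suffices, and the charts cover $Y$.

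The main obstacle is purely computational rather than conceptual: verifying that the off-block pairings come out exactly as $\pm\mathcal{B}_{i\alpha}$ and that the cross terms involving $\mathcal{A}$ and $\mathcal{B}$ with the correction terms hidden inside $e_\alpha,f^\beta,g_i$ (the $\Psi$-twisted $\partial/\partial p_\eta$ pieces) genuinely cancel — this is the same bookkeeping that makes the preceding lemma true, and it is why the author can legitimately say the proposition "follows directly from \cite[Proposition 2.2]{OP2005}" once the Gram matrix has been put in Darboux-like block form. A secondary point worth a sentence is checking that the sign/normalization conventions from the lemma ($d\alpha(e_\alpha,f^\beta)=\delta_\alpha^\beta$, not $-\delta_\alpha^\beta$) are tracked consistently so that the Schur complement identity reads with the signs as displayed in \eqref{eqn:coiso_master}; reversing a convention would at worst flip an overall sign on both sides and leave the equation unchanged.
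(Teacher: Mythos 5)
Your proposal is correct and follows essentially the same route as the paper: having the Darboux-type pairings of the frame $\{e_\alpha,f^\beta,g_i\}$ from the preceding lemma, one writes the Gram matrix of $d\alpha$ on the frame $\{f^\alpha+\mathcal{A}_{\alpha\beta}e_\beta\}\cup\{g_i+\mathcal{B}_{i\alpha}e_\alpha\}$ of $TY_s\cap\xi$ in block form and applies the Schur-complement rank criterion of \cite[Proposition 2.2]{OP2005}, which is exactly what the author invokes. Your only wobble is the sign of the top-left block (with $d\alpha(e_\alpha,f^\beta)=\delta_\alpha^\beta$ it comes out as $\mathcal{A}_{\alpha\beta}-\mathcal{A}_{\beta\alpha}$), but as you note this is absorbed consistently and the final equation (\ref{eqn:coiso_master}) is unchanged.
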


We will call (\ref{eqn:coiso_master}) the contact master equation of coisotropic deformations. This generalizes the case of Legendrian foliations computed in \autoref{thm:master}. For later purposes, let us explicitly compute the LHS of (\ref{eqn:coiso_master}) as follows.
\begin{equation} \label{eqn:LHS}
	\mathcal{A}_{\alpha\beta}-\mathcal{A}_{\beta\alpha}=\frac{\p s_\beta}{\p q^\alpha}-\frac{\p s_\alpha}{\p q^\beta}+\Phi^{i1} s_\alpha(\frac{\p \Phi_{1i}}{\p q^\beta}+\frac{\p s_\beta}{\p y^i})-\Phi^{i1} s_\beta(\frac{\p \Phi_{1i}}{\p q^\alpha}+\frac{\p s_\alpha}{\p y^i})
\end{equation}

\subsection{Infinitesimal deformation of coisotropic submanifolds} \label{sec:coiso_infi}
Just as in the Legendrian foliation case, we first investigate the linearized version of (\ref{eqn:coiso_master}). This is easily seen, using (\ref{eqn:LHS}), to be
\begin{equation}
\frac{\p s_\beta}{\p q^\alpha}-\frac{\p s_\alpha}{\p q^\beta}+\sum_{i=1}^{2k+1} \frac{a_i}{|a|^2} (s_\alpha \frac{\p a_i}{\p q^\beta}-s_\beta \frac{\p a_i}{\p q^\alpha})=0,
\end{equation}
for any $1 \leq \alpha,\beta \leq n-k$, where $|a|^2=\sum_{i=1}^{2k+1} a^2_i$. Here we used the fact that the RHS of (\ref{eqn:coiso_master}) is of order at least two, and also the following Taylor expansion
\begin{equation*}
	\Phi^{i1} = \frac{a_i}{\sum_{j=1}^{2k+1} a_j^2+s_\alpha R^\alpha_j a_j} = \frac{a_i}{|a|^2}+o(s).
\end{equation*}

Let $\lambda$ be the restriction of the contact form $\alpha$ on the coisotropic submanifold $Y$ as before, and $\F=\ker(\lambda \wedge d\lambda)$ be the characteristic foliation. We will define a tangential one-form associated with $\lambda$, which generalizes our construction of $\mu$ (or rather $\pi(\mu)$) in \autoref{sec:leg_infi}. Namely, for any tangential vector field $X \in \Gamma(\F)$, observe that $i_X (\lambda \wedge d\lambda)=\lambda \wedge (i_X d\lambda)=0$. Hence there exists a function $\mu_X$ such that $i_X d\lambda=\mu_X \lambda$. Now we simply define the one-form $\mu \in \Omega^1(\F)$ by $\mu(X):=\mu_X$ for any $X \in \Gamma(\F)$.

\begin{lemma}
The above defined $\mu \in \Omega^1(\F)$ is uniquely determined by $\lambda$, and moreover it is $d_\F$-closed.
\end{lemma}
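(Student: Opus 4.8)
The claim has two parts: (i) $\mu$ is well-defined (independent of auxiliary choices, determined by $\lambda$ alone), and (ii) $d_\F\mu = 0$. For part (i), the only thing to check is that $\mu_X$ is genuinely a function of $X$ pointwise and linearly. Since $\F = \ker(\lambda\wedge d\lambda)$, for $X\in\Gamma(\F)$ we have $i_X(\lambda\wedge d\lambda)=0$, which unwinds to $\lambda(X)\,d\lambda - \lambda\wedge(i_Xd\lambda)=0$; but $\lambda(X)=0$ on a coisotropic $Y$ (the characteristic foliation lies in $\ker\lambda$, as noted where $\ker\lambda\supset\F$ is used), so $\lambda\wedge(i_Xd\lambda)=0$, forcing $i_Xd\lambda = \mu_X\lambda$ for a scalar $\mu_X$ (here I should remark that $i_Xd\lambda$ need not itself be in $\Omega^1(\F)$, but its value as a multiple of $\lambda$ is unambiguous, and $\mu(X):=\mu_X$ depends only on $X$ and $\lambda$). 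Tensoriality of $X\mapsto i_Xd\lambda$ and the pointwise uniqueness of the coefficient $\mu_X$ (wherever $\lambda\neq 0$, i.e., everywhere on $Y$) give that $\mu\in\Omega^1(\F)$ is well-defined and depends only on $\lambda$. This reduces to essentially the same local bookkeeping as in \autoref{lem:mu_closed}; I would do it in the foliated chart with $\lambda = a_i\,dy^i$ to produce an explicit formula for $\mu$ in terms of the $\mathfrak{g}_i$.

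\textbf{Closedness.} For part (ii) I would run the same argument as \autoref{lem:mu_closed}, now upgraded to codimension $2k+1$. The cleanest route: apply $d$ to the defining identity $\lambda\wedge d\lambda = 0$ — this gives nothing — so instead differentiate the relation $i_X d\lambda = \mu(X)\lambda$ along a second tangential field. Concretely, for $X,Y'\in\Gamma(\F)$ expand $0 = i_{Y'}i_X(\lambda\wedge d\lambda)$ and use $d(d\lambda)=0$ together with the Cartan-formula expression for $d_\F\mu$, i.e.
\begin{equation*}
	d_\F\mu(X,Y') = X(\mu(Y')) - Y'(\mu(X)) - \mu([X,Y']).
\end{equation*}
The point is that each of these terms can be read off by pairing $d\lambda$ (and $0 = d(d\lambda)$) against $X$, $Y'$, and brackets of tangential vector fields, exactly as in the codimension-one proof where $\pi(\mu)=d_\F(\log f)$ appeared. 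In the chart, with $\lambda = a_i\,dy^i$ and the transverse basis $\mathfrak{g}_1,\dots,\mathfrak{g}_{2k}$ on $\ker\lambda\cap G$, the one-form $\mu$ should again turn out to be, up to sign, $d_\F$ of $\tfrac12\log|a|^2$ plus a correction coming from the curvature $F^\alpha_{ij}$ — but the key structural fact is that restricted to $T\F$ it is locally exact, hence $d_\F$-closed. Alternatively, and perhaps more transparently: along each leaf $d\lambda$ pulls back to $0$ (leaves are isotropic, in fact $\lambda\wedge d\lambda$ vanishes on $T\F$), so $d\lambda = \lambda\wedge\theta + (\text{terms with at least one transverse covector})$; computing $d^2\lambda=0$ and restricting to $\wedge^2 T\F$ isolates $d_\F(\pi(\theta)) = 0$, and $\pi(\theta) = -\mu$.

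\textbf{Main obstacle.} The genuine subtlety, absent in the codimension-one case, is that $i_Xd\lambda$ is a one-form on $Y$ that need not annihilate $\F$ or $G'$, so "$i_Xd\lambda = \mu_X\lambda$" must be justified carefully: it says the one-form $i_Xd\lambda$ is proportional to $\lambda$ as a section of $T^*Y$, which is a strong statement. I would verify it directly from $\F = \ker(\lambda\wedge d\lambda)$: $X\in\ker(\lambda\wedge d\lambda)$ means $\lambda(X)d\lambda = \lambda\wedge i_Xd\lambda$, and since $\lambda(X)=0$ this is $\lambda\wedge i_Xd\lambda = 0$, whence $i_Xd\lambda\in\langle\lambda\rangle$ pointwise. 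This is the crux; everything after it is the same Cartan-calculus computation as in \autoref{lem:mu_closed}, and the independence and closedness then follow formally. I expect no essential difficulty in the closedness step itself once the proportionality is in hand — it is a one-line consequence of $d^2\lambda=0$ restricted to $T\F$ — so I would keep that part terse and spend the words on the proportionality and well-definedness.
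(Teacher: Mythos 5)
Your argument is correct, but it takes a genuinely different route from the paper's. The paper proves closedness by a purely local computation: in a foliated chart with $\lambda=a_i\,dy^i$, the defining relation $i_{\p_{q^\alpha}}d\lambda=\mu_\alpha\lambda$ unwinds to $\mu_\alpha a_i=\p a_i/\p q^\alpha$ for every $i$; contracting with $a_i$ and summing gives $\mu=d_\F(\log|a|)$ exactly, so $\mu$ is locally $d_\F$-exact and hence closed. (In particular your anticipated ``correction coming from the curvature $F^\alpha_{ij}$'' does not appear: the system $\mu_\alpha a_i=\p a_i/\p q^\alpha$ overdetermines $\mu_\alpha$, and the answer is literally the codimension-one formula with $f$ replaced by $|a|$. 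This is a misprediction, not a gap, since your fallback claim --- local exactness --- is what you actually use.) Your primary route is instead coordinate-free: from $i_Xd\lambda=\mu(X)\lambda$ for $X\in\Gamma(\F)$ and $d(d\lambda)=0$ evaluated on $(X,Y,Z)$ with $X,Y$ tangential and $Z$ chosen so that $\lambda(Z)\neq0$, the Cartan formula plus $\mathcal{L}_X\lambda=i_Xd\lambda=\mu(X)\lambda$ makes all terms cancel except $d_\F\mu(X,Y)\,\lambda(Z)$, forcing $d_\F\mu=0$. This works and has the advantage of never producing the explicit formula for $\mu$ (which the paper needs anyway later, in (\ref{eqn:mu_alpha}), to verify the linearized master equation); the paper's computation has the advantage of delivering that formula for free. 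Your emphasis on the proportionality $i_Xd\lambda\in\langle\lambda\rangle$ is well placed --- that is the one point where $\F=\ker(\lambda\wedge d\lambda)$ and $\F\subset\ker\lambda$ are both genuinely used --- and your third suggested route (isolating the doubly-tangential component of $d^2\lambda=0$ after writing $d\lambda=\mu\wedge\lambda+(\text{purely transverse})$) also works, though as stated it is imprecise: $d^2\lambda$ is a $3$-form, so one isolates the component with two tangential covectors rather than ``restricting to $\wedge^2T\F$'', and the decomposition of $d\lambda$ already presupposes the proportionality relation.
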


\begin{proof}
The uniqueness part is obvious. So we will prove the $d_\F$-closeness. In a foliated chart $\RR^{n+k+1}_{\mathbf{y},\mathbf{q}}$, we can write $\lambda=a_i dy^i$ and compute its exterior derivative $$d\lambda=\frac{\p a_j}{\p y^i} dy^i \wedge dy^j+\frac{\p a_i}{\p q^\alpha} dq^\alpha \wedge dy^i.$$ To compute $\mu=\mu_\alpha dq^\alpha$, we note that by definition $$(\mu_\alpha a_i) dy^i=\mu_\alpha \lambda=i_{\p_{q^\alpha}} d\lambda=\frac{\p a_i}{\p q^\alpha}dy^i,$$ which, by equating both sides term-by-term, is equivalent to
\begin{equation} \label{eqn:mu_alpha_pre}
\mu_\alpha a_i=\frac{\p a_i}{\p q^\alpha} \quad \text{for all } 1 \leq i \leq 2k+1.
\end{equation}
Now we can multiply both sides of (\ref{eqn:mu_alpha_pre}) by $a_i$ and sum over $i$ to obtain
\begin{equation} \label{eqn:mu_alpha}
\mu=\mu_\alpha dq^\alpha=\sum_{i=1}^{2k+1} \frac{a_i}{|a|^2} \frac{\p a_i}{\p q^\alpha} dq^\alpha=d_\F(\log |a|).
\end{equation}
The conclusion that $\mu$ is $d_\F$-closed follows from the fact that $d_\F$ is a differential.
\end{proof}

It is easy to see that everything in \autoref{sec:leg_infi} can be carried over here with little modifications, so we will omit the details. Note however that here we just define $\mu$, instead of $\pi(\mu)$, to be the tangential one-form. In particular we define the twisted tangential de Rham cohomology as follows.

\begin{definition}
The {\em twisted tangential differential} $d_\F^\lambda$ on $\Omega^\bullet(\F)$ is defined by $$d_\F^\lambda(\omega)=d_\F \omega-\mu\wedge\omega$$ for any $\omega \in \Omega^\bullet(\F)$. The {\em twisted tangential (de Rham) cohomology} $H_t^\bullet(\F)$ is defined to be $H^\bullet(\Omega^\bullet(\F),d_\F^\lambda)$.
\end{definition}

The following theorem characterizes the infinitesimal deformations of general coisotropic submanifolds with nonsingular characteristic foliation. This is completely analogous to \autoref{thm:leg_infi}.

\begin{theorem}
Let $Y \subset (M,\xi=\ker \alpha)$ be a coisotropic submanifold with nonsingular characteristic foliation $\F$, and let $\lambda=\alpha|_Y$. Then a tangential one-form $\zeta \in \Omega^1(\F)$ is an infinitesimal coisotropic deformation of $Y$ if and only if $d_\F^\lambda \zeta=0$ on $\Omega^2(\F)$.
\end{theorem}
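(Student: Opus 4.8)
The plan is to extract the linearized master equation and show it coincides, after applying the inclusion $i_G$ (the analogue of $i_L$ in Section~\ref{sec:nonsing_Leg}), with the twisted tangential cocycle condition $d_\F^\lambda \zeta = 0$. First I would start from equation~(\ref{eqn:coiso_master}), replace $s$ by $t\zeta$ and keep only the order-$t$ term. Since the right-hand side $\mathcal{B}_{i\alpha}\omega^{ij}\mathcal{B}_{j\beta}$ is visibly quadratic in $s$ (each $\mathcal{B}_{i\alpha}$ from (\ref{eqn:B's}) is order one in $s$ once one notes that the $s$-independent terms $s_\alpha\Phi^{s1}(\cdots)$ and $\Phi_{1j}/\p q^\alpha$ either carry an explicit $s_\alpha$ or, for the $\p \Phi_{1j}/\p q^\alpha$ piece with $\Phi_{1j}=a_j+p_\alpha R^\alpha_j$, reduce to $\p a_j/\p q^\alpha$ contracted against something that still vanishes to the needed order — this is the one bookkeeping point to verify carefully), the linearization of (\ref{eqn:coiso_master}) is just the vanishing of the linear part of the left-hand side, which is already written out in (\ref{eqn:LHS}). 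Using the Taylor expansion $\Phi^{i1} = a_i/|a|^2 + o(s)$ quoted in the text, the linear part of (\ref{eqn:LHS}) becomes precisely
\[
\frac{\p \zeta_\beta}{\p q^\alpha}-\frac{\p \zeta_\alpha}{\p q^\beta}+\sum_{i=1}^{2k+1}\frac{a_i}{|a|^2}\Big(\zeta_\alpha\frac{\p a_i}{\p q^\beta}-\zeta_\beta\frac{\p a_i}{\p q^\alpha}\Big)=0.
\]

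Next I would identify this with $d_\F^\lambda\zeta = 0$ in coordinates. Writing $\zeta = \zeta_\alpha\, dq^\alpha \in \Omega^1(\F)$, we have $d_\F\zeta = \tfrac12(\p \zeta_\beta/\p q^\alpha - \p \zeta_\alpha/\p q^\beta)\,dq^\alpha\wedge dq^\beta$, and by (\ref{eqn:mu_alpha}), $\mu = \mu_\alpha\,dq^\alpha$ with $\mu_\alpha = \sum_i (a_i/|a|^2)\,\p a_i/\p q^\alpha$. Hence the $dq^\alpha\wedge dq^\beta$-component of $\mu\wedge\zeta$ is $\mu_\alpha\zeta_\beta - \mu_\beta\zeta_\alpha = \sum_i (a_i/|a|^2)(\zeta_\beta\,\p a_i/\p q^\alpha - \zeta_\alpha\,\p a_i/\p q^\beta)$, so the displayed linearized equation is exactly the statement that the $(\alpha,\beta)$-component of $d_\F\zeta - \mu\wedge\zeta = d_\F^\lambda\zeta$ vanishes for all $\alpha<\beta$. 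Since these components exhaust $\Omega^2(\F)$ in the chart, this gives $d_\F^\lambda\zeta = 0$ on $\Omega^2(\F)$; conversely, running the computation backwards shows any tangential $1$-cocycle arises as the derivative at $t=0$ of a path of coisotropic graphs (take, e.g., the path $t\mapsto t\zeta$ to first order — here one should note that being an infinitesimal deformation only requires satisfying the linearized equation, not integrating it).

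The main obstacle I anticipate is the bookkeeping that confirms the right-hand side of (\ref{eqn:coiso_master}) and all the $s$-independent-looking terms in (\ref{eqn:LHS}) and (\ref{eqn:B's}) genuinely contribute nothing at first order. In particular one must check that the factor $\omega^{ij}$ — the inverse of $(d\lambda(\mathfrak{g}_i,\mathfrak{g}_j) + F^\alpha_{ij}s_\alpha)$ — is $O(1)$ in $s$ (which needs $d\lambda|_{G'}$ nondegenerate, true since $\F = \ker(\lambda\wedge d\lambda)$ is nonsingular), so that $\mathcal{B}\omega\mathcal{B}$ is $O(s^2)$; and that the term $\Phi^{i1}s_\alpha\,\p\Phi_{1i}/\p q^\beta$ in (\ref{eqn:LHS}), although it contains the factor $\p\Phi_{1i}/\p q^\beta = \p a_i/\p q^\beta + (\p/\p q^\beta)(p_\gamma R^\gamma_i)$, contributes only through its $s$-independent part $\p a_i/\p q^\beta$ at linear order, which is precisely what produces the $\mu\wedge\zeta$ term. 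Once these reductions are in hand, the rest is the routine index-matching above; I would phrase the whole argument as ``mutatis mutandis from the proof of \autoref{thm:leg_infi}'' and only record the one new wrinkle, namely that the transverse curvature $F^\alpha_{ij}$ enters $\omega^{ij}$ but drops out at first order.
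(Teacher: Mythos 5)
Your proposal is correct and follows essentially the same route as the paper: linearize (\ref{eqn:coiso_master}) using (\ref{eqn:LHS}) and the expansion $\Phi^{i1}=a_i/|a|^2+o(s)$, observe that the right-hand side $\mathcal{B}_{i\alpha}\omega^{ij}\mathcal{B}_{j\beta}$ is of order at least two in $s$, and match the resulting linear equation against the coordinate expression of $d_\F^\lambda\zeta$ via (\ref{eqn:mu_alpha}). The one bookkeeping point you flag --- that the term $b_{ij}\,\p\Phi_{1j}/\p q^\alpha$ in $\mathcal{B}_{i\alpha}$ contributes nothing at order zero --- does check out, since $\p a_j/\p q^\alpha=\mu_\alpha a_j$ and $b_{ij}a_j=0$, so $\mathcal{B}_{i\alpha}=O(s)$ as needed.
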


\begin{proof}
The conclusion follows immediately from the following formula
\begin{equation*}
d_\F^\lambda \zeta= \frac{\p \zeta_\gamma}{\p q^\beta} dq^\beta \wedge dq^\gamma-\sum_{i=1}^{2k+1} \frac{a_i}{|a|^2} \frac{\p a_i}{\p q^\beta} \zeta_\gamma dq^\beta \wedge dq^\gamma
\end{equation*}
where $\zeta=\zeta_\beta dq^\beta \in \Omega^1(\F)$, which is easy to check using (\ref{eqn:mu_alpha}).
\end{proof}

\subsection{Pre-contact structures} \label{subsec:precontact}
It will turn out that the deformation of coisotropic submanifolds is closely related to the deformation of the so-called pre-contact structures. So we briefly review some basic definitions and facts about pre-contact structures in this section. 

Given a closed manifold $W$ of dimension $n$. A {\em pre-contact structure of rank $k$} on $W$ is a hyperplane distribution $\xi$ such that for any defining one-form $\xi=\ker\alpha$, the rank rk$(d\alpha|_{\ker\alpha}) \equiv k$. This definition is clearly motivated by the similar definition of pre-symplectic structures. It is easy to see that a pre-contact structure of rank 0 is a foliation, and if $n=2r+1$ is odd, then a pre-contact structure of full rank $2r$ is a contact structure. Also note that $k$ is always an even number since $d\alpha$ is a skew symmetric bilinear form on $\ker\alpha$.

Any pre-contact manifold $(W,\xi)$ of rank $k$ admits a characteristic foliation $\F=\ker(\alpha \wedge d\alpha)$ of codimension $k+1$. See, for example, \cite[Lemma 2.15]{H2013} for a proof of this fact. Just as in \autoref{subsec:MasterCoiso}, let us fix a transverse distribution $G$ such that $TW=T\F \oplus G$. Then we will say $\alpha$ defines a contact structure on $G$ in the sense that $\alpha \wedge (d\alpha)^{k/2}$ is nondegenerate on $G$. This is, of course, equivalent to saying that $d\alpha$ is nondegenerate on $\xi \cap G$.

We are mostly interested in deformations of pre-contact structures. So let $\xi_t$ be a smooth path of pre-contact structures of rank $k$ on $W$ with $\xi_0=\xi$. The special case when $k=0$, i.e., a path of foliations, tells us that one cannot hope to find an ambient isotopy $\phi_t: W \to W$ such that $\phi_t^\ast(\xi_t)=\xi$. In other words, pre-contact structure is not a stable structure unless it is a contact structure. However, since $\xi$ restricts to a contact structure on $G$, one can hope for a stability result in the $G$-directions. This is indeed the case by the following lemma.

\begin{lemma} \label{lem:precontact}
Given a $C^1$-small path of pre-contact structures $\xi_t, t \in [0,1]$, on $W$ of constant rank. Fix a splitting $TW=T\F \oplus G$, where $\F$ is the characteristic foliation of $\xi_0$. Then there exists an ambient isotopy $\phi_t: W \to W$ such that $\phi_0=\text{\em id}$ and $\phi_t^\ast \xi_t \cap G=\xi_0 \cap G$.
\end{lemma}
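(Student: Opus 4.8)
The plan is to run a Moser-type argument adapted to the partial-stability situation. The obstacle is that $\xi_t$ is not stable as a pre-contact structure, so we cannot hope for $\phi_t^\ast\xi_t = \xi_0$; we only want to fix the restriction to $G$, where $\xi_0$ genuinely restricts to a contact structure. First I would pick a smooth path of defining one-forms $\alpha_t \in \Omega^1(W)$ with $\xi_t = \ker\alpha_t$ and $\alpha_0 = \alpha$ (this exists since the path is $C^1$-small, so the $\xi_t$ stay graphical over $\xi_0$ and one can normalize, e.g.\ by requiring $\alpha_t(v) = 1$ for a fixed vector field $v$ transverse to all $\xi_t$). I would then seek $\phi_t$ as the flow of a time-dependent vector field $X_t$. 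Differentiating the desired relation is the key move: I want $\phi_t^\ast(\alpha_t|_G)$ to be pointwise proportional to $\alpha_0|_G$ — equivalently $\phi_t^\ast\xi_t \cap G = \xi_0 \cap G$ — which, after differentiating, becomes the condition that
\begin{equation*}
	\big(\mathcal{L}_{X_t}\alpha_t + \dot\alpha_t\big)\big|_{\xi_t \cap G} = 0
\end{equation*}
holds for all $t$ (the proportionality factor contributes only a multiple of $\alpha_t$, which vanishes on $\xi_t \cap G$). Using Cartan's formula, $\mathcal{L}_{X_t}\alpha_t = d(\alpha_t(X_t)) + i_{X_t}d\alpha_t$, so restricted to $\xi_t \cap G$ this reads
\begin{equation*}
	i_{X_t}d\alpha_t\big|_{\xi_t \cap G} = -\dot\alpha_t\big|_{\xi_t \cap G} - d(\alpha_t(X_t))\big|_{\xi_t \cap G}.
\end{equation*}

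The point is now that $d\alpha_t$ is \emph{nondegenerate} on $\xi_t \cap G$: this holds at $t=0$ by the hypothesis that $\alpha$ defines a contact structure on $G$ (equivalently $d\alpha$ is nondegenerate on $\xi \cap G$), and it persists for small $t$ by $C^1$-closeness and the constancy of the rank, since $\xi_t \cap G$ is a smoothly varying family of subbundles of the same rank $k$. Therefore, after choosing $X_t$ to be tangent to $\xi_t \cap G$ (so that the extra term $d(\alpha_t(X_t))$ is handled — in fact if $\alpha_t(X_t)=0$ the term $d(\alpha_t(X_t))$ need not vanish on $\xi_t \cap G$, so I would instead simply absorb it: define $h_t = \alpha_t(X_t)$ and note that any vector field of the form $X_t = (\text{something in } \xi_t\cap G) + h_t v_t$ with $v_t$ a fixed transverse field gives $\alpha_t(X_t)=h_t$; choosing $h_t \equiv 0$ forces $X_t \in \xi_t$, and the displayed equation then determines the $\xi_t\cap G$-component of $X_t$ uniquely by nondegeneracy of $d\alpha_t|_{\xi_t\cap G}$, while the $T\F$-component of $X_t$ inside $\xi_t$ remains free and may be set to zero). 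Concretely: take $X_t$ to be the unique section of $\xi_t \cap G$ solving $i_{X_t}\big(d\alpha_t|_{\xi_t\cap G}\big) = -\dot\alpha_t|_{\xi_t\cap G}$; such $X_t$ exists and is smooth in $t$ by nondegeneracy, and it is automatically $C^1$-small since $\dot\alpha_t$ is. Its flow $\phi_t$ is then defined on all of $W$ (closed), $\phi_0 = \mathrm{id}$, and by construction $\frac{d}{dt}\big(\phi_t^\ast\alpha_t\big)\big|_{\xi_0\cap G}$ vanishes in the sense needed, so $\phi_t^\ast(\xi_t \cap G) = \xi_0 \cap G$ for all $t$.

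There is one subtlety I would address carefully rather than gloss over: the equation determining $X_t$ lives on the bundle $\xi_t \cap G$, which moves with $t$, whereas $\phi_t$ must be an honest ambient isotopy of $W$. This is not a real problem — $X_t$ is a bona fide time-dependent vector field on $W$ (a section of $TW$, since $\xi_t \cap G \subset TW$), and its flow is an ambient isotopy; one only needs to check that the infinitesimal relation integrates correctly, which follows from the standard Moser computation $\frac{d}{dt}\phi_t^\ast\alpha_t = \phi_t^\ast(\mathcal{L}_{X_t}\alpha_t + \dot\alpha_t)$, restricted to $\xi_0 \cap G = \phi_t^{-1}(\xi_t \cap G)$. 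The main obstacle, then, is bookkeeping: verifying that the restriction $d\alpha_t|_{\xi_t\cap G}$ stays nondegenerate for all $t\in[0,1]$ (not just near $0$), which is where the constant-rank hypothesis and the $C^1$-smallness of the path are both essential, and ensuring the solution $X_t$ depends smoothly on $t$ up to $t=1$. Given the hypotheses this is routine, but it is the step that genuinely uses every assumption in the statement.
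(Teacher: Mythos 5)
Your proposal is correct and follows essentially the same route as the paper: a Moser-trick argument that differentiates the relation $\phi_t^\ast(\alpha_t|_G)=g\,\alpha_0|_G$, applies Cartan's formula, and uses the nondegeneracy of $d\alpha_t$ on $\xi_t\cap G$ (guaranteed by $C^1$-smallness and constancy of rank) to solve for the generating vector field. The only cosmetic difference is that the paper writes $V_t=f_tR_t+X_t$ with $R_t$ the Reeb-type field of the contact structure on $G$ and leaves $f_t$ free, whereas you fix the Reeb component to zero and obtain a unique $X_t\in\xi_t\cap G$ — a special case of the same ansatz.
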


\begin{proof}
The proof is a standard application of Moser's trick. Namely, let $\xi_t=\ker\alpha_t$. We look for solutions to the following equation
\begin{equation} \label{eqn:moser1}
	\phi_t^\ast \alpha_t|_G=g\alpha_0|_G,
\end{equation}
for some $g \in C^\infty(W)$. Now differentiate both sides of (\ref{eqn:moser1}) with respect to $t$ and simplify to arrive at the following equation.
\begin{equation} \label{eqn:moser2}
	\mathcal{L}_{V_t} \alpha_t|_G+\dot\alpha_t|_G=h\alpha_t|_G
\end{equation}

Observe that for each $t$, since $\xi_t \cap G$ is contact by the $C^1$-small condition, there is a unique vector field $R_t \in \Gamma(G)$ such that $\alpha_t(R_t)=1$ and $i_{R_t} d\alpha_t=0$. Then we can write $V_t=f_t R_t+X_t$, where $\alpha_t(X_t)=0$. Then using Cartan's formula, we can rewrite (\ref{eqn:moser2}) as follows,
\begin{equation}
	i_{X_t} d\alpha_t|_G+(df_t+\dot{\alpha}_t)|_G=h\alpha_t|_G.
\end{equation}
For any choice of $f_t$, there is a (not necessarily unique) solution for $X_t$ and $h$. This finishes the proof.
\end{proof}

For later use, let us make the following definition.

\begin{definition} \label{defn:precontact}
A path $\xi_t, t \in [0,1]$, of pre-contact structures of constant rank is called {\em strict} if for some choice of the transverse distribution $G$, the contact structure on $\xi_t \cap G$ is constant in time $t$. In this case, we say $\xi_1$ is {\em strictly homotopic} to $\xi_0$ with respect to $G$.
\end{definition}

The following result follows immediately from \autoref{lem:precontact}.

\begin{cor} \label{cor:precontact}
For any pre-contact structure $\xi$ on $W$, if another pre-contact structure $\xi'$ is connected to $\xi$ by a $C^1$-small path of pre-contact structures of constant rank, then there exists a pre-contact structure $\xi''$, which is isotopic to $\xi'$ and is strictly homotopic to $\xi$.
\end{cor}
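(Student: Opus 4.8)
The plan is to deduce the corollary almost immediately from \autoref{lem:precontact}. Let $\xi_t$, $t\in[0,1]$, denote the given $C^1$-small path of constant-rank pre-contact structures with $\xi_0=\xi$ and $\xi_1=\xi'$, and fix a splitting $TW=T\F\oplus G$ with $\F$ the characteristic foliation of $\xi_0$, as required by \autoref{lem:precontact}. Applying that lemma produces an ambient isotopy $\phi_t\colon W\to W$ with $\phi_0=\mathrm{id}$ and $\phi_t^\ast\xi_t\cap G=\xi_0\cap G$ for every $t$. I would then simply set $\xi'':=\phi_1^\ast\xi_1=\phi_1^\ast\xi'$ and verify the two required properties.

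First I would check that $\xi''$ is isotopic to $\xi'$: the ambient isotopy $\phi_t$ (run backwards if one wants a path ending at $\xi'$) realizes $\phi_1^\ast\xi'=\xi''$, and since each $\phi_t$ is a diffeomorphism, pullback preserves the rank of $d\alpha|_{\ker\alpha}$, so the intermediate hyperplane fields are again pre-contact of the same constant rank. Next I would verify that $\xi''$ is strictly homotopic to $\xi$ with respect to $G$: consider the gauged path $\eta_t:=\phi_t^\ast\xi_t$. Since $\phi_0=\mathrm{id}$ we have $\eta_0=\xi_0=\xi$, while $\eta_1=\phi_1^\ast\xi_1=\xi''$; each $\eta_t$ is a constant-rank pre-contact structure; and the defining relation $\phi_t^\ast\alpha_t|_G=g_t\,\alpha_0|_G$ extracted from the proof of \autoref{lem:precontact} shows $\eta_t\cap G=\ker(\phi_t^\ast\alpha_t|_G)=\ker(\alpha_0|_G)=\xi\cap G$, so the contact structure induced on the $G$-summand is literally independent of $t$. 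By \autoref{defn:precontact}, $\eta_t$ is then a strict path, which is exactly the assertion.

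Since the argument is pure bookkeeping, there is no genuine obstacle; the only point I would be careful about is confirming that the gauged path $\eta_t=\phi_t^\ast\xi_t$ satisfies \emph{all} clauses of \autoref{defn:precontact} at once — in particular that it really starts at $\xi$ itself (this is where $\phi_0=\mathrm{id}$ enters) and that ``constant contact structure on $\xi_t\cap G$'' is meant as the pointwise hyperplane-field statement supplied by \autoref{lem:precontact}, nothing stronger. It is also worth noting that the $C^1$-smallness hypothesis here is precisely what \autoref{lem:precontact} consumes, so, unlike in the global homotopy setting, no preliminary subdivision of the path into small pieces is needed.
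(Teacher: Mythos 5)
Your argument is correct and is essentially the paper's own proof: apply \autoref{lem:precontact} to the given path to obtain the ambient isotopy $\phi_t$, take $\xi''$ to be the image of $\xi'$ under $\phi_1$, and observe that the gauged path $\phi_t^\ast\xi_t$ is strict while $\phi_t$ itself realizes the isotopy to $\xi'$. The paper simply states that $\xi''$ ``satisfies all the desired conditions''; your verification of the two clauses (and the remark that pullback by a diffeomorphism preserves the constant rank) just fills in that routine check.
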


\begin{proof}
Since $\xi$ and $\xi'$ are connected by a $C^1$-small path of pre-contact structures, \autoref{lem:precontact} produces an isotopy $\phi_t: W \to W, t \in [0,1]$. It is easy to see $\xi''=\phi_1(\xi')$ satisfies all the desired conditions.
\end{proof}

\subsection{Deformation of coisotropic submanifolds and pre-contact structures}
In this section we will try to generalize the results in \autoref{subsec:foliation} and \autoref{subsec:gauge_equiv} to the case of coisotropic submanifolds $Y^{n+1+k} \subset (M^{2n+1},\xi)$ with nonsingular characteristic foliation. Obviously in this situation, the pre-contact structures discussed in the previous section will play the role of foliations. However, due to the nontrivial ``transverse geometry'' of the characteristic foliation for $k>0$, we cannot recover the full strength of \autoref{thm:leg_deform} and \autoref{thm:ModGauge}. Nevertheless we will build up a partial correspondence between the deformation of coisotropic submanifolds and the deformation of pre-contact structures.

Suppose $(Y,\F) \subset (M,\xi)$ is a coisotropic submanifold with nonsingular characteristic foliation $\F$. Fix a transverse distribution $G \pitchfork \F$ as before. Let $s \in \Omega^1(\F)$ be a tangential one-form. Define $\overline{s} \in \Omega^1(Y)$ to be the lift of $s$ defined by asking $\overline{s}(X)=0$ for any vector field $X \in \Gamma(G)$. In local foliated coordinates as before, if we write $s=s_\alpha dq^\alpha$, then one can easily check $\overline{s}=s_\alpha dq^\alpha-(R^\beta_i s_\beta) dy^i$. Motivated by the discussions in \autoref{subsec:foliation}, we are interested in the following one-form
\begin{equation*}
	\lambda-\overline{s}=\Phi_{1i} dy^i-s_\alpha dq^\alpha \in\Omega^1(Y), 
\end{equation*}
where $\Phi_{1i}=a_i+R^\gamma_i s_\gamma$. It is easy to see that
\begin{equation*}
	\ker(\lambda-\overline{s})=\vspan \big\{ \frac{\p}{\p q^\alpha}+\Phi^{i1}s_\alpha \frac{\p}{\p y^i} \big\}_{1 \leq \alpha \leq n-k} \oplus \vspan \big\{ b_{ij} (\frac{\p}{\p y^j}+R^\gamma_j \frac{\p}{\p q^\gamma}) \big\}_{1 \leq i \leq 2k}.
\end{equation*}

For the convenience of notations, let us write
\begin{equation*}
	\mathfrak{F}_\alpha = \frac{\p}{\p q^\alpha}+\Phi^{i1}s_\alpha \frac{\p}{\p y^i} \quad \text{and} \quad \mathfrak{G}_i = b_{ij} (\frac{\p}{\p y^j}+R^\gamma_j \frac{\p}{\p q^\gamma}).
\end{equation*}
Here one may notice that $\mathfrak{G}_i = \mathfrak{g}_i$ defined in (\ref{eqn:basevector}), which is so because we constructed $\overline{s}$ to be vanishing on the $G$-directions. Nevertheless we decided to use a different notation here to avoid any possible confusions.

The following calculations are straightforward but somewhat tedious, so we will just give the results.
\begin{align*}
	d(\lambda-\overline{s})(\mathfrak{F}_\alpha,\mathfrak{F}_\beta) &= \frac{\p s_\alpha}{\p q^\beta}-\frac{\p s_\beta}{\p q^\alpha}+\Phi^{t1} s_\beta(\frac{\p \Phi_{1t}}{\p q^\alpha}+\frac{\p s_\alpha}{\p y^t})-\Phi^{t1} s_\alpha(\frac{\p \Phi_{1t}}{\p q^\beta}+\frac{\p s_\beta}{\p y^t}), \\
	d(\lambda-\overline{s})(\mathfrak{F}_\alpha,\mathfrak{G}_i) &= b_{ij} \big( (\frac{\p s_\alpha}{\p q^\gamma}-\frac{\p s_\gamma}{\p q^\alpha}) R^\gamma_j-\Phi^{t1} s_\alpha R^\gamma_j (\frac{\p s_\gamma}{\p y^t}+\frac{\p \Phi_{1t}}{\p q^\gamma})+\frac{\p s_\alpha}{\p y^j}+\frac{\p \Phi_{1j}}{\p q^\alpha} \\
	& \quad -\Phi^{t1}s_\alpha (\frac{\p \Phi_{1t}}{\p y^j}-\frac{\p \Phi_{1j}}{\p y^t}) \big) \nonumber
\end{align*}

After some calculations, one can show that
\begin{equation*}
	\Omega_{\alpha\beta}:=d(\lambda-\overline{s}) (\mathfrak{F}_\alpha,\mathfrak{F}_\beta) = \mathcal{A}_{\beta\alpha}-\mathcal{A}_{\alpha\beta} \quad \text{and} \quad \Omega_{\alpha i}:=d(\lambda-\overline{s})(\mathfrak{F}_\alpha,\mathfrak{G}_i) = \mathcal{B}_{i\alpha},
\end{equation*}
where $\mathcal{A}_{\alpha\beta}$ and $\mathcal{B}_{i\alpha}$ are defined by (\ref{eqn:A's}) and (\ref{eqn:B's}), respectively. Moreover we define $\Omega_{ij}:= d(\lambda-\overline{s})(\mathfrak{G}_i,\mathfrak{G}_j)$, and denote $(\Omega^{ij})_{2k \times 2k}$ the inverse matrix of $(\Omega_{ij})_{2k \times 2k}$. Here the invertibility is guaranteed by assuming $s$ to be sufficiently small.

Now we check the condition under which the rank of $d(\lambda-\overline{s})$, or equivalently the rank of the above defined $(n-k) \times (n+k)$ matrix $\Omega$, is constantly equal to $2k$ on $\ker(\lambda-\overline{s})$. Since $(\Omega_{ij})_{2k \times 2k}$ is invertible and has rank $2k$, the condition is equivalent to
\begin{equation} \label{eqn:Omega}
	\Omega(\mathfrak{F}_\alpha-\Omega_\alpha^i \mathfrak{G}_i, \mathfrak{F}_\beta-\Omega_\beta^j \mathfrak{G}_j) =0
\end{equation}
for all $1 \leq \alpha,\beta \leq n-k$, where $\Omega_\alpha^i := \Omega_{\alpha l} \Omega^{li}$ is defined by the usual lifting of indices. Here, of course, we choose $\mathfrak{F}_\alpha-\Omega_\alpha^i \mathfrak{G}_i$ such that $d(\lambda-\overline{s})(\mathfrak{F}_\alpha-\Omega_\alpha^i \mathfrak{G}_i,\mathfrak{G}_j)=0$ for any $1 \leq j \leq 2k$. Spanning out the terms in (\ref{eqn:Omega}) and a little simplification yields the following equivalent equation
\begin{equation}
 \Omega_{\alpha\beta}= \Omega_{\alpha i} \Omega^{ji} \Omega_{\beta j}.
\end{equation}

Finally we observe that $\Omega_{ij}=-\omega_{ij}$, which is defined in \autoref{prop:coiso_master}. This is so by a straightforward computation as follows.
\begin{align*}
d\overline{s}(\mathfrak{G}_i,\mathfrak{G}_j) &= \big( \frac{\p s_\beta}{\p q^\alpha} dq^\alpha \wedge dq^\beta+(\frac{\p s_\alpha}{\p y^i}+\frac{\p (R_i^\beta s_\beta)}{\p q^\beta}) dy^i \wedge dq^\alpha+\frac{\p (R^\beta_i s_\beta)}{\p y^j} dy^i \wedge dy^j \big) (\mathfrak{G}_i,\mathfrak{G}_j) \\
	&= b_{is}b_{jt} \big( \frac{\p (R^\gamma_s s_\gamma)}{\p y^t}-\frac{\p (R^\gamma_t s_\gamma)}{\p y^s}+R^\gamma_t (\frac{\p s_\gamma}{\p y^s}+\frac{\p (R^\beta_s s_\beta)}{\p q^\gamma}) - R^\gamma_s (\frac{\p s_\gamma}{\p y^t}+\frac{\p (R^\beta_t s_\beta)}{\p q^\gamma}) \\
	& \quad+ R_s^\alpha R_t^\beta (\frac{\p s_\beta}{\p q^\alpha}-\frac{\p s_\alpha}{\p q^\beta}) \big) \\
	&= -F^\gamma_{ij} s_\gamma
\end{align*}
where $F^\gamma_{ij}$ is the curvature coefficient defined by (\ref{eqn:curvature}).

To summarize, we have proved the following result, which partially generalizes \autoref{thm:leg_deform} in the case of Legendrian foliations.

\begin{theorem}
Suppose $(Y,\F) \subset (M,\xi)$ is a coisotropic submanifold with nonsingular characteristic foliation $\F$. Let $\xi|_Y$ be the restricted pre-contact structure on $Y$. Fix a transverse distribution $G$ such that $TY=G \oplus T\F$. Then for any coisotropic submanifold $Y'$ which is $C^1$-close to $Y$, we can canonically associate a pre-contact structure $\xi'$ on $Y$ corresponding to $Y'$, which is $C^1$-close to $\xi|_Y$. Moreover $Y'$ is homotopic to $Y$ through a $C^1$-small path of coisotropic submanifolds if and only if $\xi'$, up to an isotopy, is strictly homotopic to $\xi|_Y$ through a $C^1$-small path of pre-contact structures of constant rank.
\end{theorem}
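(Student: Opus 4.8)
The strategy is to turn the identities established just above into a dictionary between coisotropic submanifolds $C^1$-close to $Y$ and rank-$2k$ pre-contact structures $C^1$-close to $\xi|_Y$, and then to feed this dictionary into two Moser-type inputs: \autoref{cor:precontact}, to replace a nearby path of pre-contact structures by a strictly homotopic one, and the neighborhood-theorem argument of \autoref{thm:ModGauge}, to absorb the unavoidable isotopy ambiguity. For the dictionary: given $Y'$ that is $C^1$-close to $Y$, graphicality produces a unique small section $s \in \Omega^1(\F)$ with $Y' = Y_s$, and we set $\xi' := \ker(\lambda - \overline s)$. Combining the identities $\Omega_{\alpha\beta} = \mathcal{A}_{\beta\alpha} - \mathcal{A}_{\alpha\beta}$, $\Omega_{\alpha i} = \mathcal{B}_{i\alpha}$, $\Omega_{ij} = -\omega_{ij}$ with \autoref{prop:coiso_master}, one sees that $Y_s$ is coisotropic if and only if $d(\lambda - \overline s)$ has rank exactly $2k$ on $\ker(\lambda - \overline s)$, that is, precisely when $\xi'$ is a pre-contact structure of rank $2k$; since $\overline s$ is $C^1$-small, $\xi'$ is then $C^1$-close to $\xi|_Y = \ker\lambda$. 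Conversely $Y_s$ is recovered from $\xi'$ together with the normalization $(\lambda - \overline s)|_G = \lambda|_G$ built into the definition of $\overline{\,\cdot\,}$, so $Y' \mapsto \xi'$ is the asserted canonical association, depending only on the fixed $G$. It is not surjective onto nearby rank-$2k$ pre-contact structures once $k \geq 1$, which is exactly why ``up to an isotopy'' must appear in the statement.

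For the forward implication, let $Y_t = Y_{s_t}$, $t \in [0,1]$, be a $C^1$-small path of coisotropic submanifolds in the standard neighborhood with $s_0 = 0$ and $Y_1 = Y'$. Then $\xi_t := \ker(\lambda - \overline{s_t})$ is a $C^1$-small path of rank-$2k$ pre-contact structures from $\xi|_Y$ to $\xi'$, hence of constant rank; note it need not itself be strict, since $d(\lambda - \overline{s_t})$ varies on the fixed subspace $G \cap \ker\lambda$ through the curvature term $F^\alpha_{ij}(s_t)_\alpha$. Applying \autoref{cor:precontact} to $\xi_t$ yields a pre-contact structure $\xi''$ that is isotopic to $\xi'$ and strictly homotopic to $\xi|_Y$; by the proof of that corollary the strict homotopy is the Moser-modified path of \autoref{lem:precontact}, so it is again $C^1$-small and of constant rank.

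For the backward implication, suppose $\xi''$ is isotopic to $\xi'$ and strictly homotopic to $\xi|_Y$ with respect to the fixed $G$, through a $C^1$-small path $\xi_t = \ker\mu_t$ of rank-$2k$ pre-contact structures. After rescaling each $\mu_t$ by a positive function I would take $\mu_0 = \lambda$ and, using strictness with respect to $G$, arrange that $\mu_t$ restricts to the fixed form $\lambda|_G$ on $G$ for all $t$; then $\mu_t - \lambda$ vanishes on $G$, so $\mu_t = \lambda - \overline{s_t}$ for a unique small $s_t \in \Omega^1(\F)$ with $s_0 = 0$. By the dictionary, $Y_{s_t}$ is then a $C^1$-small path of coisotropic submanifolds in the standard neighborhood running from $Y$ to $Y_{s_1}$, and the pre-contact structure associated to $Y_{s_1}$ is $\ker\mu_1 = \xi''$. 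Since $\xi''$ and $\xi'$ are isotopic through diffeomorphisms of $Y$, I would then lift that isotopy to a contact isotopy of the standard neighborhood fixing $Y$ setwise --- precisely as in the proof of \autoref{thm:ModGauge}, first building an isotopy of diffeomorphisms of $T^\ast\F$ out of the isotopy of $Y$ and then correcting it by the neighborhood theorem to an isotopy of contactomorphisms --- and its orbit of $Y_{s_1}$ is a $C^1$-small path of coisotropic submanifolds joining $Y_{s_1}$ to $Y' = Y_s$. Concatenating the two paths finishes the proof.

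I expect the main obstacle to be the very first step of the backward implication: exploiting strictness to select defining one-forms $\mu_t$ with the prescribed restriction to $G$, so that the path of pre-contact structures literally becomes a path of graphs $Y_{s_t}$. This is where the hypotheses ``strict'' and ``with respect to $G$'' are genuinely used, and it has no analogue in the codimension-one situation of \autoref{thm:ModGauge}, where any nearby defining one-form can be rescaled to agree with $\lambda$ along a transverse line field. If strictness is only assumed with respect to some auxiliary transverse distribution $\tilde G \neq G$, one further needs to reconcile $\tilde G$ with $G$ --- running the argument inside the contact thickening built from $\tilde G$ and transporting the resulting coisotropic path back into the fixed standard model via the Weinstein-type uniqueness of the thickening noted after its construction --- and I would expect this reconciliation to be the only genuinely new ingredient beyond the techniques of \autoref{subsec:gauge_equiv}.
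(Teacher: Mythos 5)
Your proposal is correct and follows essentially the same route as the paper: the dictionary $Y_s \leftrightarrow \xi'=\ker(\lambda-\overline{s})$ via the identities $\Omega_{\alpha\beta}=\mathcal{A}_{\beta\alpha}-\mathcal{A}_{\alpha\beta}$, $\Omega_{\alpha i}=\mathcal{B}_{i\alpha}$, $\Omega_{ij}=-\omega_{ij}$ together with \autoref{prop:coiso_master}, then \autoref{cor:precontact} for the forward direction and the strictness-based normalization $\mu_t=\lambda-\overline{s}_t$ for the converse. You are in fact somewhat more careful than the paper in handling the ``up to an isotopy'' ambiguity (lifting the isotopy between $\xi''$ and $\xi'$ to a contact isotopy of the thickening as in \autoref{thm:ModGauge}) and in flagging the dependence of strictness on the chosen transverse distribution $G$, both of which the paper leaves implicit.
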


\begin{proof}
Choose a contact form $\xi=\ker\alpha$, and let $\lambda=\alpha|_Y$ be the corresponding pre-contact form on $Y$ as usual. The first statement follows directly from the above discussion. Namely, given a coisotropic submanifold $Y'$ which is $C^1$-close to $Y$, we can realize $Y'$ as the graph of a tangential one-form $s \in \Omega^1(\F)$ in a standard contact neighborhood. Then the above calculations show that $\xi':=\ker(\lambda-\overline{s})$ is a pre-contact structure, which is $C^1$-close to $\xi|_Y$ given $s$ is small.

The second statement is now obvious. Namely, given a graphical path of coisotropic submanifolds in a neighborhood of $Y$, we obtain a corresponding path of tangential one-forms which in turn produces a path of pre-contact structures on $Y$ in the obvious way. Conversely, suppose we are given a $C^1$-small homotopy of pre-contact structures $\mu_t, t \in [0,1]$ on $Y$, such that $\mu_0=\xi|_Y$ and $\mu_1=\xi'$. Then by \autoref{cor:precontact}, we may assume, up to an isotopy, that $\xi'$ is strictly homotopic to $\xi$ in the sense of \autoref{defn:precontact}. Then multiplying the homotopy $\xi_t$ by a (time-dependent) non-vanishing function if necessary, we get a a sequence of defining one-form $\xi_t=\ker\lambda_t$ of the forms $\lambda_t=\lambda-\overline{s}_t$. The graph of $s_t$ in a standard contact neighborhood of $Y$ gives the desired homotopy of coisotropic manifolds from $Y'$ to $Y$.
\end{proof}

\appendix
\section{Twisted tangential de Rham cohomology} \label{apx:twisted_de_rham}

In this appendix, we will show that given a codimension one foliation, the twisted tangential (de Rham) cohomology is in general different from the usual tangential cohomology by an explicit computation the following example.

\begin{example}
Consider the one-dimensional foliation $\F$ on $T^2_{x,y}=\mathbb{R}^2 / (2\pi\mathbb{Z})^2$ defined by $\F=\ker\lambda$, where $\lambda=dy+\sin ydx$. Note that $\F$ has two closed leaves $\{y=0\}$ and $\{y=\pi\}$, and all the other leaves are diffeomorphic to $\mathbb{R}$, and they all limit onto the two closed leaves.
	
Let us first look at the usual tangential complex, which is given by
\begin{equation}
	0 \to \Omega^0(\F) \xrightarrow{d_\F} \Omega^1(\F) \to 0.
\end{equation}
We have of course $\Omega^0(\F)=C^\infty(T^2)$. By choosing a transverse line field $L=\langle \p_y \rangle$, we can identify $\Omega^1(\F)$ with the set $\{ gdx ~|~ g \in C^\infty(T^2) \}$. Moreover, for any $f \in \Omega^0(T^2)$, we have
\begin{equation}
	d_\F(f)=(\p_x f-\sin y \p_y f)dx
\end{equation}
Due to the dynamics of $\F$, it is easy to see that
\begin{equation}
	H^0(\F) \cong \mathbb{R},
\end{equation}	
which is generated by the constant functions. 

To compute $H^1(\F)$, let us pick any $fdx \in \Omega^1(\F)$. Let us call $a_0=\frac{1}{2\pi}\int_{S^1} f(x,0)dx$ and $a_\pi=\frac{1}{2\pi}\int_{S^1} f(x,\pi)dx$ the {\em periods} of $f$. Then it is clear that up to a coboundary, we can assume $f(x,0) \equiv a_0$ and $f(x,\pi) \equiv a_\pi$. Moreover two tangential one-forms represent different de Rham classes if they have different periods. Now suppose $a_0=a_\pi=0$. Then by restricting $f$ to the leaves contained in the cylinder $\{0<y<\pi\}$, we obtain a $S^1$-family of Schwartz functions on $\mathbb{R}$
\begin{equation}
	\phi_{(0,\pi)}: S^1 \to \mathcal{S}(\mathbb{R}).
\end{equation}
Similarly we can also restrict $f$ to the other cylinder $\{\pi<y<2\pi\}$ to get a map
\begin{equation}
	\phi_{(\pi,2\pi)}: S^1 \to \mathcal{S}(\mathbb{R}). 
\end{equation}
	
Conversely any two maps $\phi_{(0,\pi)}$ and $\phi_{(\pi,2\pi)}$ as above determines a one-cocycle in $\Omega^1(\F)$, which is a coboundary if and only if the corresponding functions
\begin{equation}
 	\Phi_{(0,\pi)}: S^1 \to \mathbb{R} \hspace{5mm} \text{and} \hspace{5mm} \Phi_{(\pi,2\pi)}: S^1 \to \mathbb{R}
\end{equation} 
obtained by integrating along $\mathbb{R}$ are zero functions.

As a conclusion, we have computed
\begin{equation}
	H^1(\F)=\mathbb{R}^2 \oplus (\text{Map}(S^1,\mathbb{R}))^{\oplus 2}
\end{equation}
as an infinite dimensional vector space.

Now let us look at the twisted version. First note that given the choice of $L=\langle \p_y \rangle$ as above, we have $\mu=-\cos ydx \in \Omega^1(\F)$, where $\mu$ is such that $d\lambda=\mu \wedge \lambda$. So $g \in H^0_{tw}(\F)$ if and only it satisfies the following first-order inhomogeneous PDE
\begin{equation} \label{eqn:PDE1}
	\p_x g-\sin y \p_y g+g\cos y=0
\end{equation}
on $T^2$. It turns out that the only solution to (\ref{eqn:PDE1}) is $\{g \equiv 0\}$. So we have $H^0_{tw}(\F)=0$.

Next we will compute $H^1_{tw}(\F)=\Omega^1(\F) / \image(d_{\F}^\lambda)$. Let us first restrict to the circle $\{y=0\}$, then $h \in \image(d_\F^\lambda)$ if
\begin{equation} \label{eqn:ODE1}
	h(x,0)=\p_x f(x,0)+f(x,0)
\end{equation}
for some $f \in C^\infty(T^2)$, where $x \in \mathbb{R}/2\pi\mathbb{Z}$. For simplicity of notations, let us write $f(x)=f(x,0)$ and $h(x)=h(x,0)$ for the moment. Now we can explicitly solve (\ref{eqn:ODE1}) to get
\begin{equation}
	f(x)=e^{-x} (\int_0^x h(t)e^t dt+C),
\end{equation}
where $C$ is a constant. Since $f$ is periodic of period $2\pi$, we can solve for $C$ and get the following (periodic) solution to (\ref{eqn:ODE1}).
\begin{equation}
	f(x)=e^{-x}  (\int_0^x h(t)e^t dt+\frac{1}{e^{2\pi}-1} \int_0^{2\pi} h(t)e^t dt)
\end{equation}
Hence we find out that $d_\F^\lambda$ is surjective when restricted to $\{y=0\}$. Similar calculation shows that the same holds for the other closed leaf $\{y=\pi\}$. Therefore to compute $H^1_{tw}(\F)$ we may restrict ourself to one-forms $hdx \in \Omega^1(\F)$ such that $h \equiv 0$ on $\{y=0\} \cup \{y=\pi\}$.

As in the untwisted case, let us consider an open leaf $F$ contained in the cylinder $\{0<y<\pi\}$. We can parametrize $F$ by a diffeomorphism $\phi: \mathbb{R} \to F$ defined by $\phi(t)=(t,2\cot^{-1}(e^t))$. Then clearly $d^\lambda_\F |_F: \mathcal{S}(\mathbb{R}) \to \mathcal{S}(\mathbb{R})$ is defined by $d^\lambda_\F |_F (f)=\p_t f+f\tanh t$. Now the equation
\begin{equation}
	h(t)=\p_t f(t)+f(t)\tanh t
\end{equation}
can be solved explicitly by
\begin{equation}
	f(t)=\sech(t) \int_0^t h(s)\cosh sds.
\end{equation}
It is not hard to show that $f \in \mathcal{S}(\mathbb{R})$ if $h$ is. Similarly argument also works for any leaf contained in $\{0<y<\pi\}$. To summarize, we have proved that the twisted tangential complex $(\Omega^\bullet(\F),d_\F^\lambda)$ is acyclic, i.e., $H^0_{tw}(\F)=H^1_{tw}(\F)=0$.
\end{example}

The following observation follows immediately from \autoref{cor:infini}.

\begin{cor}
Let $\F=\ker(dy+\sin y dx)$ be the foliation on $T^2$ as above. Then any foliation, which is $C^1$-close to $\F$, is ambient isotopic to $\F$.
\end{cor}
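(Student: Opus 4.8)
The plan is to derive this from the vanishing $H^1_{tw}(\F)=0$ established in the Example, using the isotopy criterion \autoref{thm:0order}. Since gauge-equivalent foliations are by definition ambient isotopic, it suffices to show that every foliation $\F'=\ker\lambda'$ which is $C^1$-close to $\F$ is gauge equivalent to $\F$. Rescaling $\lambda'$ by a positive function I may assume $\lambda'(\p_y)=1=\lambda(\p_y)$ (with $L=\langle\p_y\rangle$), so that $\lambda'=\lambda+\overline{s}$ for a $C^1$-small $s\in\Omega^1(\F)$.

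First I would produce an honest path of foliations $\F_t=\ker\lambda_t$, $t\in[0,1]$, with $\F_0=\F$, $\F_1=\F'$ and $\lambda_t=\lambda+\beta_t$, $\beta_t(\p_y)=0$, $\beta_0=0$, all $C^1$-close to $\F$. The naive interpolation $\lambda+t\overline{s}$ is not integrable, so here I would exploit the dynamics of $\F$: it has exactly the two closed leaves $\{y=0\}$ and $\{y=\pi\}$, both hyperbolic (holonomies $e^{\mp 2\pi}$), so a $C^1$-small $\F'$ has exactly two closed leaves $C^1$-near these, and after an isotopy I may assume they are again $\{y=0\}$ and $\{y=\pi\}$. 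On each of the two closed annuli bounded by these leaves $\F$ and $\F'$ are then determined by leafwise data (equivalently their first-return maps), which I can interpolate to build the path $\F_t$ inside a $C^1$-neighborhood of $\F$.

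Then I would invoke \autoref{thm:0order}: along the path $\dot\lambda_t=\dot\beta_t$, and it is enough to check $[\dot\beta_t]=0\in H^1_{tw}(\F_t)$ for all $t$. At $t=0$ this is precisely the acyclicity of $(\Omega^\bullet(\F),d_\F^\lambda)$ from the Example. For $t>0$ one needs the twisted tangential complex of $\F_t$ to remain acyclic, and here I would reuse the explicit primitives constructed there — near each closed leaf the $2\pi$-periodic solution $f(x)=e^{-x}\big(\int_0^x h(r)e^r\,dr+\tfrac{1}{e^{2\pi}-1}\int_0^{2\pi}h(r)e^r\,dr\big)$ of the leafwise equation, and along an open leaf $f=\sech(t)\int_0^t h(s)\cosh s\,ds$ — observing that a $C^1$-small perturbation preserves the contraction/expansion exponents that make these primitives exist and stay bounded in $h$, so $d^{\lambda_t}_{\F_t}$ stays surjective and $\Omega^\bullet(\F_t)$ stays acyclic. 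Hence $[\dot\beta_t]=0$ throughout, $\F_t$ is an isotopy of foliations, and we are done.

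The hard part will be this uniformity step: upgrading the pointwise statement $H^1_{tw}(\F)=0$ to acyclicity of the twisted complex for all foliations in a $C^1$-neighborhood of $\F$, with estimates strong enough that \autoref{thm:0order} applies along the entire path — that is, passing from the infinitesimal rigidity of \autoref{cor:infini} to genuine local rigidity. The clean way to do this is to make the dependence of the solution operators on $\F_t$ (through the holonomy of the two closed leaves and the exponential weights along the open leaves) quantitative and continuous in $t$. A secondary, more bookkeeping-type difficulty is the construction of the connecting path $\F_t$ through honest foliations, which is exactly where the structural stability of $\F$ — hyperbolicity of its two closed leaves — is used.
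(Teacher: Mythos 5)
The paper offers no argument here beyond the single sentence ``follows immediately from \autoref{cor:infini}'': it passes directly from the vanishing of $H^1_{tw}(\F)$ (infinitesimal rigidity modulo gauge) to genuine local rigidity. Your proposal is therefore necessarily a different and more substantial route, and it is the right mechanism: reduce to gauge equivalence, connect $\F'$ to $\F$ by a path of foliations, and feed the path into the Moser-type criterion of \autoref{thm:0order}, using acyclicity of the twisted complex to produce the contact Hamiltonians. One genuine simplification you missed: on a surface the Frobenius condition $\lambda_t\wedge d\lambda_t=0$ is vacuous, since it is a $3$-form on $T^2$. So after normalizing $\lambda'(\p_y)=\lambda(\p_y)=1$, the naive interpolation $\lambda_t=\lambda+t\,\overline{s}$ already consists of nonvanishing $1$-forms and hence defines an honest path of foliations; the entire structural-stability digression (locating the two perturbed closed leaves, isotoping them back to $\{y=0\}$ and $\{y=\pi\}$, interpolating first-return maps on the annuli) is unnecessary for producing the path. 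The hyperbolicity of the holonomies $e^{\mp 2\pi}$ is still the reason the second step works, but you do not need it to build $\F_t$.

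The step you correctly flag as hard is the real content, and it remains open in your write-up exactly as it does in the paper's: \autoref{thm:0order} requires $[\dot\beta_t]=0\in H^1_{tw}(\F_t)$ for \emph{all} $t$, with primitives $f_t$ that are smooth on $T^2$ and depend smoothly enough on $t$ to generate a flow, whereas the Example only computes $H^1_{tw}(\F_0)=0$. Your plan of rerunning the explicit ODE solutions for $\F_t$ (periodic solution near each closed leaf using that the perturbed holonomy stays bounded away from $1$, exponentially weighted integrating factors along the open leaves) is the right idea, but note two points that need care: the leafwise primitives must assemble into a single function that is smooth \emph{transversally}, i.e.\ across the closed leaves where infinitely many spiral leaves accumulate, and the resulting solution operator must be bounded uniformly in $t$. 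This uniformity is precisely the infinitesimal-to-local gap that the paper's one-line proof also glosses over (and which, for general foliations, is the reason Hamilton's Nash--Moser machinery is invoked in the literature the paper cites). So: right strategy, one unnecessary detour, and the same analytic step left unproved as in the source.
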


The above example shows that $H^\bullet_{tw}(\F)$ is general very different from $H^\bullet(\F)$, but they do coincide if $\F$ is defined by a closed one-form. The general properties of the twisted tangential cohomology are largely unknown at this moment.


\bibliography{mybib}
\bibliographystyle{amsalpha}

\end{document}